\theoremstyle{theorem} 
\newtheorem{thm}{Theorem}[section]
\newtheorem{lemma}[thm]{Lemma}
\newtheorem{prop}[thm]{Proposition}
\newtheorem{assumption}[thm]{Assumption}
\theoremstyle{definition}   
\newtheorem{defn}[thm]{Definition}
\theoremstyle{remark}  
\newtheorem{remark}[thm]{Remark}
\newtheorem{example}[thm]{Example}
\newcommand\bK{\mathbb{K}}
\newcommand\bL{\mathbb{L}}
\newcommand\bR{\mathbb{R}}
\newcommand\bH{\mathbb{H}}
\newcommand\bZ{\mathbb{Z}}
\newcommand\bD{\mathbb{D}}
\newcommand\bS{\mathbb{S}}
\newcommand\bE{\mathbb{E}}
\newcommand\bN{\mathbb{N}}
\newcommand\bP{\mathbb{P}}
\newcommand\bU{\mathbb{U}}
\newcommand\tbf{\mathbf{f}}
\newcommand\cA{\mathcal{A}}
\newcommand\cC{\mathcal{C}}
\newcommand\cD{\mathcal{D}}
\newcommand\cF{\mathcal{F}}
\newcommand\cH{\mathcal{H}}
\newcommand\cK{\mathcal{K}}
\newcommand\cL{\mathcal{L}}
\newcommand\cP{\mathcal{P}}
\newcommand\cR{\mathcal{R}}
\newcommand\cM{\mathcal{M}}
\newcommand\cT{\mathcal{T}}
\newcommand\cO{\mathcal{O}}
\newcommand\rF{\mathscr{F}}
\newcommand{\domain}{\mathcal{O}}
\newcommand{\mysection}[1]{\section{#1}
\setcounter{equation}{0}}
\begin{document}

\title[Parabolic equations on conic domains]{Sobolev space theory  and H\"older estimates for the stochastic partial  differential equations  on conic and polygonal domains}
 
\thanks{The first and  third authors were  supported by the National Research Foundation of Korea(NRF) grant funded by the Korea government(MSIT) (No. NRF-2020R1A2C1A01003354)} 
\thanks{The second author was  supported by the National Research Foundation of Korea(NRF) grant funded by the Korea government(MSIT) (No. NRF-2019R1F1A1058988)}

\author{Kyeong-Hun Kim}
\address{Kyeong-Hun Kim, Department of Mathematics, Korea University, Anam-ro 145, Sungbuk-gu, Seoul, 02841, Republic of Korea}
\email{kyeonghun@korea.ac.kr}

\author{Kijung Lee}
\address{Kijung Lee, Department of Mathematics, Ajou University, Worldcup-ro 206, Yeongtong-gu, Suwon, 16499, Republic of Korea}
\email{kijung@ajou.ac.kr}

\author{jinsol Seo}
\address{Jinsol Seo, Department of Mathematics, Korea University, Anam-ro 145, Sungbuk-gu, Seoul, 02841, Republic of Korea}
\email{seo9401@korea.ac.kr}

\subjclass[2010]{60H15; 35R60, 35R05}

\keywords{parabolic equation, conic domains, weighted Sobolev regularity, mixed weight}

\begin{abstract}
We establish existence, uniqueness, and  Sobolev and H\"older regularity results for the stochastic partial differential equation 
\begin{equation*}
\begin{aligned}
du=\Big(\sum_{i,j=1}^d a^{ij}u_{x^ix^j}&+f^0+\sum_{i=1}^d f^i_{x^i}\Big)dt\\
&+\sum_{k=1}^{\infty}g^kdw^k_t, \quad t>0, \,x\in \cD
\end{aligned}
\end{equation*}
given with non-zero initial data. Here $\{w^k_t: k=1,2,\cdots\}$ is a family of independent Wiener processes defined on a probability space $(\Omega, \bP)$, $a^{ij}=a^{ij}(\omega,t)$ are merely  measurable functions on $\Omega\times (0,\infty)$, and  $\cD$ is either a polygonal domain in $\bR^2$ or an arbitrary dimensional conic domain of the type
\begin{equation}
\label{conic}
\cD(\cM):=\left\{x\in \bR^d :\,\frac{x}{\lvert x\rvert}\in \cM\right\}, \quad \quad \cM\subsetneq S^{d-1},  \quad (d\geq 2)
\end{equation}
where $\cM$ is an open subset of $S^{d-1}$ with $C^2$ boundary. 
We measure the Sobolev and H\"older  regularities of arbitrary  order derivatives of the solution  using a system of mixed weights consisting of appropriate powers of the distance to the vertices and of the distance to the boundary. 
The ranges of admissible powers of the distance to the vertices and  to the boundary are sharp.
\end{abstract}

\maketitle

\mysection{Introduction}\label{sec:Introduction}
The goal of this article is to present a  Sobolev space theory and  H\"older regularity results for  the stochastic partial differential equation (SPDE)
\begin{align}\label{main equation in introduction}
d u =\left(\sum_{i,j}^d a^{ij}u_{x^ix^j}+f^0+\sum_{i=1}^d f^i_{x^i}\right)dt +\sum^{\infty}_{k=1} g^kdw_t^k, \,\,\, t>0\,; \,\,\, u(0,\cdot)=u_0
\end{align}
defined on either  multi-dimensional conic domains $\cD(\cM)$ (see \eqref{conic}) or  two dimensional polygonal domains.   Here, $\cM$ is an open subset of $S^{d-1}$ with $\cC^2$ boundary, $\{w^k_t: k=1,2,\cdots\}$ is an infinite sequence of independent one dimensional Wiener processes, and the coefficients $a^{ij}$ are  merely measurable  functions of $(\omega,t)$ with the uniform parabolicity condition; see Assumption \ref{ass coeff} below. 

To give the reader a flavor of our results in this article we state a particular one, an estimate, below:  Let $\cD=\cD(\cM)$ be a conic domain in $\bR^d$, $\rho(x):=dist(x,\partial \cD)$, and $\rho_{\circ}(x):=\lvert x\rvert$. Then for the solution $u$ of \eqref{main equation in introduction} with zero boundary and zero initial conditions, the following holds for any $p\geq 2$:
\begin{align}\label{main estimate simple}
&\bE \int^T_0 \int_{\cD} \left(\lvert\rho^{-1}u\rvert^p+ \lvert u_x\rvert^p\right) \rho_{\circ}^{\theta-\Theta}\rho^{\Theta-d}\, dx\,dt \nonumber\\
\leq\quad & C\,\bE \int^T_0 \int_{\cD} \Big( \lvert \rho f^0\rvert^p+\sum_{i=1}^d\lvert f^i\rvert^p +\lvert g\rvert_{l_2}^p\Big) \rho_{\circ}^{\theta-\Theta}\rho^{\Theta-d}\, dx\,dt
\end{align}
with $d-1<\Theta<d-1+p$ accompanied with the sharp admissible  range of $\theta$; see \eqref{theta con intro} below.  Also see \eqref{main estimate intro} for  higher order derivative estimates. Unlike the range of $\Theta$, the range of $\theta$ is affected by the shape of domain $\cD$, which is determined by $\cM$. 
 Estimate \eqref{main estimate simple}, if $\rho_{\circ}$ is replaced by the distance to the set of vertices, also holds when  $\cD$ is a (bounded) polygonal domain in $\bR^2$.  Regarding H\"older regularity, we have for instance, if $1-\frac{d}{p}=\delta>0$, 
$$
\lvert \rho^{-1+\frac{\Theta}{p}} \rho^{(\theta-\Theta)/p}_{\circ}u(\omega,t,\cdot)\rvert_{ \cC(\cD)}+
 [\rho^{-1+\delta+\frac{\Theta}{p}} \rho^{(\theta-\Theta)/p}_{\circ} u( \omega,t,\cdot)]_{\cC^{\delta}(\cD)}<\infty, 
$$
for a.e. $(\omega,t)$. In particular, 
\begin{align}
 \lvert u(\omega,t,x)\rvert\leq C(\omega,t) \rho^{1-\frac{\Theta}{p}}(x) \rho^{(-\theta+\Theta)/p}_{\circ}(x)\quad \text{for all }x\in\cD \label{Holder simple}.
\end{align}
Estimate \eqref{Holder simple} shows how $\theta$ and $\Theta$ are involved in measuring the boundary behavior of the solution with respect to $\rho$ and $\rho_{\circ}$. See Theorem \ref{cor 8.10} and Theorem \ref{cor 8.23} for the full H\"older regularity results with respect to both space and time variables.

To position our results in the context of regularity theory of stochastic parabolic equations, let us provide a stream of historical remarks. 

The $L_p$-theory ($p\geq 2$) of   equation \eqref{main equation in introduction}  defined on the entire space $\bR^d$ was first introduced by N.V. Krylov \cite{Krylov 1999-4, Krylov 1996}.  In these articles the author used  an analytic approach and proved the maximal regularity estimate
\begin{align}
 \label{krylov lp}
\|u_x\|_{\bL_p(T)}\leq C\Big(\|f^0\|_{\bL_p(T)}+\sum_{i=1}^d \|f^i\|_{\bL_p(T)}
+\||g|_{\ell_2}\|_{\bL_p(T)}\Big), \qquad p\geq 2,
\end{align}
provided that $u(0,\cdot)\equiv 0$, where $\bL_p(T):=L_p(\Omega\times (0,T); L_p(\bR^d))$.

As for other approaches on Sobolev regularity theory,  the method based  on $H^{\infty}$-calculus is also available in the literature. This approach  was introduced in \cite{Veraar},  in which  the maximal regularity of $\sqrt{-A}u$ is obtained  for the stochastic convolution 
$$
u(t):=\int^t_0 e^{(t-s)A} g(s) dW_H (s).
$$
 Here,  $W_H(t)$ is a cylindrical Brownian motion on a Hilbert space $H$, and  the operator $-A$ is assumed to admit a bounded $H^{\infty}$-calculus of angle less than $\pi/2$ on $L^q(\cO)$, where $q\geq 2$ and $\cO$ is a domain in $\bR^d$.  The result of \cite{Veraar}  generalizes \eqref{krylov lp} with $f^i=0$, $i=1,\ldots,d$ as one can take $A=\Delta$ and $\cO=\bR^d$. 
 
 One advantage of the approach based on $H^{\infty}$-calculus is that it provides a unified way of handling a class of differential operators satisfying the above mentioned condition. However this approach is not applicable  for SPDEs with operators depending on $(\omega,t)$, and even  the simplest  case $A=\Delta$, it is needed that $\partial \cO$ is regular enough, that is $\partial \cO \in \cC^2$. Compared to the approach based on $H^{\infty}$-calculus, Krylov's analytic approach works well for SPDEs with operators depending also on $(\omega,t)$, and it also provides the arbitrary order regularity of solutions without much extra efforts even  under  weaker smoothness condition on  domains.

Since the work of \cite{Krylov 1999-4, Krylov 1996} on $\bR^d$,  the analytic approach has been further used for the regularity theory of SPDEs  on half space \cite{Krylov 1999-2, Krylov 1999-22, KK2004-2} and on $\cC^1$-domains \cite{KK2004, Kim2004, Kim2004-2}. 
The major obstacle of studying SPDEs on domains is that, unless certain compatibility conditions (cf. \cite{Flandoli}) are fulfilled, the second and higher order derivatives of solutions to SPDEs blow up near the boundary, and such blow-ups are inevitable even on $\cC^{\infty}$-domains. Hence, one needs appropriate weight system to understand the behavior of  solutions  near the boundary. 

 It is shown  in \cite{Krylov 1999-2, KK2004, Kim2004} that if domains satisfy $\cC^1$ boundary condition, then blow-ups of derivatives of solutions can be described very accurately by a  weight system introduced in \cite{Krylov 1999-1, KK2004, Lo1}.
This weight system is  based solely on the distance to the boundary. Surprisingly enough, under this weight system it is  irrelevant  whether domains have $\cC^{\infty}$-boundary or $\cC^1$-boundary, that is, the regularity of solutions is not affected by the smoothness of  the boundary provided that the boundary is at least of class $\cC^1$. To be more specific, let $\cO$ be a $\cC^1$-domain, $\rho(x)=dist (x,\partial \cO)$, then  it   holds that (see \cite{Kim2004, KK2004})  for  any $d-1<\Theta<d-1+p$,
  \begin{align}
  \label{eqn 9.3.1}
&\bE \int^T_0 \int_{\cO}(\vert \rho^{-1}u\vert + \vert u_x\vert )^p \rho^{\Theta-d}dt \nonumber\\
\leq\,& C\bE \int^T_0 \int_{\cO}\big( \vert \rho f^0\vert ^p+\sum_{i=1}^d \vert f^i\vert ^p+\vert g\vert ^p_{\ell_2} \big)^p \rho^{\Theta-d}dt.
 \end{align}  
  The condition $\Theta \in (d-1, d-1+p)$  is sharp and is not affected by further smoothness of  $\partial \cO$  as long as  $\partial \cO\in \cC^1$. Note that estimate  \eqref{eqn 9.3.1} with smaller $\Theta$ gives better decay of solutions near the boundary than that with larger $\Theta$. In particular, we  have $u(\omega,t,\cdot)\in W^{1,p}_{0}(\cO)$ from \eqref{eqn 9.3.1} if $\Theta\leq d$.
   
  As for results on non-smooth domains, that is  $\partial \cO \not\in \cC^1$,   very few fragmentary results are known.  It turns out  that \eqref{eqn 9.3.1} holds true  on general Lipschitz domains   if $\Theta \approx d-2+p$ (see \cite{Kim2014}), and hence  the case $\Theta=d$ is not included in general if $p>2$.    An example in \cite{Kim2014} also shows that if  $\Theta<p/2$, then estimate \eqref{eqn 9.3.1} fails  to hold even on  simple wedge domains of the type 
\begin{equation}\label{angular domain1}
\cD^{(\kappa)}=\big\{(r\cos \eta, r\sin \eta)\in \bR^2: r>0, \eta\in (-\kappa/2, \kappa/2)\big\}, \quad \kappa < 2\pi.
\end{equation}
The vertex $0$ makes the boundary non-smooth and changes the game.

Our interest on conic and polygonal domains arises from such question which, in particular, ask if estimates similar to \eqref{eqn 9.3.1} hold on such simple Lipschitz domains.  We got the clue of the problem from 
  a PDE result on conic domains \cite{Kozlov Nazarov 2014} (also see \cite{Na, Sol2001})  which is similar to \eqref{eqn 9.3.1}, without the term $g=(g^1, g^2,\cdots)$ of course. 
  It uses the weight based only on the distance to the vertex. 
 A work  on SPDE using a weight system based only on the distance to the vertex is introduced in \cite{CKLL 2018} 
(also see \cite{CKL 2019+}), in which we studied the model case of  $d=2$ and $a^{ij}=\delta_{ij}$ for a starter of the program.

Even for the model case considered in \cite{ CKL 2019+,CKLL 2018}  we struggled to have higher order derivative estimate and left the problem  as the future work. The main issue is to include the distance to the boundary  in our weight system to have a satisfactory regularity relation between solutions and the inputs. In fact, there was an omen of aforementioned difficulty that is implied in the Green's function estimate used in  \cite{CKLL 2018}  and \cite{CKL 2019+}. The estimate dominating Green's function does not vanish at the boundary although it does at the vertex. We need more refined Green's function estimate for the starter of a satisfactory regularity result.

We then set a program of three steps: (i) preparing a refined $d$-dimensional Green's function estimate for  operators with measurable coefficients  (ii) preparing PDE result (iii) establishing SPDE result addressing the higher order derivative estimates. First two steps are done in \cite{Green} and \cite{ConicPDE},  and this article fulfills the last step.  In \cite{Green} the refined Green's function estimate involves both the distance to the vertex and the distance to the boundary and  it now vanishes at all the points on the boundary with informative decay rate near the boundary. The work \cite{ConicPDE} fully makes use of what we prepared in \cite{Green} and it is designed to serve this article well.

Now let us explain our $L_p$-regularity result  in more detail. Recall
$
\rho_{\circ}(x):=\vert x\vert  \quad  \text{and} \quad  \rho(x):=d(x,\partial \cD),
$
which denote the distance from $x$ to vertex and to the boundary of the conic domain $\cD=\cD(\cM)$, respectively.  We prove that for any $p\ge 2$ and $n=0,1,2,\cdots$, the estimate   
\begin{align}
&\bE \int^T_0 \int_{\cD} \left(\vert \rho^{-1}u\vert ^p+\vert u_x\vert ^p+\cdots+ \vert \rho^{n}D^{n+1}u\vert ^p\right) \rho_{\circ}^{\theta-\Theta}\rho^{\Theta-d}\, dx\,dt \nonumber
 \\
\leq& C \bE
  \int^T_0 \int_{\cD} \Big( \vert \rho f^0\vert ^p+\cdots+\vert \rho^{n+1}D^nf^0\vert ^p \nonumber 
  \\
  &\quad \quad \quad \quad \quad\,\,\,\, +\sum^d_{i=1}\vert f^i\vert ^p+\cdots+\sum_{i=1}^d\vert \rho^{n}D^nf^i\vert ^p \nonumber \\
  &\quad \quad \quad \quad \quad\,\,\,\, +\vert g\vert _{\ell_2}^p+\cdots+\vert \rho^{n}D^{n}g\vert _{\ell_2}^p\Big)
 \rho_{\circ}^{\theta-\Theta}\rho^{\Theta-d}\, dx\,dt  \label{main estimate intro}
\end{align}
holds for the solution $u=u(\omega,t,x)$ to equation \eqref{main equation in introduction} with zero initial condition, provided  that
  \begin{equation}
     \label{theta con intro}         
d-1<\Theta<d-1+p, \quad\,\,  p(1-\lambda^+_c)<\theta<p(d-1+\lambda^-_c).
 \end{equation}
  Here, $\lambda^+_c$ and $\lambda^-_c$ are positive constants  which depend on  $\cM$   and are defined in   Definition \ref{lambda} below (also see Proposition \ref{critical exponents} and Remark \ref{example proposition}). The same estimate holds for polygonal domains in $\bR^2$.  Estimate \eqref{main estimate intro} with  condition \eqref{theta con intro} is indeed an (seamless) extension of \cite{ConicPDE} to SPDEs, and what is satisfactory is that the ranges of  $\Theta$ and $\theta$ in \eqref{theta con intro} are not shrunken smaller than the ranges for the deterministic parabolic equation. For this however very delicate computation is required and providing the work done successfully is one of main purposes of this article.

Finally, we want to summarize the improvement in this article over the results in \cite{CKLL 2018}  and \cite{CKL 2019+}. Our domains $\cD(\cM)$ in $\bR^d$, $d\ge 2$, generalize two dimensional angular domains \eqref{angular domain1}; the choice of $\cM$ is much richer when $d>2$. Our operator $\sum_{i,j}a^{ij}(\omega,t)D_{ij}$ far generalizes Laplacian operator $\Delta$ in \cite{CKLL 2018}  and \cite{CKL 2019+}. These generalizations make computation much more involved, especially, for the stochastic part of the solution. Also, thanks to the mixed weight system, we can now study the higher order derivatives in an appropriate manner and implementing it requires quite a work.  Moreover, in this article we do not pose zero initial condition and hence we propose right function spaces for the initial condition in terms of regularity relations between inputs and output, where the initial condition is one of inputs. This result is new  even  for deterministic PDEs on conic domains. H\"older regularity results based on aforementioned improvements are also new even for PDEs on conic domains.

This article is organized as follows. In Section 2 we introduce some properties of weighted Sobolev spaces and  present our main results on conic domains, including H\"older regularity results.  In Section 3 we estimate weighed $L_p$ norm of the zero-th order derivative of the solution  on conic domains based on the solution representation via Green's function and elementary but highly involved computations. The estimates of the derivatives of the solution on conic domains are obtained in Section 4 and the proof of the main results on conic domains are posed there, too. In section 5 we establish a regularity theory on polygonal domains in $\bR^2$.
\vspace{2mm}

\noindent\textbf{Notations.}
\begin{itemize}

\item We use $:=$ to denote a definition.

\item  For a measure space $(A, \cA, \mu)$, a Banach space $B$ and $p\in[1,\infty)$, we write $L_p(A,\cA, \mu;B)$ for the collection of all $B$-valued $\bar{\cA}$-measurable functions $f$ such that
$$
\|f\|^p_{L_p(A,\cA,\mu;B)}:=\int_{A} \lVert f\rVert^p_{B} \,d\mu<\infty.
$$
Here, $\bar{\cA}$ is the completion of $\cA$ with respect to $\mu$. We will drop $\cA$ or $\mu$ or even $B$ in $L_p(A,\cA, \mu;B)$   when they   are obvious from the context. 

\item $\bR^d$ stands for the $d$-dimensional Euclidean space of points $x=(x^1,\cdots,x^d)$, $B_r(x):=\{y\in \bR^d: \vert x-y\vert <r\}$, 
 $\bR^d_+:=\{x=(x^1,\ldots,x^d): x^1>0\}$, and $S^{d-1}:=\{x\in \bR^d: \vert x\vert =1\}$.
 
 \item For  a domain $\domain \subset \bR^d$, $B^{\domain}_R(x):=B_R(x)\cap \domain$ and  $Q^{\domain}_R(t,x):=(t-R^2,t]\times B^{\domain}_R(x)$.
 
 \item  $\bN$ denotes the natural number system, $\bN_0=\{0\}\cup \bN$,  and   $\bZ$ denotes the set of integers.

\item For $x$, $y$ in $\bR^d$,  $x\cdot y :=\sum^d_{i=1}x^iy^i$ denotes the standard inner product.

\item For a domain $\domain$ in $\bR^d$, $\partial \domain$ denotes the boundary of $\domain$.

\item  For  any multi-index $\alpha=(\alpha_1,\ldots,\alpha_d)$, $\alpha_i\in \{0\}\cup \bN$,   
$$
f_t=\frac{\partial f}{\partial t}, \quad f_{x^i}=D_if:=\frac{\partial f}{\partial x^i}, \quad D^{\alpha}f(x):=D^{\alpha_d}_d\cdots D^{\alpha_1}_1f(x).
$$
 We denote $\vert \alpha\vert :=\sum_{i=1}^d \alpha_i$.  For the second order derivatives we denote $D_jD_if$ by $D_{ij}f$. We often use the notation 
$\vert gf_x\vert ^p$ for $\vert g\vert ^p\sum_i\vert D_if\vert ^p$ and $\vert gf_{xx}\vert ^p$ for $\vert g\vert ^p\sum_{i,j}\vert D_{ij}f\vert ^p$.  We also use $D^m f$ to denote arbitrary partial derivatives  of order $m$ with respect to the space variable.

\item $\Delta_x f:=\sum_i D_{ii}f$, the Laplacian for $f$.

\item For $n\in \{0\}\cup \bN$,  $W^n_p(\domain):=\{f: \sum_{\vert \alpha\vert \le n}\int_{\domain}\vert D^{\alpha}f\vert ^p dx<\infty\}$, the Sobolev space.

\item  For a domain $\domain\subseteq\bR^d$ and a Banach space $X$ with the norm $\vert \cdot\vert _X$, $\cC(\domain;X)$ denotes the set of $X$-valued continuous functions $f$ in $\domain$ such that $\vert f\vert _{\cC(\domain;X)}:=\sup_{x\in\domain}\vert f(x)\vert _X<\infty$. Also, for $\alpha\in (0,1]$, we define the H\"older space
 $\cC^{\alpha}(\domain;X)$ as the set of all $X$-valued functions $f$ such that 
$$
\vert f\vert _{\cC^{\alpha}(\domain;X)}:=\vert f\vert _{\cC(\domain;X)}+[f]_{\cC^{\alpha}(\domain;X)}<\infty
$$
with the semi-norm $[f]_{\cC^{\alpha}(\domain;X)}$ defined by
$$
[f]_{\cC^{\alpha}(\domain;X)}=\sup_{x\neq y\in \domain} \frac{\vert f(x)-f(y)\vert _X}{\vert x-y\vert ^{\alpha}}.
$$
In particular, $\domain$ can be an interval in $\bR$.

\item  For a domain $\domain\subseteq\bR^d$, $\cC^{\infty}_c(\domain)$ is the the space of infinitely differentiable functions with compact support in $\domain$. $supp(f)$ denotes the support of the function $f$. Also, $\cC^{\infty}(\domain)$ denotes the the space of infinitely differentiable functions in $\domain$.

\item  For a distribution $f$ on  $\domain$ and  $\varphi\in \cC^{\infty}_c(\domain)$,  the expression $(f,\varphi)$ denote the evaluation of $f$ with the test function $\varphi$.


\item For functions $f=f(\omega,t,x)$ depending on $\omega\in\Omega$, $t\geq 0$ and $x\in\bR^d$, we usually drop the argument $\omega$  and  just write $f(t,x)$ when there is no confusion.

\item  Throughout the article, the letter $C$ denotes a finite positive constant which may have different values along the argument  while the dependence  will be informed;  $C=C(a,b,\cdots)$, meaning that  $C$ depends only on the parameters inside the parentheses.

\item  $A\sim B$ means that there exist constants $C_1, C_2>0$ independent of $A$ and $B$ such that  $A\leq C_1B \leq C_2A$. 

\item $d(x,\domain)$ stands for the distance between a point $x$ and a set $\domain\in\bR^d$.

\item $a \vee b =\max\{a,b\}$, $a \wedge b =\min\{a,b\}$. 

\item $1_U$ the indicator function on $U$.


\item We will use the following sets of functions (see \cite{Kozlov Nazarov 2014}).
\begin{itemize}
\item[-]
$\mathcal{V}(Q^{\mathcal{\domain}}_R(t_0,x_0))$ : the set of functions $u$ defined at least on $Q^{\mathcal{\domain}}_R(t_0,x_0)$ and satisfying
\begin{equation*}
\sup_{t\in(t_0-R^2,t_0]}\|u(t,\cdot)\|_{L_2(B^{\domain}_{R}(x_0))} +\|\nabla u\|_{L_2(Q^{\domain}_{R}(t_0,x_0))}<\infty.\nonumber
\end{equation*}
\item[-] 
$\mathcal{V}_{loc}(Q^{\domain}_R(t_0,x_0))$ : the set of functions $u$ defined at least on $Q^{\domain}_R(t_0,x_0)$ and satisfying
\begin{equation*}
u\in \mathcal{V}(Q^{\domain}_r(t_0,x_0)),  \quad \forall r\in (0,R).\nonumber
\end{equation*}
\end{itemize}

\end{itemize}

\mysection{SPDE on $d$-dimensional conic domains}\label{sec:Cone}

Throughout this article we assume $d\ge 2$. Let $\cM$ be a nonempty open set in $S^{d-1}:=\left\{x\in \bR^d\,:\,\vert x\vert =1\right\}$ and  $\overline{\cM}$ denotes the closure of $\cM$.  We assume $\overline{\cM}\neq S^{d-1}$,
and  define the $d$-dimensional conic domain $\mathcal{D}$ by 
$$
\mathcal{D}=\cD(\cM):=\Big\{x\in\mathds{R}^d\setminus\{0\} \ \Big\vert  \ \ \frac{x}{\vert x\vert}\in \mathcal{M} \Big\}.
$$
 When $d=2$, the shapes of conic domains are quite simple.  For instance, with a fixed angle $\kappa$ in the range of $\left(0,2\pi\right)$ we can consider 
\begin{equation}\label{wedge in 2d}
\mathcal{D}=\mathcal{D}^{(\kappa)}:=\left\{(r\cos\eta,\ r\sin\eta)\in\mathds{R}^2 \mid r\in(0,\ \infty),\ -\frac{\kappa}{2}<\eta<\frac{\kappa}{2}\right\}.
\end{equation}

\begin{figure}[ht]
\begin{tikzpicture}[> = Latex]
\begin{scope}
\clip (-2.4,-2.4)--(2.4,-2.4)--(2.4,2.4)--(-2.4,2.4);

\draw[->] (-2.4,0) -- (2.4,0);
\draw[->] (0,-2.4) -- (0,2.4);
\draw (0,0) -- (-2.3,1.725);
\draw (0,0) -- (-2.3,-1.725);
\draw[->] (0.5,0) arc(0:{180-atan(3/4)}:0.5);
\draw[<-] ({-180+atan(3/4)}:0.5) arc({-180+atan(3/4)}:0:0.5);
\begin{scope}
\clip (0,0)--(-2.3,-1.725)--(-2.2,-2.2)--(2.2,-2.2)--(2.2,2.2)--(-2.2,2.2)--(-2.3,1.725)--(0,0);
\foreach \i in {-4,-3.67,...,3}
{\draw (\i,-2.8)--(\i+1,2.8);}
\path[fill=white] (0.60,0.3) rectangle (1.05,1.1);
\path (0.85,0.35) node[above] {{\Large $\frac{\kappa}{2}$}};
\path[fill=white] (0.45,-0.3) rectangle (1.05,-1.1);
\path (0.75,-0.35) node[below] {{\small $-$}{\Large $\frac{\kappa}{2}$}};
\draw[fill=black] (0.5,0) circle (0.5mm);
\end{scope}
\end{scope}
\path (0,-3) node {$d=2$};
\end{tikzpicture}
\begin{tikzpicture}[> = Latex]

\begin{scope}

\begin{scope}[scale=0.8]
\begin{scope}[scale=0.9]
\clip (0,0) circle (3.6) ;
\draw (0,0) circle (1.7);
\draw (1.7,0) arc(0:-180:1.7 and 0.6);
\draw[dashed] (1.7,0) arc(0:180:1.7 and 0.6);
\clip (-3,-3) -- (3,-3) -- (3,3) -- (-3,3);
\draw (1.1,0.55) -- (6,3);
\draw (0.3,1.2) -- (1.5,6);
\draw[dashed] (0,0) -- (1.1,0.55);
\draw[dashed] (0,0) -- (1.1,1.1);
\draw[dashed] (0,0) -- (0.7,1.3);
\draw[dashed] (0,0) -- (0.3,1.2);
\draw (1.1,1.1) -- (3.3,3.3);
\draw (0.7,1.3) -- (2.1,3.9);
\end{scope}

\begin{scope}[scale=0.45]
\draw[fill=white][dashed] (1.1,0.55) .. controls (1.4,0.7) and (1.2,1) .. (1.1,1.1) .. controls (1.0,1.2) and (0.8,1.25) .. (0.7,1.3) .. controls (0.5,1.4) and (0.33,1.32) .. (0.3,1.2) .. controls (0.23,0.92) and (0.5,0.85) .. (0.6,0.7) .. controls (0.7,0.55) and (1,0.5) .. (1.1,0.55);
\end{scope}

\draw[dashed] (0,0) -- (0.6,0.7);

\begin{scope}[scale=0.9]
\draw[fill=white] (1.1,0.55) .. controls (1.4,0.7) and (1.2,1) .. (1.1,1.1) .. controls (1.0,1.2) and (0.8,1.25) .. (0.7,1.3) .. controls (0.5,1.4) and (0.33,1.32) .. (0.3,1.2) .. controls (0.23,0.92) and (0.5,0.85) .. (0.6,0.7) .. controls (0.7,0.55) and (1,0.5) .. (1.1,0.55);
\end{scope}

\begin{scope}[scale=1.8]
\draw[fill=white] (1.1,0.55) .. controls (1.4,0.7) and (1.2,1) .. (1.1,1.1) .. controls (1.0,1.2) and (0.8,1.25) .. (0.7,1.3) .. controls (0.5,1.4) and (0.33,1.32) .. (0.3,1.2) .. controls (0.23,0.92) and (0.5,0.85) .. (0.6,0.7) .. controls (0.7,0.55) and (1,0.5) .. (1.1,0.55);
\end{scope}

\draw (1.5,1.75) -- (0.6,0.7);

\begin{scope}[scale=0.9]

\clip (1.1,0.55) .. controls (1.4,0.7) and (1.2,1) .. (1.1,1.1) .. controls (1.0,1.2) and (0.8,1.25) .. (0.7,1.3) .. controls (0.5,1.4) and (0.33,1.32) .. (0.3,1.2) .. controls (0.23,0.92) and (0.5,0.85) .. (0.6,0.7) .. controls (0.7,0.55) and (1,0.5) .. (1.1,0.55);

\draw (0,1.7) arc(90:0:0.54 and 1.95);
\draw (0,1.7) arc(90:0:0.795 and 1.92);
\draw (0,1.7) arc(90:0:1.03 and 1.85);
\draw (0,1.7) arc(90:0:1.23 and 1.787);
\draw (0,1.7) arc(90:0:1.4 and 1.71);

\draw (-0.15,1.14) arc(-90:0:1.7*0.565 and 0.6*0.565);
\draw (-0.15,0.99) arc(-90:0:1.7*0.66 and 0.6*0.66);
\draw (-0.15,0.82) arc(-90:0:1.7*0.745 and 0.6*0.745);
\draw (-0.15,0.645) arc(-90:0:1.7*0.82 and 0.6*0.82);
\draw (-0.15,0.46) arc(-90:0:1.7*0.89 and 0.6*0.89);
\end{scope}
\end{scope}
\end{scope}
\path (0,-3) node {$d=3$};
\end{tikzpicture}
\caption{Cases of $d=2$ and $d=3$}
\end{figure}
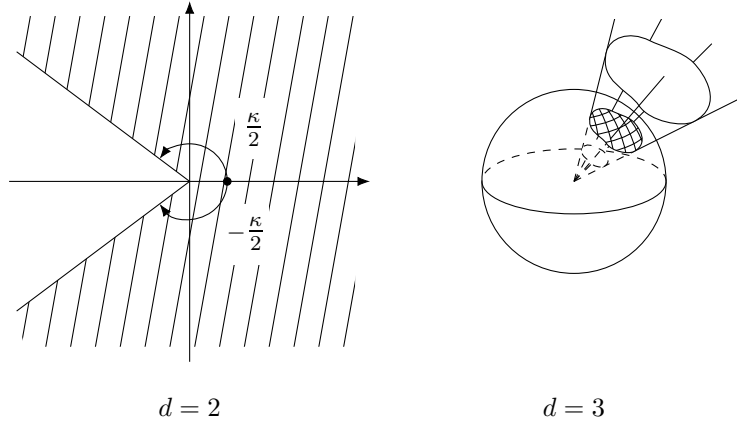


%

Let  $\{w_{t}^{k}\}_{k\in\bN}$ be a family of independent one-dimensional
Wiener processes  defined on a complete probability space $(\Omega,\mathscr{F},\bP)$ equipped with an  increasing filtration of
$\sigma$-fields $\mathscr{F}_{t}\subset\mathscr{F}$, each of which
contains all $(\mathscr{F},\bP)$-null sets.  By $\cP$  we  denote the predictable $\sigma$-field  on $\Omega \times (0,\infty)$ generated by $\mathscr{F}_{t}$.

In this article we study the  regularity theory of  the stochastic  partial differential equation 
\begin{equation}\label{stochastic parabolic equation}
d u =\Big( \cL u+f^0+\sum_{i=1}^d f^i_{x^i}\Big)dt +\sum^{\infty}_{k=1} g^kdw_t^k,\quad   t>0, \;x\in \cD(\cM)
\end{equation}
under the zero Dirichlet boundary condition.  Here
\begin{equation*}
\cL := \sum_{i,j=1}^d a^{ij}(\omega,t) D_{ij}.
\end{equation*}

\begin{itemize}

\item[-] Each of the stochastic integrals  in \eqref{stochastic parabolic equation} is understood as an It\^o stochastic integral against the given Wiener process.  

\item[-] The infinite sum of stochastic integrals is understood as the limit in probability (uniformly in $t$) of the finite sums of stochastic integrals. See Remark \ref{sto series}.

\end{itemize}




 Here are our assumptions on  $\cM$ and the diffusion coefficients. 

\begin{assumption}
\label{ass M}
The boundary $\partial \cM $ of $\cM$  in  $S^{d-1}$ is of class $\cC^2$.
\end{assumption}

\begin{assumption}
  \label{ass coeff}
The diffusion coefficients $a^{ij}$, $i,j=1,\cdots,d$,  are  real-valued $\cP$-measurable  functions of $(\omega,t)$, symmetric; $a^{ij}=a^{ji}$,  and satisfy the uniform parabolicity condition, i.e. there exist  constants $\nu_1, \nu_2>0$ such that for any $t\in\mathds{R}$, $\omega\in \Omega$ and $\xi=(\xi^1,\ldots,\xi^d)\in\mathds{R}^d$,
\begin{equation}
\nu_1 \vert \xi\vert ^2\le \sum_{i,j}a^{ij}(\omega, t)\xi_i\xi_j\le \nu_2 \vert \xi\vert ^2.  \label{uniform parabolicity}
\end{equation}
\end{assumption}

To explain our main result in the frame of weighted Sobolev regularity, we  introduce some  function spaces (c.f. \cite{CKL 2019+, ConicPDE}). These spaces collect the functions  whose weak derivatives can be measured by the help of appropriate weights consisting of powers of the distance to the vertex  and of the distance to the boundary.  Let  us define
$$
\rho_{\circ}(x)=\rho_{\circ,\cD}:=\vert x\vert ,\quad \quad \rho(x)=\rho_{\cD}(x):=d(x,\partial\cD).
$$
For $p\in(1,\infty)$, $\theta\in\bR$ and $\Theta\in \bR$,  we define 
$$
 L_{p,\theta,\Theta}(\cD):=L_p(\cD,\rho_{\circ}^{\theta-\Theta}\rho^{\Theta-d}dx),
$$
and for $m\in \bN_0$ define
$$
K^m_{p,\theta,\Theta}(\cD):=\{f\, :  \rho^{\vert \alpha\vert } D^{\alpha}f\in L_{p,\theta,\Theta}(\cD),  \, \,\vert \alpha\vert \leq m \}.
$$
The norm in  $K^m_{p,\theta,\Theta}(\cD)$ is defined by
\begin{equation} 
\|f\|_{K^m_{p,\theta,\Theta}(\cD)}
=\sum_{\vert \alpha\vert \leq m} \left(\int_{\cD} \vert \rho^{\vert \alpha\vert }D^{\alpha}f\vert ^p  \rho_{\circ}^{\theta-\Theta}\rho^{\Theta-d}\,dx\right)^{1/p}. \label{space K norm}
\end{equation}
The space $K^m_{p,\theta,\Theta}(\cD)$ is related to the weighted Sobolev space  $H^m_{p,\Theta}(\cD)$ introduced in \cite{KK2004, Krylov 1999-1,Lo1}  as follows:
$$
H^m_{p,\Theta}(\cD)=K^m_{p,\Theta,\Theta}(\cD),
$$
whose norm is given by
\begin{equation}
   \label{eqn 8.9.5}
   \|f\|_{H^m_{p,\Theta}(\cD)}:=\sum_{\vert \alpha\vert \leq m} \left(\int_{\cD} \vert \rho^{\vert \alpha\vert }D^{\alpha}f\vert ^p \rho^{\Theta-d}\,dx\right)^{1/p}, \quad m\in \bN_0.
      \end{equation}
      Note that the weight of $H^m_{p,\Theta}(\cD)$ is based only on the distance to the boundary. 
Using the fact that for any $\mu\in \bR$ and multi-index $\alpha$
\begin{equation}
\label{eqn 8.28.4}
\sup_{x\in \cD} \rho^{\vert \alpha\vert -\mu}_{\circ} \vert D^{\alpha} \rho^{\mu}_{\circ}(x)\vert \leq C(\mu,\alpha)<\infty,
\end{equation}
one can easily check 
$$f\in K^m_{p,\theta,\Theta}(\cD) \quad \text{ if and only if} \quad  \rho^{(\theta-\Theta)/p}_{\circ}f\in H^m_{p,\Theta}(\cD),
$$ and the norms in their corresponding spaces are equivalents, that is, 
\begin{equation}
    \label{eqn 8.9.7}
    \|f\|_{K^m_{p,\theta,\Theta}(\cD)}\sim \|\rho^{(\theta-\Theta)/p}_{\circ}f\|_{H^m_{p,\Theta}(\cD)}, \quad n\in \bN_0.
\end{equation}

Below we use 
relation \eqref{eqn 8.9.7} to define $K^{\gamma}_{p,\theta,\Theta}(\cD)$ for all $\gamma \in \bR$. 
Let   $\psi=\psi_{\cD}$  be a smooth function in $\cD$ (see e.g. \cite[Lemma 4.13]{Ku})
such that for any $m\in \bN_0$,
\begin{equation}
\label{eqn 8.9.1}
\psi_{\cD}(x)\sim  \rho_{\cD}(x),\quad \rho^{m}_{\cD}\vert D^{m+1}\psi_{\cD}\vert \leq
N(m)<\infty.
\end{equation}
Actually, such $\psi$ exists on any domains. Indeed, let $\cO$ be an arbitrary domain, and put $\rho_{\cO}(x)=d(x,\partial \cO)$, and 
\begin{equation}
\label{eqn 8.25.1}
\cO_{n,k}:=\{x\in \cO: e^{-n-k}<\rho_{\cO} (x)<e^{-n+k}\}. 
\end{equation} 
Then mollifying $1_{\cO_{n,2}}$ one can easily construct $\xi_n$ such that  
$$
\xi_n \in \cC^{\infty}_c(\cO_{n,3}), \quad \vert D^m \xi_n\vert \leq C(m)e^{mn}, \quad \sum_{n\in \bZ} \xi_n(x) \sim 1,
$$
and then one  can take 
\begin{equation}\label{eqn 8.25.2}
\psi=\psi_{\cO}=\sum_{n\in \bZ} e^{-n}\xi_n(x).
\end{equation}
 It is easy to check that $\psi=\psi_{\cO}$ satisfies \eqref{eqn 8.9.1} with $\rho_{\cO}$  in place of 
 $\rho_{\cD}$.
 
Next we choose a nonnegative function $\zeta\in \cC^{\infty}_{c}(\bR_{+})$ such that  $\zeta>0$ on $[e^{-1},e]$. Then, by the periodicity,
\begin{equation}
                                                       \label{11.4.1}
\sum_{n=-\infty}^{\infty}\zeta(e^{n+t})>c>0,\quad\forall\; t\in\bR.
\end{equation}
 For  $p\in(1,\infty)$ and $\gamma\in \bR$,  by $H^{\gamma}_p=H^{\gamma}_p(\bR^d)$ we  denote the space of Bessel potential with the norm  
$$
\|u\|_{H^{\gamma}_p}:=\|(1-\Delta)^{\gamma/2}u\|_{L_p(\bR^d)}:=\|\cF^{-1}[(1+\vert \xi\vert ^2)^{\gamma/2} \cF(u)(\xi)]\|_{L_p(\bR^d)}.
$$
In case $\gamma\in \bN_0$, $H^{\gamma}_p(\bR^d)$ coincides with $W^{\gamma}_p(\bR^d)$. The spaces of Bessel potentials enjoy the property
$$
\|u\|_{H^{\gamma_1}_p}\le \|u\|_{H^{\gamma_2}_p},\quad  \gamma_1\le \gamma_2.
$$
Especially, we have $\|u\|_{L_p}\le \|u\|_{H^{\gamma}_p}$ for any $\gamma\ge 0$.
For  $\ell_2$-valued functions $g$  we also define
$$
\|g\|_{H^{\gamma}_p(\ell_2)}:=\|\vert (1-\Delta)^{\gamma/2}g\vert _{\ell_2}\|_{L_p(\bR^d)}.
$$
Moreover, for $\bR^d$-valued functions $\tbf=(f^1,\ldots,f^d)$  we define
$$
\|\tbf\|_{H^{\gamma}_p(d)}:=\|\,\vert (1-\Delta)^{\gamma/2}\tbf\vert \,\|_{L_p(\bR^d)}.
$$

 From now on, if a function  defined on  a domain $\cO$ vanishes near the boundary of $\cO$, then by a trivial extension we consider it as a function defined on $\bR^d$. In particular, for any $k\in \bZ$ and a function $f$ on $\cO$, the function $\zeta(e^{-k}\psi_{\cO}(x))f(x)$ has a compact support in $\cO$ and  can be considered as a function on $\bR^d$.
 
\begin{defn}
\label{defn 8.28}
Let $p\in(1,\infty), \Theta, \gamma\in \bR$, and $\cO$ be a domain in $\bR^d$.  By $H^{\gamma}_{p,\theta}(\cO)$ we denote the class of all distributions $f$ on $\cO$ such that 
\begin{equation}
   \label{eqn 8.10.14}
\|f\|^p_{H^{\gamma}_{p,\Theta}(\cO)}:= \sum_{n\in \bZ} e^{n\Theta} \|\zeta(e^{-n}\psi(e^n\cdot))f(e^{n}\cdot)\|^p_{H^{\gamma}_p(\bR^d)}<\infty,
\end{equation}
where $\psi=\psi_{\cO}$ is taken from \eqref{eqn 8.25.2}.  Similarly, $H^{\gamma}_{p,\theta}(\cO;\ell_2)$ is the set of  $\ell_2$-valued functions $g$ such that
\begin{equation*}
\|g\|^p_{H^{\gamma}_{p,\Theta}(\cO;\ell_2)}:= \sum_{n\in \bZ} e^{n\Theta} \|\zeta(e^{-n}\psi(e^n\cdot))g(e^{n}\cdot)\|^p_{H^{\gamma}_p(\bR^d;\ell_2)}<\infty.
\end{equation*}
\end{defn}

It turns out  (see \cite[Proposition 2.2]{Lo1} or \cite[Lemma 4.3]{ConicPDE}) that the new norm in \eqref{eqn 8.10.14} is equivalent to the norm  in  \eqref{eqn 8.9.5} if $\gamma\in \bN_0$. In other words, 
 for $\gamma \in \bN_0$,
\begin{equation}
\label{eqn 8.9.8}
 \sum_{n\in \bZ} e^{n\Theta} \|\zeta(e^{-n}\psi_{\cO} (e^n\cdot))f(e^{n}\cdot)\|^p_{H^{\gamma}_p} \quad \sim \quad  \sum_{\vert \alpha\vert \leq \gamma} \int_{\cO} \vert \rho^{\vert \alpha\vert }D^{\alpha}f\vert ^p \rho^{\Theta-d}\,dx,
\end{equation}
and the equivalence relation depends only on $p,\gamma, \Theta,d, n,\zeta, \psi$ and $\cO$.  

Now we use equivalence relations \eqref{eqn 8.9.7} and \eqref{eqn 8.9.8},  and  define $K^{\gamma}_{p,\theta,\Theta}(\cD)$ for any chosen $\gamma\in \bR$.

\begin{defn}
\label{defn 8.19}
Let $p\in (1,\infty), \theta, \Theta, \gamma \in \bR$, and $\cD$ be a conic domain in $\bR^d$. 
 We write $f\in K^{\gamma}_{p,\theta,\Theta}(\cD)$ if and only if $\rho^{(\theta-\Theta)/p}_{\circ} f\in H^{\gamma}_{p,\Theta}(\cD)$, and define
\begin{equation}
             \label{eqn 8.10.1}
\|f\|_{ K^{\gamma}_{p,\theta,\Theta}(\cD)} := \|\rho^{(\theta-\Theta)/p}_{\circ} f\|_{H^{\gamma}_{p,\Theta}(\cD)}.
\end{equation}
The space $K^{\gamma}_{p,\theta,\Theta}(\cD;\ell_2)$ and its norm are defined similarly.  Also we write 
$\tbf=(f^1,f^2,\cdots,f^d)\in K^{\gamma}_{p,\theta,\Theta}(\cD; \bR^d)$ if 
$$
\|\tbf\|_{K^{\gamma}_{p,\theta,\Theta}(\cD; \bR^d)}:=\sum_{i=1}^d \|f^i\|_{K^{\gamma}_{p,\theta,\Theta}(\cD)}<\infty.
$$
\end{defn}

Note that the new norm of the space $K^{\gamma}_{p,\theta,\Theta}(\cD)$ is equivalent to the previous one if $\gamma\in \bN_0$.
Below we collect some  basic properties of the space $K^{\gamma}_{p,\theta,\Theta}(\cD)$. 

\begin{lemma}\label{property1}
Let $p\in(1,\infty)$ and $\theta, \Theta, \gamma \in\bR$.

(i) For a domain $\cO$ and  $\eta\in \cC^{\infty}_c(\bR_+)$,
\begin{equation}
 \label{eqn 4.24.5}
  \sum_{n \in\bZ}
e^{n\Theta} \|\eta(e^{-n}\psi_{\cO} (e^n\cdot))f(e^{n}\cdot)\|^p_{H^{\gamma}_p} \leq C(p,\Theta,d,\gamma, \eta, \cO) \|f\|_{H^{\gamma}_{p,\Theta}(\cO)}^{p}.
\end{equation}
The reverse inequality  also holds if $\eta$ satisfies \eqref{11.4.1}. 
 Moreover, the same statements hold for $\ell_2$-valued functions.

(ii) $\cC_c^{\infty}(\cD)$ is dense in $K^{\gamma}_{p,\theta,\Theta}(\cD)$.

(iii) For any $\mu \in \bR$,
 \begin{equation}
   \label{eqn 8.19.81}
 \|\psi^{\mu}f\|_{K^{\gamma}_{p,\theta,\Theta}(\cD)}\sim \|f\|_{K^{\gamma}_{p,\theta+\mu p, \Theta+\mu p}(\cD)}, 
 \end{equation}
where $\psi$ satisfies \eqref{eqn 8.9.1}.  The same statement holds for $\ell_2$-valued functions.
  
(iv) (Pointwise multiplier) Let $\gamma\in \bR$, $n\in \bN_0$ with $\vert \gamma\vert \leq n$. If  $\vert a\vert ^{(0)}_n:=\sup_{\cD} \sum_{\vert \alpha\vert \leq \vert n\vert } \rho^{\vert \alpha\vert }\vert D^{\alpha}a\vert <\infty$, then
\begin{equation}
   \label{eqn 8.19.11}
\|af\|_{K^{\gamma}_{p,\theta,\Theta}(\cD)}\leq C(n,p,d)\vert a\vert ^{(0)}_n \|f\|_{K^{\gamma}_{p,\theta,\Theta}(\cD)}.
\end{equation}

(v) The  operator $D_i:K^{\gamma}_{p,\theta,\Theta}(\cD)\to K^{\gamma-1}_{p,\theta+p,\Theta+p}(\cD)$ is  bounded for any $i=1,\ldots,d$.  In genereal,  for any multi-index $\alpha$  we have
\begin{align}
                  \label{eqn 4.16.1}
\|D^{\alpha}f\|_{K^{\gamma-\vert \alpha\vert }_{p,\theta+\vert \alpha\vert p,\Theta+\vert \alpha\vert p}(\cD)}\leq C \|f\|_{K^{\gamma}_{p,\theta,\Theta}(\cD)}.
\end{align}
 The same statement holds for $\ell_2$-valued functions.

 (vi) (Sobolev-H\"older embedding) Let $\gamma-\frac{d}{p}\geq n+\delta$, where $n\in \bN_0$ and $\delta\in (0,1)$. 
 Then for any $f\in K^{\gamma}_{p,\theta-p,\Theta-p}(\cD)$, 
 \begin{eqnarray}
&&\sum_{k\leq n} \vert \rho^{k-1+\frac{\Theta}{p}} \rho^{(\theta-\Theta)/p}_{\circ} D^{k}f\vert _{\cC(\cD)}  \nonumber \\
&&\quad + [\rho^{n-1+\delta+\frac{\Theta}{p}} \rho^{(\theta-\Theta)/p}_{\circ} D^{n} f]_{\cC^{\delta}(\cD)} \leq C \|f\|_{K^{\gamma}_{p,\theta-p,\Theta-p}(\cD)},
\label{eqn 8.21.1}
  \end{eqnarray}
 where $C=C(d,\gamma,p,\theta,\Theta,\cM)$.

\end{lemma}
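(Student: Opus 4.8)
The plan is to deduce every assertion from the corresponding, by now classical, property of the single--weight spaces $H^{\gamma}_{p,\Theta}(\cO)$ (\cite{Krylov 1999-1,Lo1,KK2004,ConicPDE}), using only two facts: the defining identity \eqref{eqn 8.10.1}, i.e. $f\in K^{\gamma}_{p,\theta,\Theta}(\cD)\iff \rho^{(\theta-\Theta)/p}_{\circ}f\in H^{\gamma}_{p,\Theta}(\cD)$ with equal norms, and the symbol estimate \eqref{eqn 8.28.4}, which says $\rho^{\mu}_{\circ}$ is a classical symbol of order $\mu$ relative to $\rho_{\circ}$. Since $0\in\partial\cD$, we have $\rho\leq\rho_{\circ}$ on $\cD$, so every bound $\rho^{|\alpha|}_{\circ}|D^{\alpha}(\cdots)|\leq N$ upgrades to $\rho^{|\alpha|}|D^{\alpha}(\cdots)|\leq N$; this is the only interaction between the two weight systems that will be used. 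I would first record the auxiliary $H^{\gamma}_{p,\Theta}(\cO)$--statements, all proved by localizing by the partition $\{\zeta(e^{-n}\psi_{\cO}(e^{n}\cdot))\}_{n\in\bZ}$, rescaling $x\mapsto e^{n}x$ so that the rescaled cut--offs and the rescaled weight $e^{-n}\psi_{\cO}(e^{n}\cdot)$ are comparable to $1$ with $C^{k}$--bounds uniform in $n$ (by \eqref{eqn 8.9.1}), applying the corresponding flat $H^{\gamma}_{p}(\bR^{d})$ fact, and re--summing by bounded overlap: (a) the norm \eqref{eqn 8.10.14} is independent, up to constants, of a cut--off satisfying \eqref{11.4.1}; (b) $\cC^{\infty}_{c}(\cO)$ is dense in $H^{\gamma}_{p,\Theta}(\cO)$; (c) $\|\psi^{\mu}_{\cO}h\|_{H^{\gamma}_{p,\Theta}(\cO)}\sim\|h\|_{H^{\gamma}_{p,\Theta+\mu p}(\cO)}$; (d) $\|ah\|_{H^{\gamma}_{p,\Theta}(\cO)}\leq N|a|^{(0)}_{n}\|h\|_{H^{\gamma}_{p,\Theta}(\cO)}$ whenever $|\gamma|\leq n$; (e) $D_{i}\colon H^{\gamma}_{p,\Theta}(\cO)\to H^{\gamma-1}_{p,\Theta+p}(\cO)$ is bounded; and (f) if $\gamma-\tfrac{d}{p}\geq n+\delta$ then $\sum_{k\leq n}|\rho^{k-1+\Theta/p}D^{k}h|_{C(\cO)}+[\rho^{n-1+\delta+\Theta/p}D^{n}h]_{C^{\delta}(\cO)}\leq N\|h\|_{H^{\gamma}_{p,\Theta-p}(\cO)}$. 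Items (a)--(e) are in \cite{Lo1} (see also \cite{ConicPDE}) and (f) is the weighted embedding of \cite{KK2004,Lo1}; all remain valid verbatim for $\ell_{2}$-- and $\bR^{d}$--valued functions.

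Granting (a)--(f), items (i) and (ii) are immediate: (i) is exactly statement (a), and (ii) follows since $\rho^{\mu}_{\circ}$ is a bijection of $\cC^{\infty}_{c}(\cD)$ onto itself and $\|f-\rho^{-\mu}_{\circ}g\|_{K^{\gamma}_{p,\theta,\Theta}(\cD)}=\|\rho^{\mu}_{\circ}f-g\|_{H^{\gamma}_{p,\Theta}(\cD)}$, which reduces density in $K^{\gamma}_{p,\theta,\Theta}(\cD)$ to (b). For (iii), with $\mu\in\bR$ and using $(\theta-\Theta)/p=((\theta+\mu p)-(\Theta+\mu p))/p$,
$$
\|\psi^{\mu}f\|_{K^{\gamma}_{p,\theta,\Theta}(\cD)}=\big\|\psi^{\mu}\big(\rho^{(\theta-\Theta)/p}_{\circ}f\big)\big\|_{H^{\gamma}_{p,\Theta}(\cD)}\sim\big\|\rho^{(\theta-\Theta)/p}_{\circ}f\big\|_{H^{\gamma}_{p,\Theta+\mu p}(\cD)}=\|f\|_{K^{\gamma}_{p,\theta+\mu p,\Theta+\mu p}(\cD)}
$$
by (c). For (iv), $\|af\|_{K^{\gamma}_{p,\theta,\Theta}(\cD)}=\|a(\rho^{(\theta-\Theta)/p}_{\circ}f)\|_{H^{\gamma}_{p,\Theta}(\cD)}\leq N|a|^{(0)}_{n}\|f\|_{K^{\gamma}_{p,\theta,\Theta}(\cD)}$ by (d), since the weight $\rho^{(\theta-\Theta)/p}_{\circ}$ is untouched by $a$; the $\ell_{2}$-- and $\bR^{d}$--valued versions are identical.

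For (v) the only point beyond (e) is the commutator of $D_{i}$ with the weight. Put $\mu=(\theta-\Theta)/p$ and $h=\rho^{\mu}_{\circ}f\in H^{\gamma}_{p,\Theta}(\cD)$; then $\rho^{\mu}_{\circ}D_{i}f=D_{i}h-(D_{i}\rho^{\mu}_{\circ})f=D_{i}h-\mu\,c^{i}h$ with $c^{i}:=x^{i}\rho^{-2}_{\circ}$. By \eqref{eqn 8.28.4} (with exponent $-1$) and $\rho\leq\rho_{\circ}$, the function $b^{i}:=\psi c^{i}$ satisfies $\sum_{|\alpha|\leq n}\rho^{|\alpha|}|D^{\alpha}b^{i}|\leq N(n)$ for every $n$, hence is an admissible multiplier in (d); therefore, by monotonicity in $\gamma$, then (c) with $\mu=-1$, then (d),
$$
\|c^{i}h\|_{H^{\gamma-1}_{p,\Theta+p}(\cD)}\leq\|\psi^{-1}b^{i}h\|_{H^{\gamma}_{p,\Theta+p}(\cD)}\sim\|b^{i}h\|_{H^{\gamma}_{p,\Theta}(\cD)}\leq N\|h\|_{H^{\gamma}_{p,\Theta}(\cD)},
$$
while $\|D_{i}h\|_{H^{\gamma-1}_{p,\Theta+p}(\cD)}\leq N\|h\|_{H^{\gamma}_{p,\Theta}(\cD)}$ by (e); adding and using \eqref{eqn 8.10.1} gives $\|D_{i}f\|_{K^{\gamma-1}_{p,\theta+p,\Theta+p}(\cD)}\leq N\|f\|_{K^{\gamma}_{p,\theta,\Theta}(\cD)}$, and \eqref{eqn 4.16.1} follows by iterating over the entries of $\alpha$. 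For (vi), apply (f) to $h=\rho^{\mu}_{\circ}f\in H^{\gamma}_{p,\Theta-p}(\cD)$ (note $f\in K^{\gamma}_{p,\theta-p,\Theta-p}(\cD)$ means precisely $\rho^{\mu}_{\circ}f\in H^{\gamma}_{p,\Theta-p}(\cD)$), and transfer to $f$ by a Leibniz expansion $D^{k}f=\sum_{j\leq k}c_{j,k}\,D^{j}(\rho^{-\mu}_{\circ})\,D^{k-j}h$: by \eqref{eqn 8.28.4} and $\rho\leq\rho_{\circ}$ one has $|D^{j}\rho^{-\mu}_{\circ}|\leq N\rho^{-\mu}_{\circ}\rho^{-j}_{\circ}\leq N\rho^{-\mu}_{\circ}\rho^{-j}$, which converts $\rho^{k-1+\Theta/p}\rho^{\mu}_{\circ}|D^{k}f|$ into a finite sum of the quantities $\rho^{(k-j)-1+\Theta/p}|D^{k-j}h|$ already bounded in $C(\cD)$ by (f); the $C^{\delta}$--seminorm is handled by the same bookkeeping, using in addition that the weighted symbols $\rho^{n-1+\delta+\Theta/p}\rho^{\mu}_{\circ}D^{j}\rho^{-\mu}_{\circ}$, together with one $\rho^{n-1+\Theta/p}$--weighted derivative, stay bounded on $\cD$, so a difference quotient either falls on such a bounded, $C^{\delta}$--controlled factor or on a $D^{k-j}h$ factor controlled by (f).

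I expect the genuine work to sit in the auxiliary step, specifically in (f) and (d). In (f), the $C^{\delta}$--part needs the customary but slightly delicate patching of the H\"older seminorm across the dyadic shells $\{\rho\sim e^{n}\}$ --- two points in the same or adjacent shells are compared by the flat embedding, two points in far shells by the uniform $C(\cD)$--bounds on $D^{n}h$ --- and, when $\gamma\notin\bN_{0}$, one must invoke the fractional--order flat Sobolev embedding. In (d) (equivalently \eqref{eqn 8.19.11} and its $H^{\gamma}_{p,\Theta}$ analogue) the sharp requirement $|\gamma|\leq n$ forces the use of the classical fact that $C^{n}(\bR^{d})$--functions act boundedly on $H^{\gamma}_{p}(\bR^{d})$ for $|\gamma|\leq n$, applied to the rescaled coefficients $a(e^{n}\cdot)$, whose $C^{n}(\bR^{d})$--norms on the relevant regions are dominated by $|a|^{(0)}_{n}$ uniformly in $n$ thanks to \eqref{eqn 8.9.1}. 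Once (a)--(f) are in hand, the rest is a mechanical transfer through the correspondence $f\leftrightarrow\rho^{(\theta-\Theta)/p}_{\circ}f$ and the symbol bound \eqref{eqn 8.28.4}.
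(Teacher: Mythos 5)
Your proposal is correct and follows essentially the same route as the paper: everything is reduced to the known properties of the single-weight spaces $H^{\gamma}_{p,\Theta}$ (Lototsky's Proposition 2.2, Theorem 3.1 and Theorem 4.3) through the correspondence $f\leftrightarrow\rho_{\circ}^{(\theta-\Theta)/p}f$, with the commutator terms in (v) and (vi) absorbed via the symbol bound \eqref{eqn 8.28.4}, the inequality $\rho\le\rho_{\circ}$, and the multiplier and weight-shift properties — exactly the paper's argument (your $b^{i}=\psi x^{i}\rho_{\circ}^{-2}$ is the paper's $\psi\,\xi^{-1}D\xi$). The only difference is that you spell out the Leibniz bookkeeping in (vi) that the paper leaves implicit.
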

\begin{proof}
All the results  follow from  Definition \ref{defn 8.19} and properties of the weighted Sobolev space $H^{\gamma}_{p,\Theta}(\cO)$ (cf. \cite{Lo1,Krylov 1999-1, Krylov 2001,KK2004}). See e.g. \cite[Proposition 2.2]{Lo1}  for (i)-(iii) and see \cite[Theorem 3.1]{Lo1} for (iv).   

 To prove (v),  we put $\xi=\rho^{(\theta-\Theta)/p}_{\circ}$. Then, using $\xi Df=D(\xi f)-\xi (\xi^{-1}D \xi) f$ and  \eqref{eqn 8.10.1}, we get
$$
\|Df\|_{K^{\gamma-1}_{p,\theta+p,\Theta+p}(\cD)}\leq \|D(\xi f)\|_{H^{\gamma-1}_{p,\Theta+p}(\cD)}+\|(\xi^{-1}D \xi) f\|_{K^{\gamma-1}_{p,\theta+p,\Theta+p}(\cD)}.
$$
By \cite[Theorem 3.1]{Lo1}, 
$$\|D(\xi f)\|_{H^{\gamma-1}_{p,\Theta+p}(\cD)} \leq C  \|\xi f\|_{H^{\gamma}_{p,\Theta}(\cD)}=C\|f\|_{K^{\gamma}_{p,\theta,\Theta}(\cD)}.
$$
Using \eqref{eqn 8.28.4}, one can check $\vert \psi \xi^{-1}D\xi\vert ^{(0)}_m<\infty$ for any $m\in \bN$. Thus, by \eqref{eqn 8.19.81} and \eqref{eqn 8.19.11},  
\begin{eqnarray*}
\|(\xi^{-1}D \xi) f\|_{K^{\gamma-1}_{p,\theta+p,\Theta+p}(\cD)}
&\leq&C\| (\psi \xi^{-1}D \xi)  f\|_{K^{\gamma-1}_{p,\theta,\Theta}(\cD)}\leq C \|f\|_{K^{\gamma-1}_{p,\theta,\Theta}(\cD)}.
\end{eqnarray*}
Thus  (v) is proved.

  Finally  we prove (vi). Put $g=\xi f$. Then by \cite[Theorem 4.3]{Lo1}, 
\begin{equation}
  \label{eqn 8.28.8}
\sum_{k\leq n} \vert \rho^{k-1+\frac{\Theta}{p}}  D^{k}g\vert _{\cC(\cD)}  
+ [\rho^{n-1+\delta+\frac{\Theta}{p}} D^{n} g]_{\cC^{\delta}(\cD)} \leq C \|g\|_{H^{\gamma}_{p,\Theta-p}(\cD)}.
\end{equation}
Hence, to prove (vi), it is enough to note that  the left hand side of \eqref{eqn 8.21.1} is bounded by a constant times of the left hand side of \eqref{eqn 8.28.8}.  The lemma is proved.
\end{proof}

Using the aforementioned spaces,  we now  introduce the function spaces for the  solutions $u$ to equation \eqref{stochastic parabolic equation} as well as the function spaces for the inputs $f^0,\tbf$, and $g$.
To make equation \eqref{stochastic parabolic equation} well-defined after all, we restrict $p\in [2,\infty)$; see Remark \ref{sto series} ($i$) below. With such $p$ and a fixed time $T\in(0,\infty)$  we first define  
\begin{eqnarray*}\label{entire}
&&\bH^{\gamma}_{p}(T):=L_p(\Omega\times (0,T], \cP ; H^{\gamma}_p),\\ 
&&\bH^{\gamma}_{p}(T,\ell_2):=L_p(\Omega\times (0,T], \cP ; H^{\gamma}_p(\ell_2)).
\end{eqnarray*}
Next, for   $\theta, \Theta, \gamma \in\bR$  we define the function spaces
\begin{eqnarray*}
&&\bK^{\gamma}_{p,\theta,\Theta}(\cD,T)\,:=L_p(\Omega\times (0,T], \cP;K^{\gamma}_{p,\theta,\Theta}(\cD)),\\
&&\bK^{\gamma}_{p,\theta,\Theta}(\cD,T,d)\,:=L_p(\Omega\times (0,T], \cP;K^{\gamma}_{p,\theta,\Theta}(\cD; \bR^d)),\\
&&\bK^{\gamma}_{p,\theta,\Theta}(\cD,T, \ell_2)\,:=L_p(\Omega \times (0,T], \cP;K^{\gamma}_{p,\theta,\Theta}(\cD;\ell_2)),
\end{eqnarray*}
and denote
$$
\bL_{p,\theta,\Theta}(\cD,T):=\bK^0_{p,\theta,\Theta}(\cD,T),\quad \bL_{p,\theta,\Theta}(\cD,T,d):=\bK^0_{p,\theta,\Theta}(\cD,T,d),
$$
$$ \bL_{p,\theta,\Theta}(\cD,T, \ell_2):=\bK^0_{p,\theta,\Theta}(\cD,T, \ell_2).
$$
Also, by  $\bK^{\infty}_c(\cD,T)$  we denote  the space of all functions $f$ of the form
\begin{equation*}
f(\omega,t,x)=\sum^m_{i=1}{\bf{1}}_{(\tau_{i-1}(\omega),\tau_i(\omega)]}(t)f_i(x),
\end{equation*}
where $\tau_0\le \cdots\le \tau_m$ is a finite sequence of bounded  stopping times with respect to the filtration $(\rF_t)_{t\geq 0}$, and $f_i\in \cC^{\infty}_c(\cD)$, $i=1,\ldots,m$. Similarly, we define $\bK^{\infty}_c(\cD,T, \ell_2)$ as the space of $\ell_2$-valued functions $g=(g^1,g^2,
\ldots)$ such that  the first finite number of $g^k$ are in $\bK^{\infty}_c(\cD,T)$ and the rest are all identically zero. We also define $\bK^{\infty}_c(\cD,T, d)$ for  $\bR^d$-valued functions $\tbf=(f^1,\ldots,f^d)$ in the same manner. 
Moreover, by $\bK^{\infty}_c(\cD)$ we denote  the space of all functions $f$ of the form
\begin{equation*}
f(\omega,x)=\sum^m_{i=1}{\bf{1}}_{A_i}(\omega)f_i(x),
\end{equation*}
where $A_i\in\rF_0$ and $f_i\in \cC^{\infty}_c(\cD)$, $i=1,\ldots,m$. 
 
\begin{remark}\label{dense space}
For any $\theta, \Theta, \gamma \in \bR$,   $\bK^{\infty}_c(\cD,T)$ is dense in $\bK^{\gamma}_{p,\theta,\Theta}(\cD,T)$  and so is $\bK^{\infty}_c(\cD,T,\ell_2)$ in $\bK^{\gamma}_{p,\theta,\Theta}(\cD,T,\ell_2)$. Indeed, by the definition of $\cP$, any function $f\in \bK^{\gamma}_{p,\theta,\Theta}(\cD,T)$ can be approximated by functions of the type 
$$
\sum_{i=1}^m 1_{(\tau_i(\omega), \tau_{i+1}(\omega)]}(t) h_i(x),
$$
where $\tau_m$ are bounded stopping times and $h_i\in K^{\gamma}_{p,\theta,\Theta}(\cD)$, $i=1,\ldots,m$.  Thus the claim follows from Lemma \ref{property1} (ii).  Similarly, $\bK^{\infty}_c(\cD)$ is dense in $L_p(\Omega;K^{\gamma}_{p,\theta,\Theta}(\cD)):=L_p(\Omega, \rF_0,\bP;K^{\gamma}_{p,\theta,\Theta}(\cD))$.
\end{remark}

From now on we will also use the notation 
$$U^{\gamma+2}_{p,\theta,\Theta}(\cD):=K^{\gamma+2-2/p}_{p,\theta+2-p,\Theta+2-p}(\cD).
$$
The following definition frames the spaces for the solutions of our SPDE. 
\begin{defn}\label{first spaces}

  Let $p\in[2,\infty)$ and  $\theta, \Theta,\gamma \in\bR$.   We write $u\in\cK^{\gamma+2}_{p,\theta,\Theta}(\cD,T)$ if
 $u \in \bK^{\gamma+2}_{p,\theta-p,\Theta-p}(\mathcal{D},T)$, $u(0,\cdot)\in \bU^{\gamma+2}_{p,\theta,\Theta}(\cD):=L_p(\Omega,\rF_0,\bP;U^{\gamma+2}_{p,\theta,\Theta}(\cD))$, and there exists $(\tilde{f}, \tilde{g}) \in\bK^{\gamma}_{p,\theta+p,\Theta+p}(\mathcal{D},T)\times \bK^{\gamma+1}_{p,\theta,\Theta}(\cD,T, \ell_2)$ such that  
\begin{align}
du=\tilde{f}\,dt+\sum_k \tilde{g}^kdw^k_t,\quad t\in(0,T] \nonumber
\end{align}
  in the sense of distributions on $\cD$, that is,  for any $\varphi\in \cC_c^{\infty}(\cD)$ the equality
\begin{align}
  \label{eqn sol}
(u(t,\cdot),\varphi)=(u(0,\cdot),\varphi)+\int^{t}_{0}(\tilde{f}(s,\cdot),\varphi)ds+\sum_{k=1}^{\infty}\int^t_0(\tilde{g}^k(s,\cdot),\varphi)dw^k_s
\end{align}
holds for all $t\in (0,T]$ (a.s.).
In this case we  write
\begin{align*}
\bD u:=\tilde{f}\quad\text{and}\quad \bS u:=\tilde{g}.
\end{align*}
The norm in $\cK^{\gamma+2}_{p,\theta,\Theta}(\cD,T)$  is given  by
\begin{align*}
\|u\|_{\cK^{\gamma+2}_{p,\theta,\Theta}(\cD,T)}&=\|u\|_{\bK^{\gamma+2}_{p,\theta-p,\Theta-p}(\cD,T)}+\|\bD u\|_{\bK^{\gamma}_{p,\theta+p,\Theta+p}(\cD,T)}+\|\bS u\|_{\bK^{\gamma+1}_{p,\theta,\Theta}(\cD,T,\ell_2)}\\
&\quad +\|u(0,\cdot)\|_{\bU^{\gamma+2}_{p,\theta,\Theta}(\cD)}.
\end{align*}
\end{defn}

\vspace{0.1cm}

\begin{remark}
   \label{sol}
 Let us go back to our main equation \eqref{stochastic parabolic equation}.  Let  $f^0\in \bK^{\gamma}_{p,\theta+p,\Theta+p}(\cD,T)$, 
   $\tbf=(f^1,\cdots,f^d)\in \bK^{\gamma+1}_{p,\theta,\Theta}(\cD,T, d)$, $g\in \bK^{\gamma+1}_{p,\theta,\Theta}(\cD,T, \ell_2)$, $u(0,\cdot)\in \bU^{\gamma+2}_{p,\theta,\Theta}(\cD)$, and  $u$ belong to $ \bK^{\gamma+2}_{p,\theta-p,\Theta-p}(\cD,T)$ and be a solution to   equation \eqref{stochastic parabolic equation}, that is, $u$ satisfies  
   $$
d u =\left( \cL u+f^0+\sum_{i=1}^d f^i_{x^i}\right)dt +\sum^{\infty}_{k=1} g^kdw_t^k,\quad   t\in(0,T]
$$
 in the sense of distributions on $\cD$. Then by \eqref{eqn 4.16.1}  in Lemma \ref{property1} ($v$), we have
 $$
 \cL u+f^0+\sum_{i=1}^d f^i_{x^i}\in \bK^{\gamma}_{p,\theta+p,\Theta+p}(\cD,T)
 $$
  and consequently $u$ belongs to $\cK^{\gamma+2}_{p,\theta,\Theta}(\cD,T)$  with the accompanied inequality
  \begin{eqnarray}
 && \|u\|_{\cK^{\gamma+2}_{p,\theta,\Theta}(\cD,T)}\nonumber\\ &\leq& C \Big(\|u\|_{\bK^{\gamma+2}_{p,\theta-p,\Theta-p}(\cD,T)}+ \|f^0\|_{\bK^{\gamma}_{p,\theta+p,\Theta+p}(\cD,T)}+ \sum_{i=1}^d \|f^i\|_{\bK^{\gamma+1}_{p,\theta,\Theta}(\cD,T)} \nonumber \\
 &&\quad \quad +\|g\|_{\bK^{\gamma+1}_{p,\theta,\Theta}(\cD,T,\ell_2)}+\|u(0,\cdot)\|_{\bU^{\gamma+2}_{p,\theta,\Theta}(\cD)}\Big).  \label{eqiv norm}
  \end{eqnarray}
   \end{remark}

\begin{remark}
   \label{sto series}
   (i)    Note that for any $m,n\in \bN$ with $m>n$, the quadratic variation of the continuous martingale $\sum_{k=n}^m \int^t_0(\tilde{g}^k, \varphi) dw^k_s$ is $\sum_{k=n}^m \int^t_0 (\tilde{g}^k(s), \varphi)^2ds$.   Following the lines in \cite[Remark 3.2]{Krylov 1999-4} and using the condition $p\geq 2$, one can easily check
$$
   \bE\sum_{k=1}^{\infty} \int^T_0 (\tilde{g}^k(t),\varphi)^2 dt 
    \leq N(\varphi,p,T) \|\tilde{g}\|^p_{\bL_{p,\theta,\Theta}(\cD,T,\ell_2)},
$$
which implies  the infinite series $\sum_{k=1}^{\infty}\int^t_0 (\tilde{g}^k(s),\varphi)dw^k_s$ converges in $L_2\big(\Omega;\cC([0,T])\big)$ and in probability uniformly in $t\in [0,T]$. As a consequence, $(u(t,\cdot),\varphi)$ in \eqref{eqn sol} is a continuous semi-martingale on $[0,T]$.

(ii) In Definition~\ref{first spaces}, $\bD u$ and $\bS u$ are uniquely determined. This can be seen by using the same arguments in \cite[Remark 3.3]{Krylov 1999-4}.
\end{remark}


\begin{thm}
\label{banach}
For any $p\in [2,\infty)$ and  $\theta,\Theta, \gamma\in \bR$,  $\cK^{\gamma+2}_{p,\theta,\Theta}(\cD,T)$ is a   Banach space.
\end{thm}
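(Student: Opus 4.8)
The plan is the classical two-step argument: first check that $\cK^{\gamma+2}_{p,\theta,\Theta}(\cD,T)$ is a normed linear space, and then establish completeness by extracting limits from the four ``ambient'' spaces appearing in the norm and passing to the limit in the defining identity \eqref{eqn sol}. For the first step I would note that $\bD u$ and $\bS u$ are uniquely determined by $u$ (Remark~\ref{sto series}(ii)), so the four summands of $\|\cdot\|_{\cK^{\gamma+2}_{p,\theta,\Theta}(\cD,T)}$ are honest functions of $u$; the triangle inequality and homogeneity are then inherited from the norms of $\bK^{\gamma+2}_{p,\theta-p,\Theta-p}(\cD,T)$, $\bK^{\gamma}_{p,\theta+p,\Theta+p}(\cD,T)$, $\bK^{\gamma+1}_{p,\theta,\Theta}(\cD,T,\ell_2)$ and $\bU^{\gamma+2}_{p,\theta,\Theta}(\cD)$, and $\|u\|_{\cK^{\gamma+2}_{p,\theta,\Theta}(\cD,T)}=0$ forces $u=0$ already via its first summand. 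Each of those four spaces is a Banach space: each is an $L_p$-space of Banach-space-valued $\cP$- (respectively $\rF_0$-) measurable functions, and $K^{s}_{p,\theta',\Theta'}(\cD)$ is complete because by Definition~\ref{defn 8.19} the map $f\mapsto\rho_{\circ}^{(\theta'-\Theta')/p}f$ is an isometry onto $H^{s}_{p,\Theta'}(\cD)$, which is known to be complete (see the references cited in the proof of Lemma~\ref{property1}).

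Now take a Cauchy sequence $\{u_n\}$ in $\cK^{\gamma+2}_{p,\theta,\Theta}(\cD,T)$. Then $\{u_n\}$, $\{\bD u_n\}$, $\{\bS u_n\}$ and $\{u_n(0,\cdot)\}$ are Cauchy in the four ambient Banach spaces, so there are limits $u\in\bK^{\gamma+2}_{p,\theta-p,\Theta-p}(\cD,T)$, $\tilde f\in\bK^{\gamma}_{p,\theta+p,\Theta+p}(\cD,T)$, $\tilde g\in\bK^{\gamma+1}_{p,\theta,\Theta}(\cD,T,\ell_2)$ and $v_0\in\bU^{\gamma+2}_{p,\theta,\Theta}(\cD)$. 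The whole matter reduces to showing that $u\in\cK^{\gamma+2}_{p,\theta,\Theta}(\cD,T)$ with $\bD u=\tilde f$, $\bS u=\tilde g$ and $u(0,\cdot)=v_0$; granting this, the four convergences above are exactly the statement $u_n\to u$ in $\cK^{\gamma+2}_{p,\theta,\Theta}(\cD,T)$.

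To identify the limit I would fix $\varphi\in\cC^{\infty}_c(\cD)$ and use that, since $\mathrm{supp}\,\varphi$ is a compact subset of $\cD$ on which $\rho_{\circ}$ and $\rho$ lie between two positive constants, the functional $h\mapsto(h,\varphi)$ is bounded on $K^{s}_{p,\theta',\Theta'}(\cD)$ for every choice of indices (only finitely many dyadic pieces in Definition~\ref{defn 8.19} meet $\mathrm{supp}\,\varphi$, and on them the weight and the dilation are comparable to constants). This yields: $(u_n(0,\cdot),\varphi)\to(v_0,\varphi)$ in $L_p(\Omega)$; $\int_0^t(\bD u_n(s,\cdot),\varphi)\,ds\to\int_0^t(\tilde f(s,\cdot),\varphi)\,ds$ in $L_p(\Omega)$ uniformly in $t\le T$ (bound the difference by $T^{p-1}\bE\int_0^T|(\bD u_n-\tilde f,\varphi)|^p\,ds$); $\sum_{k}\int_0^t(\bS u_n^k(s,\cdot)-\tilde g^k(s,\cdot),\varphi)\,dw^k_s\to0$ uniformly in $t\le T$ in $L_2(\Omega)$, by the Burkholder--Davis--Gundy inequality and the estimate behind Remark~\ref{sto series}(i) applied to $\bS u_n-\tilde g$ (all the series of It\^o integrals involved converging as in that remark); and $(u_n(t,\cdot),\varphi)\to(u(t,\cdot),\varphi)$ in $L_p(\Omega\times(0,T])$, hence for a.e.\ $t$ in $L_p(\Omega)$ along a subsequence. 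Writing \eqref{eqn sol} for each $u_n$ and letting $n\to\infty$ gives, for a.e.\ $t\in(0,T]$ and a.s.,
\[
(u(t,\cdot),\varphi)=(v_0,\varphi)+\int_0^t(\tilde f(s,\cdot),\varphi)\,ds+\sum_{k=1}^{\infty}\int_0^t(\tilde g^k(s,\cdot),\varphi)\,dw^k_s .
\]
The right-hand side is a.s.\ continuous in $t$, hence a continuous modification; fixing the corresponding version of $u$ makes this identity hold for all $t\in(0,T]$, and at $t=0$ it gives $(u(0,\cdot),\varphi)=(v_0,\varphi)$ for every $\varphi$, i.e.\ $u(0,\cdot)=v_0$ a.s. Thus $u\in\cK^{\gamma+2}_{p,\theta,\Theta}(\cD,T)$ with the asserted drift, diffusion and initial data, and the proof is complete.

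The step I expect to be the main obstacle is the stochastic term: making rigorous the limit of the infinite series of It\^o integrals uniformly in $t$, and then handling the exceptional-set-in-$t$ issue so as to pin down a genuine version of $u$ for which \eqref{eqn sol} holds for \emph{every} $t\in(0,T]$ and not merely for almost every $t$. The rest is bookkeeping with the ambient weighted $L_p$-spaces.
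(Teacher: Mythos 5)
Your argument is correct and is essentially the proof the paper has in mind: the paper simply defers to Remark 3.8 of Krylov (2001), whose content is exactly the limit-extraction-and-identification scheme you carry out (completeness of the four ambient weighted $L_p$-spaces, extraction of limits of $u_n$, $\bD u_n$, $\bS u_n$, $u_n(0,\cdot)$, and passage to the limit in \eqref{eqn sol} against each test function, using that $h\mapsto(h,\varphi)$ is bounded on $K^{s}_{p,\theta',\Theta'}(\cD)$). The only point worth making explicit in your last step is that the exceptional set in $(\omega,t)$ can be chosen independently of $\varphi$ by running the argument over a countable family of test functions dense in the relevant dual norms, which is precisely how the cited remark fixes a single version of $u$ for which \eqref{eqn sol} holds for all $t$ and all $\varphi$.
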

\begin{proof}
We only need to prove the completeness.  This can be proved by repeating argument in Remark 3.8 of \cite{Krylov 2001}, which treats the case $\theta=\Theta$ and 
$\cD=\bR^d_+$. The argument in this proof is quite universal and,    without any changes,  works on any conic domain $\cD$ with any $\theta,\Theta\in \bR$. 
\end{proof}

The following theorem addresses important temporal properties of the functions in $\cK^{\gamma+2}_{p,\theta,\Theta}(\cD,T)$. See Section \ref{sec:Introduction} for the notations $[\cdot]_{\cC^{\alpha}}$ and $\vert \cdot\vert _{\cC^{\alpha}}$.

\begin{thm}
\label{embedding}  
Let $p\in[2,\infty)$ and $\theta,\Theta, \gamma\in \bR$. 

(i) If $2/p<\alpha<\beta \leq 1$, then for any $u\in  \cK^{\gamma+2}_{p,\theta,\Theta}(\cD,T)$,
\begin{eqnarray}
&&\bE [\psi^{\beta-1}u]^p_{\cC^{\alpha/2-1/p}\left([0,T]; K^{\gamma+2-\beta}_{p,\theta,\Theta}(\cD)\right)}\leq C\,T^{(\beta-\alpha)p/2}\|u\|^p_{\cK^{\gamma+2}_{p,\theta,\Theta}(\cD,T)},   \label{eqn 8.10.10}\label{Holder1}
\end{eqnarray}
and in addition, if $\psi^{\beta-1}u(0,\cdot)\in L_p(\Omega;K^{\gamma+2-\beta}_{p,\theta,\Theta}(\cD))$, 
\begin{eqnarray}
\bE  \vert \psi^{\beta-1}u\vert ^p_{\cC\left([0,T]; K^{\gamma+2-\beta}_{p,\theta,\Theta}(\cD)\right)}&\leq& C\bE\|\psi^{\beta-1}u(0,\cdot)\|^p_{K^{\gamma+2-\beta}_{p,\theta,\Theta}(\cD)}\nonumber\\
&& \quad+C T^{p\beta/2-1}\|u\|^p_{\cK^{\gamma+2}_{p,\theta,\Theta}(\cD,T)},\label{Holder2}
\end{eqnarray}
where $\psi$ satisfies \eqref{eqn 8.9.1} and constants $C$ are independent of $T$ and $u$.

(ii) For any $u\in  \cK^{\gamma+2}_{p,\theta,\Theta}(\cD,T)$ with $u(0,\cdot)= 0$, $u$ belongs to $ L_p(\Omega; \cC([0,T]; K^{\gamma}_{p,\theta,\Theta}(\cD))$ and
$$
\bE \sup_{t\leq T} \|u(t)\|^p_{K^{\gamma+1}_{p,\theta,\Theta}(\cD)}\leq C \|u\|^p_{\cK^{\gamma+2}_{p,\theta,\Theta}(\cD,T)},
$$
where $C=C(d,p,n,\theta,\Theta,\cD, T)$. In particular, for any $t\leq T$,
\begin{equation}
 \label{eqn 8.25.31}
\|u\|^p_{\bK^{\gamma+1}_{p,\theta,\Theta}(\cD,t)}\leq \int^t_0 \bE\sup_{r\leq s} \|u(r)\|^p_{K^{\gamma+1}_{p,\theta,\Theta}(\cD,r)} ds \leq 
C\int^t_0 \|u\|^p_{\cK^{\gamma+2}_{p,\theta,\Theta}(\cD,s)}ds.
\end{equation}
\end{thm}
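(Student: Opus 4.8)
\textbf{Proof plan for Theorem \ref{embedding}.}

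The plan is to reduce everything to the case of the half space $\bR^d_+$ with a single weight ($\theta=\Theta$), where the corresponding statements are already available in the literature (cf.\ \cite{Krylov 2001, KK2004}), and then to transfer them to a conic domain $\cD$ with a mixed weight by means of the two reductions already built into the definitions. First, by \eqref{eqn 8.10.1} the map $f\mapsto \rho_\circ^{(\theta-\Theta)/p}f$ is an isometry from $K^\gamma_{p,\theta,\Theta}(\cD)$ onto $H^\gamma_{p,\Theta}(\cD)$, and because $\rho_\circ$ is a fixed (time-independent) positive function on $\cD$ satisfying \eqref{eqn 8.28.4}, multiplication by it commutes with $d/dt$ and with the stochastic differential; hence $u\in\cK^{\gamma+2}_{p,\theta,\Theta}(\cD,T)$ with $\bD u, \bS u$ is equivalent to $v:=\rho_\circ^{(\theta-\Theta)/p}u$ lying in the corresponding solution space with single weight $\Theta$, with $\bD v=\rho_\circ^{(\theta-\Theta)/p}\bD u$, $\bS v=\rho_\circ^{(\theta-\Theta)/p}\bS u$, and with equivalence of all the norms involved. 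The factors $\psi^{\beta-1}$ appearing in \eqref{eqn 8.10.10}--\eqref{Holder2} are likewise unaffected by this substitution (they commute with $\rho_\circ^{(\theta-\Theta)/p}$ up to bounded multipliers, using Lemma \ref{property1}(iv)). So from now on I may assume $\theta=\Theta$.

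For the single-weight case on a domain $\cO$, the next reduction is the dyadic decomposition encoded in Definition \ref{defn 8.28}: $\|h\|^p_{H^\gamma_{p,\Theta}(\cO)}\sim\sum_{n}e^{n\Theta}\|\zeta(e^{-n}\psi(e^n\cdot))h(e^n\cdot)\|^p_{H^\gamma_p}$. Applying the scaling $x\mapsto e^n x$ and the cutoff $\zeta(e^{-n}\psi(e^n\cdot))$ to equation \eqref{eqn sol}, one obtains, for each $n$, a genuine SPDE on $\bR^d$ (after trivial extension, as arranged before Definition \ref{defn 8.28}) whose coefficients are $e^{2n}$ times bounded functions and whose free terms are the corresponding rescaled pieces of $\bD u$ and $\bS u$; a Gronwall- or It\^o-formula-based argument, or simply the known embedding $\bH^{\gamma+2}_p(T)\cap\{u_0\in\ldots\}\hookrightarrow C^{\alpha/2-1/p}([0,T];H^{\gamma+2-\beta}_p)$ for SPDEs on $\bR^d$ (see \cite{Krylov 1999-4, Krylov 2001}), yields for each $n$ a H\"older-in-time bound on the rescaled piece with a constant carrying a factor $T^{(\beta-\alpha)p/2}$ (resp.\ $T^{p\beta/2-1}$) and, crucially, independent of $n$. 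The scaling is designed precisely so that the $e^{2n}$ in the operator is compensated by the $e^{-n}$ built into the weighting of the time interval when one tracks powers of $T$; summing over $n\in\bZ$ with the weights $e^{n\Theta}$ and using Lemma \ref{property1}(i) to recognize the sums as the weighted norms gives \eqref{eqn 8.10.10} and \eqref{Holder2}. For part (ii) one argues the same way but now applies the Burkholder--Davis--Gundy inequality to control $\bE\sup_{t\le T}$ of the rescaled pieces in $H^{\gamma+1}_p$ (the gain of one derivative from $H^{\gamma+2-2/p}$-type data to $H^{\gamma+1}$ being the usual $2/p$ room), again with $n$-uniform constants, and the two displayed inequalities in \eqref{eqn 8.25.31} are then immediate from the definition of $\bK$-norms and Fubini.

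The main obstacle is the $n$-uniformity of the constants together with the correct bookkeeping of the powers of $T$. Because the operator after rescaling is $\sum a^{ij}e^{2n}D_{ij}$, a naive application of a fixed-$T$ SPDE estimate would produce constants depending on $e^{2n}$ and blow up the sum. The remedy, following \cite{Krylov 2001} (Remark 3.8 and the surrounding material) and \cite{KK2004}, is to rescale \emph{time} as well, $t\mapsto e^{2n}t$, so that each dyadic piece solves an SPDE with $O(1)$ coefficients on a time interval of length $e^{2n}T$; the $C^{\alpha/2-1/p}$-seminorm in time then scales with a factor $e^{n(2(\alpha/2-1/p))}=e^{n(\alpha-2/p)}$, while the data norms pick up compensating powers, and one checks that all stray powers of $e^n$ cancel against the weight $e^{n\Theta}$ after one also accounts for the homogeneity of $dx$ under $x\mapsto e^n x$. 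Verifying this cancellation, and checking that the residual dependence on $T$ is exactly the stated $T^{(\beta-\alpha)p/2}$ and $T^{p\beta/2-1}$, is the delicate part; everything else is a routine assembly of known one-dimensional-in-time embeddings, BDG, and the density of $\bK^\infty_c(\cD,T)$ from Remark \ref{dense space} to justify the manipulations on smooth approximants first. I would also note that the case $u(0,\cdot)=0$ in (ii) removes the initial-data term and is exactly what makes the clean $\int_0^t$ bound in \eqref{eqn 8.25.31} possible.
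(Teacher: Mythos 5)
Your plan is correct and follows essentially the same route as the paper: substitute $v=\rho_\circ^{(\theta-\Theta)/p}u$ to reduce the mixed weight to the single weight $\Theta$, decompose dyadically via Definition \ref{defn 8.28}, apply the known $\bR^d$ embeddings to each rescaled piece, and resum. The one place where your bookkeeping differs is the mechanism for $n$-uniformity: the paper never rescales time and never touches the operator $\cL$ at all (the hypothesis $u\in\cK^{\gamma+2}_{p,\theta,\Theta}(\cD,T)$ only provides an abstract drift $\bD u$ and diffusion $\bS u$, so there is no ``$e^{2n}$ in the coefficients'' to worry about); instead it invokes \cite[Corollary 4.12]{Krylov 2001}, whose free interpolation parameter $a$ is chosen as $a=e^{-np}$ so that the three terms $a\|v_n\|^p_{\bH^{\gamma+2}_p}$, $a^{-1}\|\bar f_n\|^p_{\bH^{\gamma}_p}$, $\|\bar g_n\|^p_{\bH^{\gamma+1}_p(\ell_2)}$ pick up exactly the weights $e^{n(\Theta-p)}$, $e^{n(\Theta+p)}$, $e^{n\Theta}$ matching the definitions of $\bK^{\gamma+2}_{p,\theta-p,\Theta-p}$, $\bK^{\gamma}_{p,\theta+p,\Theta+p}$, $\bK^{\gamma+1}_{p,\theta,\Theta}$. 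Your explicit parabolic time rescaling accomplishes the same cancellation (the parameter $a$ in Krylov's estimate encodes precisely that scaling), so this is a cosmetic rather than substantive difference; the paper's version is slightly cleaner because it avoids changing the time interval and keeps the $T$-powers $T^{(\beta-\alpha)p/2}$ and $T^{p\beta/2-1}$ visible directly. For (ii) the paper likewise cites \cite[Remark 4.14]{Krylov 2001} with $\beta=1$ in place of a hands-on BDG argument, and \eqref{eqn 8.25.31} is, as you say, immediate from Fubini.
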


\begin{proof}
We follow the argument in  \cite[Section 6]{Krylov 2001} (or  the proof of \cite[Theorem 2.8]{Kim2014}), using \cite[Corollary 4.12]{Krylov 2001}.

(i).   As usual, we suppress the argument $\omega$. Put $\xi(x)=|x|^{(\theta-\Theta)/p}$ and set  $v=\xi u$, $\bar{f}=\xi \bD u$, $\bar{g}=\xi \bS u$. Then we have
$$
dv=\bar{f}dt+\sum_{k=1}^{\infty} \bar{g}^k dw^k_t, \quad t\in(0,T]
$$ 
in the sense of distributions on $\cD$ with the initial condition $v(0,\cdot)=\xi u(0,\cdot)$.   By \eqref{eqn 8.19.81} and Definition \ref{defn 8.19}, we have
\begin{eqnarray}
I_1&:=&\bE\left[\psi^{\beta-1}u\right]^p_{\cC^{\alpha/2-1/p}([0,T],K^{\gamma+2-\beta}_{p,\theta, \Theta}(\cD))} \nonumber\\
&\sim&   \bE\left[v\right]^p_{\cC^{\alpha/2-1/p}([0,T],H^{\gamma+2-\beta}_{p,\Theta+p(\beta-1)}(\cD))}   \label{eqn 8.20.11}\\
&\leq& C\sum_n e^{n(\Theta+p(\beta-1))}\bE
\left[v(\cdot,e^n\cdot)\zeta(e^{-n}\psi(e^n\cdot))\right]^p_{\cC^{\alpha/2-1/p}([0,T];H^{\gamma+2-\beta}_{p})}.  \nonumber
\end{eqnarray}
Now, by assumption, the function $v_n(t,x):=v(t,e^nx)\zeta(e^{-n}\psi(e^nx))$ belongs to $\bH^{\gamma+2}_p(T)$ and  satisfies 
\begin{equation}
   \label{eqn 08.31.1}
dv_n=\bar{f}(t,e^nx)\zeta(e^{-n}\psi(e^nx))dt+ \sum_{k=1}^{\infty} \bar{g}^k(t,e^nx) \zeta(e^{-n}\psi(e^nx)) dw^k_t, \quad t>0
\end{equation}
 on the entire space $\bR^d$. Then, by \cite[Corollary 4.12]{Krylov 2001} and \eqref{eqn 08.31.1}, there exists a constant $N>0$,
independent of $T$ and $u$, so that for any constant $a>0$,
\begin{eqnarray*}
&&\bE\left[v(\cdot,e^n\cdot)\zeta(e^{-n}\psi(e^n\cdot))\right]^p_{\cC^{\alpha/2-1/p}([0,T];H^{\gamma+2-\beta}_{p})}\\
&\leq&
C\,T^{(\beta-\alpha)p/2}a^{\beta-1} \Big(a\|v(\cdot,e^n\cdot)\zeta(e^{-n}\psi(e^n\cdot))\|^p_{\bH^{\gamma+2}_{p}(T)}\\
&&\hspace{1cm}+\,a^{-1}\|\bar{f}(\cdot,e^n\cdot)\zeta(e^{-n}\psi(e^n\cdot))\|^p_{\bH^{\gamma}_{p}(T)}+\|\bar{g}^k(\cdot,e^n\cdot) \zeta(e^{-n}\psi(e^n\cdot)) \|^p_{\bH^{\gamma+1}_{p}(T,\ell_2)} \Big)
\end{eqnarray*}
holds. Taking $a=e^{-np}$, we note that
(\ref{eqn 8.20.11}) yields
\begin{eqnarray}
\nonumber
I_1 &\leq& C\,T^{(\beta-\alpha)p/2}\Big(\sum_n    
e^{n(\Theta-p)}\|v(\cdot,e^n\cdot)\zeta(e^{-n}\psi(e^n\cdot))\|^p_{\bH^{\gamma+2}_p(T)}\\  \nonumber
&&\quad\quad+ \sum_n e^{n(\Theta+p)}
\|\bar{f}(\cdot,e^n\cdot)\zeta(e^{-n}\psi(e^n\cdot))\|^p_{\bH^{\gamma}_{p}(T)}\\
&&\quad \quad+\sum_n e^{n\Theta}\|\bar{g}^k(\cdot,e^n\cdot) \zeta(e^{-n}\psi(e^n\cdot)) \|^p_{\bH^{\gamma+1}_p(T,\ell_2)} \Big) \nonumber\\
&=& C\,T^{(\beta-\alpha)p/2} \Big(\|u\|^p_{\bK^{\gamma+2}_{p,\theta-p, \Theta-p}(\cD,T)}
+\|\bD u\|^p_{\bK^{\gamma}_{p,\theta+p,\Theta+p}(\cD,T)} +\|\bS u\|^p_{\bK^{\gamma+1}_{p,\theta, \Theta}(\cD,T,\ell_2)}\Big) \nonumber\\
&\le& C\, T^{(\beta-\alpha)p/2}\|u\|^p_{\cK^{\gamma+2}_{p,\theta, \Theta}(\cD,T)}.  \nonumber
\end{eqnarray}
Thus \eqref{Holder1} is proved. 

 If $\psi^{\beta-1}u(0,\cdot)\in L_p(\Omega;K^{\gamma+2-\beta}_{p,\theta,\Theta}(\cD))$, then we note that  $\psi^{\beta-1}u$ belongs to $\cC\left([0,T]; K^{\gamma+2-\beta}_{p,\theta,\Theta}(\cD)\right)$ now.  For  estimate \eqref{Holder2}, we have
\begin{eqnarray}
I_2&:=&\bE  \vert \psi^{\beta-1}u\vert ^p_{\cC\big([0,T]; K^{\gamma+2-\beta}_{p,\theta,\Theta}(\cD)\big)}\nonumber\\
&\leq& C\,\sum_n e^{n(\Theta+p(\beta-1))}\bE
\vert v(\cdot,e^n\cdot)\zeta(e^{-n}\psi(e^n\cdot))\vert ^p_{\cC([0,T];H^{\gamma+2-\beta}_{p})}    \label{Holder2 proof}
\end{eqnarray}
and by \cite[Corollary 4.12]{Krylov 2001} again, for any constant $a>0$, 
\begin{eqnarray*}
&&\bE\vert v(\cdot,e^n\cdot)\zeta(e^{-n}\psi(e^n\cdot))\vert ^p_{\cC\big([0,T];H^{\gamma+2-\beta}_{p}\big)}\\
&\leq&
C\,\bE\|v(0,e^n\cdot)\zeta(e^{-n}\psi(e^n\cdot))\|^p_{H^{\gamma+2-\beta}_{p}}\\ 
&&+C\,T^{p\beta/2-1}a^{\beta-1} \Big(a\|v(\cdot,e^n\cdot)\zeta(e^{-n}\psi(e^n\cdot))\|^p_{\bH^{\gamma+2}_{p}(T)}\\
&&\hspace{0.7cm}+a^{-1}\|\bar{f}(\cdot,e^n\cdot)\zeta(e^{-n}\psi(e^n\cdot))\|^p_{\bH^{\gamma}_{p}(T)}+\|\bar{g}^k(\cdot,e^n\cdot) \zeta(e^{-n}\psi(e^n\cdot)) \|^p_{\bH^{\gamma+1}_{p}(T,\ell_2)} \Big).
\end{eqnarray*}
This, \eqref{Holder2 proof}, and the same argument above, especially the adjustment $a=e^{-np}$ for each $n$, lead us to \eqref{Holder2}.

(ii).  We use the notations used in (i). Obviously,
$$
\bE\sup_{t\leq
T}\|u(t)\|^p_{K^{\gamma+1}_{p,\theta,\Theta}(\cD)}\leq C\,
\sum_ne^{n\Theta} \bE \sup_{t\leq T}
\|v(t,e^n\cdot)\zeta(e^{-n}\psi(e^n\cdot))\|^p_{H^{\gamma+1}_{p}}.
$$
 By Remark 4.14 in \cite{Krylov 2001} with $\beta=1$ there,  $v_n \in L_p(\Omega; \cC([0,T]; H^{\gamma+1}_p))$ and for any $a>0$,
\begin{align*}
\bE \sup_{t\leq T}
\|v(t,e^n\cdot)\zeta(e^{-n}\psi(e^n\cdot))\|^p_{H^{\gamma+1}_{p}}\leq
C\, \Big(a\|v(\cdot,e^n\cdot)\zeta(e^{-n}\psi(e^n\cdot))\|^p_{\bH^{\gamma+2}_{p}(T)}&\\
+a^{-1}\|\bar{f}(\cdot,e^n\cdot)\zeta(e^{-n}\psi(e^n\cdot))\|^p_{\bH^{\gamma}_{p}(T)}
+\|\bar{g}^k(\cdot,e^n\cdot) \zeta(e^{-n}\psi(e^n\cdot)) \|^p_{\bH^{\gamma+1}_{p}(T,\ell_2)}&\Big).
\end{align*}
Again, taking $a=e^{-np}$ and following the above arguments,  we get
\begin{eqnarray*}
&&\bE\sup_{t\leq T}\|u(t)\|^p_{K^{\gamma+1}_{p,\theta, \Theta}(\cD)} \\
&&\leq
C\, \Big(\|u\|^p_{\bK^{\gamma+2}_{p,\theta-p, \Theta-p}(\cD,T)}
+\|\bD u\|^p_{\bK^{\gamma}_{p,\theta+p, \Theta+p}(\cD,T)}+\|\bS u\|^p_{\bK^{\gamma+1}_{p,\theta,\Theta}(\cD,T,\ell_2)}\Big)\\
&&= C\,\|u\|^p_{\cK^{\gamma+2}_{p,\theta,\Theta}(\cD,T)}.
\end{eqnarray*}
The theorem is proved.
\end{proof}

\begin{remark}\label{additional condition initial}
The additional condition  $\psi^{\beta-1}u(0,\cdot)\in L_p(\Omega;K^{\gamma+2-\beta}_{p,\theta,\Theta}(\cD))$ for \eqref{Holder2} does not follow from   the assumption $u\in\cK^{\gamma+2}_{p,\theta,\Theta}(\cD,T)$.
This condition is unnecessary when we prove the corresponding result on polygonal domains. See Remark \ref{remark 8.29} for detail.
\end{remark}

\begin{remark}
 Theorems \ref{banach} and  \ref{embedding}  hold for any $\theta, \Theta\in \bR$, but certain restrictions will be given later for our main results, Theorems \ref{main result} and \ref{main result-random}.
Actually  the  admissible range of $\theta$ for our Sobolev-regularity theory of equation \eqref{stochastic parabolic equation} is affected by \emph{the shape of} $\cD=\cD(\cM)$, the uniform parabolicity of the leading coefficients, the space dimension $d$, and the summability parameter $p$. On the the hand, the admissible range of  $\Theta$ depends only on $d$ and  $p$,  that is, 
\begin{equation*}
d-1<\Theta<d-1+p.
\end{equation*}
\end{remark}

To explain the admissible range of $\theta$ for equation \eqref{stochastic parabolic equation} we need the following definitions. For some of the notations in them one can refer to   Section \ref{sec:Introduction}.

\begin{defn}[cf. Section 2 of \cite{Kozlov Nazarov 2014}] 
\label{lambda}

Let $L=\sum_{i,j=1}^d \alpha^{ij}(t)D_{ij}$ be a uniformly parabolic ``deterministic"  operator with bounded coefficients $\alpha^{ij}$s.

(i)  By  $\lambda^+_{c,L}=\lambda^+_{c,L,\cD}$  we denote  the supremum of all $\lambda\geq 0$ such that  for some constant $K_0=K_0(\lambda, L,\cM)$ it holds that
\begin{equation} \label{eqn 8.17.10}
\vert v(t,x)\vert\le K_0 \left(\frac{\vert x\vert}{R}\right)^{\lambda}\sup_{Q^{\mathcal{D}}_{\frac{3R}{4}}(t_0,0)}\ \vert v\vert,
\quad
\forall \;(t,x)\in Q^{\mathcal{D}}_{R/2}(t_0,0)
\end{equation}
 for any  $R>0$, $t_0$, and  the deterministic function $v=v(t,x)$ belonging to  $\mathcal{V}_{loc}(Q^{\mathcal{D}}_R(t_0,0))$  and satisfying
\begin{equation}
\label{eqn 8.17.14}
v_t=L v  \quad \text{in}\; Q^{\mathcal{D}}_R(t_0,0)\quad ; \;\quad
v(t,x)=0\quad\text{for}\;\; x\in\partial\mathcal{D}.
\end{equation}

(ii) By   $\lambda^-_{c,L}$  we denote   the supremum of  $\lambda \geq 0$ with above property  for the operator 
\begin{equation*}
\hat{L}:=\sum_{i,j}\alpha^{ij}(-t)D_{ij}.
\end{equation*}

\end{defn}

Note that $K_0$ in \eqref{eqn 8.17.10} may depend on the operator $L$.  Such dependency on $L$ is one of major obstacles when one handles SPDE having random coefficients,  since  it naturally involves infinitely many operators at the same time. To treat such case, which is in fact our case in this article, we design the following definition.

\begin{defn}
   \label{lambda2}
(i) By $\cT_{\nu_1,\nu_2}$ we denote the collection of all ``deterministic" operators in the form $L=\sum_{i,j=1}^d \alpha^{ij}(t)D_{ij}$, where $\alpha^{ij}(t)$ are measurable in $t$ and satisfy Assumption \ref{ass coeff} with the fixed constants $\nu_1,\nu_2$ in the uniform parabolicity condition \eqref{uniform parabolicity}. 

(ii) For a fixed $\cD=\cD(\cM)$, by $\lambda_c(\nu_1,\nu_2)=\lambda_c(\nu_1,\nu_2,\cD)$ we denote the supremum of all $\lambda\geq 0$ such that for some constant $K_0=K_0(\lambda, \nu_1,\nu_2,\cM)$ it holds that for any operator $L \in \cT_{\nu_1,\nu_2}$, $R>0$ and $t_0$, 
\begin{equation}
    \label{eqn 8.17.11}
\vert v(t,x)\vert\le K_0 \left(\frac{\vert x\vert}{R}\right)^{\lambda}\sup_{Q^{\mathcal{D}}_{\frac{3R}{4}}(t_0,0)}\ \vert v\vert , \quad
\forall \;(t,x)\in Q^{\mathcal{D}}_{R/2}(t_0,0),
\end{equation}
provided that $v$ is a deterministic function in  $\mathcal{V}_{loc}(Q^{\mathcal{D}}_R(t_0,0))$ satisfying
\begin{equation*}
v_t=L v  \quad \text{in}\; Q^{\mathcal{D}}_R(t_0,0)\quad ; \;\quad
v(t,x)=0\quad\text{for}\;\; x\in\partial\mathcal{D}.
\end{equation*}

\end{defn}

\begin{remark}
(i) Note that the dependency of $K_0$ in Definition \ref{lambda2} is more explicit compared to that of Definition \ref{lambda}. By definitions, if $L$ is an operator in $\cT_{\nu_1,\nu_2}$,  then 
$$\lambda^{\pm}_{c,L}\geq \lambda_c(\nu_1,\nu_2).
$$

(ii) The values of $\lambda^{\pm}_{c,L}$ and $\lambda_c(\nu_1,\nu_2)$ do not change if one replaces $\frac{3}{4}$ in \eqref{eqn 8.17.10} and \eqref{eqn 8.17.11} by any number in $(1/2,1)$ (see \cite[Lemma 2.2]{Kozlov Nazarov 2014}).  Following the proof of  \cite[Lemma 2.2]{Kozlov Nazarov 2014}, one can also show that for any constant $\beta>0$ 
$$
 \lambda^{\pm}_{c,\beta L}= \lambda^{\pm}_{c,L}, \qquad  \lambda_c(\beta \nu_1,\beta \nu_2)= \lambda_c(\nu_1,\nu_2).
 $$
 \end{remark}

Below are some sharp estimates for $\lambda^{\pm}_{c,L}$ and $\lambda_c(\nu_1,\nu_2)$. See \cite{Kozlov Nazarov 2014} for more informations.

\begin{prop}\label{critical exponents}
(i)  If $L=\Delta_x$,  then
\begin{equation*}
\lambda^{\pm}_{c,L}=-\frac{d-2}{2}+\sqrt{\Lambda+\frac{(d-2)^2}{4}} \, >0,
\end{equation*}
where $\Lambda=\Lambda_{\cD}$ is the first eigenvalue of Laplace-Beltrami operator with the Dirichlet condition on  $\mathcal{M}$.
In particular, if $d=2$ and $\cD=\cD^{(\kappa)}$ (see \eqref{wedge in 2d}), then
\begin{equation*}
\lambda^{\pm}_{c,L}=\frac{\pi}{\kappa}.
\end{equation*}

(ii)  Let $0<\nu_1\leq \nu_2<\infty$.     Then  we have $\lambda_{c}(\nu_1,\nu_2)>0$ and 
\begin{equation}\label{CUB3}
\lambda_{c}(\nu_1,\nu_2) \geq  -\frac{d}{2}+\sqrt{\frac{\nu_1}{\nu_2}}\sqrt{\Lambda+\frac{(d-2)^2}{4}}.
\end{equation}
\end{prop}

\begin{proof}
(i) follows from  \cite[Theorem 2.4.3]{Kozlov Nazarov 2014}. (ii) also follows from 
 the proofs of  \cite[Theorem 2.4.1, Theorem 2.4.7]{Kozlov Nazarov 2014}, which only consider the case $\nu_2=1/{\nu_1}$.  Inspecting the proofs of  \cite[Theorem 2.4.1, Theorem 2.4.7]{Kozlov Nazarov 2014} one can easily check 
 $$
\lambda^{\pm}_{c,L}\geq -\frac{d}{2}+\sqrt{\frac{\nu_1}{\nu_2}}\sqrt{\Lambda+\frac{(d-2)^2}{4}}\\,\,\,\text{and}\,\,\,\lambda^{\pm}_{c,L}>c>0\quad\text{if}\quad L\in \cT_{\nu_1,\nu_2},
$$
where the constant $c$ is  the H\"older exponent of solutions to equation \eqref{eqn 8.17.14}, and it can be chosen so that it depends only on $\nu_1,\nu_2$ and $\cM$.
Moreover, for $\lambda>0$ satisfying
$$
\lambda<c\vee \Big(-\frac{d}{2}+\sqrt{\frac{\nu_1}{\nu_2}}\sqrt{\Lambda+\frac{(d-2)^2}{4}}\Big)
$$
the constant $K_0$ in \eqref{eqn 8.17.11} can  be chosen so that it  depends only on $\nu_1,\nu_2$ and $\cM$.
This proves \eqref{CUB3}.

\end{proof}

\begin{example}[$d=2$]\label{example proposition}

For   $\kappa\in (0,2\pi)$ and $\alpha\in [0,2\pi)$,  we consider
$$
\cD=\mathcal{D}_{\kappa,\alpha}:=\left\{x=(r\cos\theta,\ r\sin\theta)\in\bR^2 \,\vert\, r\in(0,\ \infty),\ -\frac{\kappa}{2}+\alpha<\theta<\frac{\kappa}{2}+\alpha\right\}
$$
and  the constant operator 
$$
L=aD_{x_1x_1}+b(D_{x_1x_2}+D_{x_2x_1})+cD_{x_2x_2},
$$
where $a,b,c$ are constants such that $a+c>0$ and $ac-b^2>0$.   Then, by  \cite[Proposition 4.1]{Green}, we have
\begin{align*}
\lambda^{\pm}_{c,L}=\lambda^{\pm}_{c,L,\cD_{\kappa,\alpha}}=\frac{\,\pi\,}{\widetilde{\kappa}},   
\end{align*}
where
$$
\widetilde{\kappa}=\pi-\arctan\Big(\,\frac{\bar{c}\,\cot(\kappa/2)+\bar{b}}{\sqrt{\det(A)}}\,\Big)-\arctan\Big(\,\frac{\bar{c}\,\cot(\kappa/2)-\bar{b}}{\sqrt{\det(A)}}\,\Big)
$$
with constants $\bar{a}, \bar{b}, \bar{c}$ from the relation
$$
\begin{pmatrix} \bar{a} & \bar{b}\\
\bar{b}& \bar{c} \end{pmatrix}
= \begin{pmatrix} \cos \alpha & \sin \alpha\\
-\sin \alpha & \cos \alpha \end{pmatrix} 
\begin{pmatrix} a & b\\
b& c  \end{pmatrix} 
\begin{pmatrix} \cos \alpha & - \sin \alpha\\
\sin \alpha & \cos \alpha \end{pmatrix}.
$$
In particular, we have $\tilde{\kappa}=\pi$ if $\kappa=\pi$. 

Now, let $\kappa\neq \pi$, $\alpha=0$ for $\cD$.  Also, let $b=0$ in $L$.  In this case we can take   $\nu_1=a\wedge c$ and $\nu_2=a\vee c$ in \eqref{uniform parabolicity}. We note that $\tilde{\kappa}$ is determined by the simple relation
\begin{equation*}
\tan\Big(\frac{\widetilde{\kappa}}{\,2\,}\Big)=\sqrt{\frac{a}{c}}\tan\Big(\frac{\kappa}{\,2\,}\Big).
\end{equation*}
\end{example}

We are ready to pose our Sobolev  regularity results on conic domains.  We formulate them into two  theorems to handle random and non-random coefficients separately.  The proofs of them are located in Section \ref{sec:main proofs}.  
Note that the admissible range of $\theta$ for non-random coefficients is relatively wider than that of random coefficients.

\begin{thm}(SPDE on conic domains with non-random coefficients)
\label{main result}
Let $\cL=\sum_{ij}a^{ij}(t)D_{ij}$ be non-random,  $p\in[2,\infty)$, and   $\gamma \geq -1$. Also assume that    Assumptions \ref{ass M} and  \ref{ass coeff} hold, and     $\theta, \Theta\in\bR$  satisfy
\begin{equation}
    \label{theta11}
p(1-\lambda^+_{c,\cL})<\theta<p(d-1+\lambda^-_{c,\cL}), \qquad  d-1<\Theta<d-1+p.
\end{equation}
Then for any $f^0\in\bK^{\gamma \vee 0}_{p,\theta+p,\Theta+p}(\cD,T)$, $\tbf=(f^1,\cdots,f^d)\in \bK^{\gamma+1}_{p,\theta,\Theta}(\cD,T,d)$, $g\in\bK^{\gamma+1}_{p,\theta,\Theta}(\cD,T,l_2)$, and $u_0\in\bU^{\gamma+2}_{p,\theta,\Theta}(\cD)$,  equation \eqref{stochastic parabolic equation} has a unique solution $u$ in  the class $\cK^{\gamma+2}_{p,\theta,\Theta}(\cD,T)$ and moreover we have
\begin{eqnarray}\label{main estimate}
\|u\|_{\cK^{\gamma+2}_{p,\theta,\Theta}(\cD,T)}
&\leq& C\big(\|f^0\|_{\bK^{\gamma \vee 0}_{p,\theta+p,\Theta+p}(\cD,T)}
+ \|\tbf\|_{\bK^{\gamma+1}_{p,\theta,\Theta}(\cD,T,d)}+\|g\|_{\bK^{\gamma+1}_{p,\theta,\Theta}(\cD,T,l_2)}\nonumber\\
&&\;\;\quad+\|u_0\|_{\bU^{\gamma+2}_{p,\theta,\Theta}(\cD)}\big),
\end{eqnarray}
where the constant $C$ depends only on $\cM,d,p,\theta,\Theta,\cL, \gamma$.  In particular, it is independent of $T$.
\end{thm}

\begin{remark}
(i) A particular  result of the above theorem  is introduced in  \cite{CKL 2019+} (cf. \cite{CKLL 2018}).  More precisely,   the combination of 
Theorem 2.8 and Corollary 2.11 in \cite{CKL 2019+} covers the case 
$$
 \cL=\Delta, \quad   \Theta=d=2, \quad \cD=\cD^{(\kappa)}\text{ of}\;\; \eqref{wedge in 2d}.
$$

(ii) If $\gamma \geq 0$, the separation of two terms $f^0$ and $\tbf=(f^1,\cdots,f^d)$ in our equation is redundant and we simply pose $f\in\bK^{\gamma}_{p,\theta+p,\Theta+p}(\cD,T)$ instead. This is because, by \eqref{eqn 4.16.1}, we have
$
h^0+\sum_{i=1}^d h^i_{x^i}\in K^{\gamma}_{p,\theta+p,\Theta+p}(\cD)
$ for
$
h^0\in K^{\gamma}_{p,\theta+p,\Theta+p}(\cD),\;\; h^i \in K^{\gamma+1}_{p,\theta,\Theta}(\cD),\;i=1,\ldots,d.
$
The corresponding change in the estimate \eqref{main estimate} is clear.
\end{remark}

\begin{thm}(SPDE on conic domains with random coefficients)
    \label{main result-random}
Let $\cL=\sum_{ij}a^{ij}(\omega,t)D_{ij}$ be random,  $p\in[2,\infty)$, and   $\gamma \geq -1$. Also assume that    Assumptions \ref{ass M} and  \ref{ass coeff} hold,  $d-1<\Theta<d-1+p$, and 
\begin{equation}
    \label{theta}
p\big(1-\lambda_{c}(\nu_1,\nu_2)\big)<\theta<p\big(d-1+\lambda_{c}(\nu_1,\nu_2)\big).
\end{equation}
Then all the claims of Theorem \ref{main result} hold with a constant $N=N(\cM,d,p,\gamma,\theta,\Theta,\nu_1,\nu_2)$. 
\end{thm}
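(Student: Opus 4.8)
The plan is to deduce Theorem \ref{main result-random} from Theorem \ref{main result} (the non-random case) by a partition-of-unity argument in the time variable combined with the method of continuity, exactly as is customary for $L_p$-theory of SPDEs with merely measurable coefficients in $t$. The key observation is the one recorded in the remark following Definition \ref{lambda2}: if $L\in\cT_{\nu_1,\nu_2}$, then $\lambda^{\pm}_{c,L}\ge \lambda_c(\nu_1,\nu_2)$, so the range of $\theta$ admitted in Theorem \ref{main result-random}, namely $p(1-\lambda_c(\nu_1,\nu_2))<\theta<p(d-1+\lambda_c(\nu_1,\nu_2))$, is contained in the range $p(1-\lambda^+_{c,L})<\theta<p(d-1+\lambda^-_{c,L})$ for \emph{every} frozen operator $L$ with coefficients in $\cT_{\nu_1,\nu_2}$. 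Moreover the constant $N$ in Theorem \ref{main result}, applied to a frozen operator $L\in\cT_{\nu_1,\nu_2}$, depends on $L$ only through $\nu_1,\nu_2$ (this is where one uses that the constant $K_0$ in \eqref{eqn 8.17.11} is uniform over $\cT_{\nu_1,\nu_2}$, hence the a priori estimate for the frozen equation is uniform); so the estimate \eqref{main estimate} for frozen operators is uniform over all coefficient choices in $\cT_{\nu_1,\nu_2}$.

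First I would establish the \emph{a priori estimate} \eqref{main estimate} for a solution $u\in\cK^{\gamma+2}_{p,\theta,\Theta}(\cD,T)$ of \eqref{stochastic parabolic equation} with the random operator $\cL$. Fix $\varepsilon>0$. Since $a^{ij}(\omega,t)$ is $\cP$-measurable, choose a partition $0=t_0<t_1<\cdots<t_M=T$ and, on each subinterval $(t_{m-1},t_m]$, a (random, $\rF_{t_{m-1}}$-measurable) constant-in-$t$ operator $\cL_m=\sum_{ij}a^{ij}_m D_{ij}$ with $a^{ij}_m\in\cT_{\nu_1,\nu_2}$ approximating $\cL$. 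On $(t_{m-1},t_m]$ one writes $du=(\cL_m u + h^0_m + \sum_i f^i_{x^i})\,dt + \sum_k g^k\,dw^k_t$ with $h^0_m := f^0 + (\cL-\cL_m)u$; since $|a^{ij}-a^{ij}_m|$ is small on this interval and $u\in\bK^{\gamma+2}_{p,\theta-p,\Theta-p}$, the pointwise-multiplier bound \eqref{eqn 8.19.11} gives $\|(\cL-\cL_m)u\|_{\bK^{\gamma}_{p,\theta+p,\Theta+p}((t_{m-1},t_m])}\le \varepsilon_m \|u\|_{\bK^{\gamma+2}_{p,\theta-p,\Theta-p}((t_{m-1},t_m])}$. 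Applying Theorem \ref{main result} on each subinterval — conditioning on $\rF_{t_{m-1}}$ so that $\cL_m$ is genuinely non-random and the uniform constant $N=N(\cM,d,p,\gamma,\theta,\Theta,\nu_1,\nu_2)$ applies — together with the embedding \eqref{eqn 8.25.31} of Theorem \ref{embedding}(ii) to absorb the contribution of $u(t_{m-1},\cdot)$ as an initial datum, and then summing over $m$ and using Gronwall's inequality (via \eqref{eqn 8.25.31}) to handle the cross-interval dependence, I would arrive at \eqref{main estimate}, provided the mesh is fine enough that $\sup_m\varepsilon_m$ is small compared to $1/N$. This pasting step, and in particular making the Gronwall bookkeeping over the subintervals produce a constant independent of $T$, is the main technical point; it is routine but must be done carefully because the estimate \eqref{main estimate} asserts $T$-independence.

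Next comes \emph{existence and uniqueness}. Uniqueness is immediate from the a priori estimate applied to the difference of two solutions with vanishing data. For existence I would use the method of continuity along the segment $\cL_\tau := (1-\tau)\Delta + \tau\cL$, $\tau\in[0,1]$: each $\cL_\tau$ has coefficients in $\cT_{\nu_1',\nu_2'}$ for constants $\nu_1',\nu_2'$ depending only on $\nu_1,\nu_2$ (and $d$), and one checks $\lambda_c(\nu_1',\nu_2',\cD)$ is still large enough that $\theta$ lies in the required range — here one invokes the lower bound \eqref{CUB3} of Proposition \ref{critical exponents}, or more simply notes that the convex combination of a parabolic operator with $\Delta$ only improves the ellipticity ratio, so $\lambda_c$ does not decrease and the range of $\theta$ is preserved along the whole segment. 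Since $\tau=0$ gives the heat operator, for which solvability in $\cK^{\gamma+2}_{p,\theta,\Theta}(\cD,T)$ is known (it is the special case of Theorem \ref{main result} with $\cL=\Delta$, itself built on the PDE results of \cite{ConicPDE} and the Green's function estimates of \cite{Green}), the uniform a priori estimate just established propagates solvability from $\tau=0$ to $\tau=1$ in the standard way. This yields the unique solution $u\in\cK^{\gamma+2}_{p,\theta,\Theta}(\cD,T)$ and the estimate \eqref{main estimate} with the stated constant, completing the proof.
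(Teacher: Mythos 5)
You have the right high-level architecture --- estimates uniform over $\cT_{\nu_1,\nu_2}$ plus the method of continuity --- and your observation that the constants of the non-random theorem are uniform over $\cT_{\nu_1,\nu_2}$ once $\theta$ obeys \eqref{theta} is indeed the crucial ingredient. However, your derivation of the a priori estimate has a genuine gap: the time-freezing/absorption step fails for coefficients that are merely measurable in $t$. You need $\sup_{t\in(t_{m-1},t_m]}|a^{ij}(\omega,t)-a^{ij}_m|$ to be small in order to absorb $\|(\cL-\cL_m)u\|_{\bK^{\gamma}_{p,\theta+p,\Theta+p}}\le\varepsilon_m\|u\|_{\bK^{\gamma+2}_{p,\theta-p,\Theta-p}}$ into the left-hand side, but Assumption \ref{ass coeff} imposes no continuity in $t$: a coefficient oscillating between $\nu_1$ and $\nu_2$ on every subinterval admits no uniformly close piecewise-constant-in-$t$ approximation, however fine the partition. (There is a secondary problem with ``conditioning on $\rF_{t_{m-1}}$ so that $\cL_m$ is genuinely non-random'': Theorem \ref{main result} is stated for deterministic operators, an $\rF_{t_{m-1}}$-measurable one is not, and making the conditioning rigorous is delicate since the solution on $(t_{m-1},t_m]$ also involves the noise after $t_{m-1}$.)

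The paper avoids any smallness. It fixes a single $\omega_0$, sets $L_0:=\cL(\omega_0,\cdot)$ and $\cL_\lambda:=(1-\lambda)L_0+\lambda\cL$ (which stays in $\cT_{\nu_1,\nu_2}$ by convexity of the ellipticity bounds, whereas your path through $\Delta$ needs $\nu_1\le 1\le\nu_2$ or a rescaling), and compares a solution $v$ of the $\cL_\lambda$-equation with the solution $u$, furnished by Theorem \ref{main result}, of the $L_0$-equation with the \emph{same} data $f^0,\tbf,g,u_0$. The difference $\bar v=v-u$ then solves, for each fixed $\omega$, a \emph{deterministic} divergence-form PDE (the stochastic terms cancel) with right-hand side $\sum_i\big(\sum_j[a^{ij}(\omega_0,t)-a^{ij}(\omega,t)]u_{x^j}\big)_{x^i}$, which is not small but is controlled by $\|u_x\|_{\bK^{\gamma+1}_{p,\theta,\Theta}}$ using only the bound $|a^{ij}|\le\nu_2$. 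Applying the $\omega$-wise deterministic estimate (Lemma \ref{main est} with $g=0$ together with Lemma \ref{regularity.induction}, whose constants are uniform over $\cT_{\nu_1,\nu_2}$ under \eqref{theta}) and then the estimate already known for $u$ gives \eqref{the a priori} with no partition and no absorption. If you replace your a priori step by this comparison argument, the remainder of your proposal (uniqueness from the a priori bound, existence by continuity starting from a solvable non-random operator) goes through essentially as the paper does it.
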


\begin{remark}
By Proposition \ref{critical exponents},  \eqref{theta} is fulfilled if 
\begin{equation}
\label{eqn 8.28.10}
p\left(\frac{d+2}{2}-\sqrt{\frac{\nu_1}{\nu_2}}\sqrt{\Lambda_{\cD}+\frac{(d-2)^2}{4}}\right)<\theta< p\left(\frac{d-2}{2}+\sqrt{\frac{\nu_1}{\nu_2}}\sqrt{\Lambda_{\cD}+\frac{(d-2)^2}{4}}\right).
\end{equation}
In the case of $L=\Delta$, by Proposition \ref{critical exponents},  \eqref{theta11} is fulfilled if 
$$
p\left(\frac{d}{2}-\sqrt{\Lambda_{\cD}+\frac{(d-2)^2}{4}}\right)<\theta< p\left(\frac{d}{2}+\sqrt{\Lambda_{\cD}+\frac{(d-2)^2}{4}}\right).
$$
\end{remark}

\begin{remark}
 By \eqref{space K norm},  inequality \eqref{main estimate}  yields \eqref{main estimate intro}.  In particular, if $\gamma=-1$ and $u(0,\cdot)\equiv 0$, then we have
\begin{eqnarray*}
&&\bE \int^T_0 \int_{\cD} \left(\vert\rho^{-1}u\vert ^p+\vert u_x\vert ^p\right) \rho_{\circ}^{\theta-\Theta}\rho^{\Theta-d}\, dx\,dt \nonumber
 \\
&\leq& C\, \bE
  \int^T_0 \int_{\cD} \Big( \vert \rho f^0\vert ^p+\sum_{i=1}^d \vert f^i\vert ^p+\vert g\vert ^p_{\ell_2}\Big)
 \rho_{\circ}^{\theta-\Theta}\rho^{\Theta-d}\, dx\,dt.
\end{eqnarray*}
\end{remark}

\begin{remark}
The solutions $u$ in Theorems \ref{main result}  and \ref{main result-random} satisfy zero Dirichlet boundary condition.  Indeed, under the assumption $d-1<\Theta<d-1+p$, \cite[Theorem 2.8]{doyoon} implies that the trace operator  is well defined for functions in $\bK^1_{p,\theta-p, \Theta-p}(\cD,T)$, and hence by Lemma \ref{property1} (iv)  we have $u\vert _{\partial \cD}=0$.
\end{remark}

Here comes  our  H\"older regularity properties of solutions  on conic domains. 

\begin{thm}[H\"older estimates on conic domains]
\label{cor 8.10}
Let $p\in [2,\infty)$, $\theta, \Theta\in \bR$, and $u\in \cK^{\gamma+2}_{p,\theta,\Theta}(\cD,T)$
  be the solution  taken from Theorem \ref{main result} (or  from Theorem \ref{main result-random}).

(i) If $\gamma+2-\frac{d}{p}\geq n+\delta$, where $n\in \bN_0$ and $\delta\in (0,1]$, then for any $0\le k\leq n$, 
$$
\vert \rho^{k-1+\frac{\Theta}{p}} \rho^{(\theta-\Theta)/p}_{\circ} D^{k}u(\omega,t,\cdot)\vert _{\cC(\cD)}+
 [\rho^{n-1+\delta+\frac{\Theta}{p}} \rho^{(\theta-\Theta)/p}_{\circ} D^{n}( \omega,t,\cdot)]_{\cC^{\delta}(\cD)}<\infty
$$
holds for a.e. $(\omega,t)$, in particular, 
\begin{equation}
\label{eqn 8.10.21}
\vert u(\omega,t,x)\vert \leq C(\omega,t) \rho^{1-\frac{\Theta}{p}}(x) \rho^{(-\theta+\Theta)/p}_{\circ}(x)\quad \text{for all }x\in\cD.
\end{equation}

(ii) Let
$$
2/p<\alpha<\beta\leq 1, \quad \gamma+2-\beta-d/p \geq m+\varepsilon,
$$
where $m\in \bN_0$ and $\varepsilon\in (0,1]$.  Put $\eta=\beta-1+\Theta/p$. Then for any $0\le k\leq m$,
\begin{align} \label{eqn 8.11.11}
&\bE \sup_{t\ne s\leq T} 
\frac
{\big\vert \rho^{\eta+k}  \rho^{(\theta-\Theta)/p}_{\circ} \big(D^ku(t,\cdot)-D^ku(s,\cdot)\big)\big\vert^p_{\cC(\cD)}}
{\vert t-s\vert ^{p\alpha/2-1}}<\infty, \\
& \bE \sup_{t\ne s\leq T} 
\frac
{\left[\rho^{\eta+m+\varepsilon} 
\rho^{(\theta-\Theta)/p}_{\circ} \left(D^mu(t,\cdot)-D^mu(s,\cdot)\right)\right]^p_{\cC^{\varepsilon}(\cD)}}
{\vert t-s\vert ^{p\alpha/2-1}} <\infty.  \label{eqn 8.11.21}
\end{align}

\end{thm}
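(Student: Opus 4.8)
The plan is to derive both parts directly from the embedding results already in place; in particular the statement does not use the equation or Theorems~\ref{main result}--\ref{main result-random} beyond the membership $u\in\cK^{\gamma+2}_{p,\theta,\Theta}(\cD,T)$, so it is really a corollary of Lemma~\ref{property1}(iii),(vi) together with Theorem~\ref{embedding}(i).

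For part (i), note first that $u\in\bK^{\gamma+2}_{p,\theta-p,\Theta-p}(\cD,T)=L_p(\Omega\times(0,T],\cP;K^{\gamma+2}_{p,\theta-p,\Theta-p}(\cD))$, so $u(\omega,t,\cdot)\in K^{\gamma+2}_{p,\theta-p,\Theta-p}(\cD)$ for a.e.\ $(\omega,t)$. Since $\gamma+2-\frac{d}{p}\ge n+\delta$, I would apply the Sobolev--H\"older embedding of Lemma~\ref{property1}(vi) with its parameter $\gamma$ taken to be $\gamma+2$ (and $\theta,\Theta,n,\delta$ as here) to the function $f=u(\omega,t,\cdot)$; this gives, for a.e.\ $(\omega,t)$,
\[
\sum_{k\le n}|\rho^{k-1+\frac{\Theta}{p}}\rho_\circ^{(\theta-\Theta)/p}D^ku(\omega,t,\cdot)|_{C(\cD)}+[\rho^{n-1+\delta+\frac{\Theta}{p}}\rho_\circ^{(\theta-\Theta)/p}D^nu(\omega,t,\cdot)]_{C^\delta(\cD)}\le N\|u(\omega,t,\cdot)\|_{K^{\gamma+2}_{p,\theta-p,\Theta-p}(\cD)}<\infty,
\]
and the $k=0$ summand is exactly \eqref{eqn 8.10.21}.

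For part (ii), I would combine Theorem~\ref{embedding}(i) with the same embedding applied to temporal increments. By Theorem~\ref{embedding}(i), applied with the prescribed $\alpha$ and $\beta$,
\[
\bE[\psi^{\beta-1}u]^p_{C^{\alpha/2-1/p}([0,T];K^{\gamma+2-\beta}_{p,\theta,\Theta}(\cD))}\le NT^{(\beta-\alpha)p/2}\|u\|^p_{\cK^{\gamma+2}_{p,\theta,\Theta}(\cD,T)}<\infty,
\]
so, passing to the time-continuous modification of $\psi^{\beta-1}u$ in $K^{\gamma+2-\beta}_{p,\theta,\Theta}(\cD)$, for a.e.\ $\omega$ and all $t\ne s$ one has $\|\psi^{\beta-1}(u(t,\cdot)-u(s,\cdot))\|_{K^{\gamma+2-\beta}_{p,\theta,\Theta}(\cD)}\le[\psi^{\beta-1}u]_{C^{\alpha/2-1/p}([0,T];K^{\gamma+2-\beta}_{p,\theta,\Theta}(\cD))}|t-s|^{\alpha/2-1/p}$. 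Fix $t\ne s$ and set $v=u(t,\cdot)-u(s,\cdot)$; by Lemma~\ref{property1}(iii) with $\mu=\beta-1$ we have $v\in K^{\gamma+2-\beta}_{p,\theta+(\beta-1)p,\Theta+(\beta-1)p}(\cD)$ with norm comparable to $\|\psi^{\beta-1}v\|_{K^{\gamma+2-\beta}_{p,\theta,\Theta}(\cD)}$. Since $\gamma+2-\beta-\frac{d}{p}\ge m+\varepsilon$, I would apply Lemma~\ref{property1}(vi) with its triple $(\gamma,\theta,\Theta)$ taken to be $(\gamma+2-\beta,\theta+\beta p,\Theta+\beta p)$ and $(n,\delta)=(m,\varepsilon)$, whose base space is precisely $K^{\gamma+2-\beta}_{p,\theta+(\beta-1)p,\Theta+(\beta-1)p}(\cD)$: the $\rho$-powers become $k-1+\frac{\Theta+\beta p}{p}=k+\eta$ and $m-1+\varepsilon+\frac{\Theta+\beta p}{p}=m+\varepsilon+\eta$ with $\eta=\beta-1+\Theta/p$, and the $\rho_\circ$-power is unchanged. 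This yields
\[
\sum_{k\le m}|\rho^{k+\eta}\rho_\circ^{(\theta-\Theta)/p}D^kv|_{C(\cD)}+[\rho^{m+\varepsilon+\eta}\rho_\circ^{(\theta-\Theta)/p}D^mv]_{C^\varepsilon(\cD)}\le N\|\psi^{\beta-1}v\|_{K^{\gamma+2-\beta}_{p,\theta,\Theta}(\cD)}.
\]
Substituting back $v=u(t,\cdot)-u(s,\cdot)$, dividing by $|t-s|^{\alpha/2-1/p}$, taking the supremum over $t\ne s$, raising to the power $p$ (recall $p(\alpha/2-1/p)=p\alpha/2-1$) and taking expectations, the left-hand side becomes that of \eqref{eqn 8.11.11}--\eqref{eqn 8.11.21} and the right-hand side is $\le N\,\bE[\psi^{\beta-1}u]^p_{C^{\alpha/2-1/p}([0,T];K^{\gamma+2-\beta}_{p,\theta,\Theta}(\cD))}<\infty$ by the first display above.

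The one step that needs care is the exponent bookkeeping in part (ii): one must check that removing the multiplier $\psi^{\beta-1}$ via Lemma~\ref{property1}(iii) (which shifts both weight indices by $(\beta-1)p$) combined with the index shift already built into Lemma~\ref{property1}(vi) (whose statement lives on a $K^{\gamma}_{p,\theta-p,\Theta-p}$) produces exactly the weight $\rho^{\eta+k}\rho_\circ^{(\theta-\Theta)/p}$ with $\eta=\beta-1+\Theta/p$; granting this, the rest is a mechanical concatenation of the cited results, and the measurability in $\omega$ of the suprema over $t\ne s$ is inherited from the time-continuous (in the appropriate $K$-space) modification of $\psi^{\beta-1}u$ furnished by Theorem~\ref{embedding}(i).
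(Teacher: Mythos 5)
Your proposal is correct and follows essentially the same route as the paper: part (i) is exactly the pointwise (in $(\omega,t)$) application of the embedding \eqref{eqn 8.21.1}, and part (ii) combines \eqref{eqn 8.10.10} with the equivalence $\|\psi^{\beta-1}v\|_{K^{\gamma+2-\beta}_{p,\theta,\Theta}(\cD)}\sim\|v\|_{K^{\gamma+2-\beta}_{p,\theta+\beta p-p,\Theta+\beta p-p}(\cD)}$ (Lemma~\ref{property1}(iii)) before applying \eqref{eqn 8.21.1} again, which is precisely the observation the paper's proof records. Your exponent bookkeeping yielding $\eta=\beta-1+\Theta/p$ is accurate, so no changes are needed.
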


\begin{proof}
(i) By definition, for almost all $(\omega, t)$, we have $u(\omega,t,\cdot)\in K^{\gamma+2}_{p,\theta-p,\Theta-p}(\cD)$. Thus (i) is a consequence of   \eqref{eqn 8.21.1}.  Similarly, the  claims  of (ii)  follow from  \eqref{eqn 8.21.1}  \eqref{eqn 8.10.10}, and the observation
\begin{eqnarray*}
&&\bE \sup_{t\ne s\le T} \frac{\|\psi^{\beta-1}(u(t)-u(s))\|^p_{K^{\gamma+2-\beta}_{p,\theta,\Theta}(\cD)}}{\vert t-s\vert ^{(\alpha/2-1/p)p}} \\
&\sim&
\bE \sup_{t\ne s\le T} \frac{\hspace{1cm}\|u(t)-u(s)\|^p_{K^{\gamma+2-\beta}_{p,\theta+\beta p-p,\Theta+\beta p-p}(\cD)}}{\vert t-s\vert ^{(\alpha/2-1/p)p}}.
\end{eqnarray*}
\end{proof}

\begin{remark}
(i)  \eqref{eqn 8.10.21} tells how fast the solution  from Theorem \ref{main result} (or  Theorem \ref{main result-random}) vanishes near the boundary. Near boundary points away from the vertex, $u$ is controlled by   $\rho^{1-\Theta/p}$and, if $p>\Theta$, the decay near the vertex is not slower than   $\rho^{1-\theta/p}_{\circ}$. 

(ii) In \eqref{eqn 8.11.11} and \eqref{eqn 8.11.21}, $\alpha/2-1/p$ is the  H\"older exponent in time and $\eta$ is related to the decay rate near the boundary.  
As  $\alpha/2-1/p\to 1/2-1/p$,   $\eta$ must  increase accordingly.

(iii) Suppose $\theta=d$ satisfies \eqref{eqn 8.28.10}, and  let $u\in \cK^{1}_{p,d,d}(\cD,T)$ be the solution from Theorem \ref{main result-random}. Assume
$$
\kappa_0:=1-\frac{(d+2)}{p}>0.
$$
Then for any $\kappa\in (0,\kappa_0)$, we have
\begin{equation}
  \label{eqn 8.11.12}
\bE \sup_{t\leq T} \sup_{x,y\in\cD} \Big\vert \frac{\vert u(t,x)-u(t,y)\vert }{\vert x-y\vert ^{\kappa}}\Big\vert^p + \bE \sup_{t\ne s\leq T}\sup_{x\in \cD}\Big\vert \frac{\vert u(t,x)-u(s,x)\vert }{\vert t-s\vert ^{\kappa/2}}\Big\vert^p <\infty.
\end{equation}
Indeed,  \eqref{eqn 8.11.12} can be  obtained from \eqref{eqn 8.11.11} and \eqref{eqn 8.11.21} with appropriate choices of 
$\alpha,\beta$. For the first part, to apply \eqref{eqn 8.11.21}  we  take $\beta=\kappa_0-\kappa+2/p$ such that $2/p<\beta<1$, and take $\varepsilon=1-\beta-d/p=\kappa=-\eta$. For the second part, we use \eqref{eqn 8.11.11} with $\alpha=\kappa+2/p, \beta=1-d/p$ so that $1-\alpha p/2=-p\kappa/2$.
\end{remark}

\mysection{Key  estimates on conic domains}

In this section we consider the solutions  to SPDEs having a non-random operator. We fix a deterministic operator  
\begin{equation}
\label{8.29.1}
L_0:=\sum_{i,j}\alpha^{ij}(t)D_{ij}\,\, \in \, \cT_{\nu_1,\nu_2}.
\end{equation}
See Definition \ref{lambda2}. 
We will estimate the zeroth order derivative of the solution of the equation
\begin{equation}\label{one event equation}
d u =\left( L_0 u+f^0+\sum_{i=1}^d f^i_{x^i}\right)dt +\sum^{\infty}_{k=1} g^kdw_t^k,\quad   t>0, \;x\in \cD(\cM).
\end{equation}

Let  $G(t,s,x,y)$   denote the Green's function for the operator $\partial_t-L_0$ on $\cD=\cD(\cM)$. By definition (cf. \cite[Lemma 3.7]{Kozlov Nazarov 2014}), $G$ is a nonnegative function such that  for any fixed $s\in \bR$ and $y\in\mathcal{D}$,  the function $v(t,x)=G(t,s,x,y)$ satisfies
\begin{align*}
&\big(\partial_t -L_0\big)v(t,x)=\delta(x-y)\delta(t-s) \quad  \text{in}\quad \bR \times \cD, \nonumber\\
& v(t,x)=0\quad \textrm{on}\quad  \bR \times\mathcal{\partial D} \; ; \quad  v(t,x)=0\quad \textrm{for} \quad t<s.
\end{align*}

Now, for any given
$$
 f^0\in \bL_{p,\theta+p,\Theta+p}(\cD,T),  \quad \textbf{f}=(f^1,\cdots,f^d)\in  \bL_{p,\theta,\Theta}(\cD,T,d), \quad
 $$
 $$
  g\in\bL_{p,\theta,\Theta}(\cD,T, \ell_2), \quad  u_0 \in L_p(\Omega; K^0_{\theta+2-p,\Theta+2-p}(\cD)),
 $$
we define  the function $\cR(u_0,f^0,\textbf{f},g)$ by
\begin{eqnarray}
&&\cR(u_0,f^0,\textbf{f},g)(t,x)\nonumber\\
&:=&\int_{\cD} G(t,0,x,y)u_0(y)dy\nonumber\\
&&+
\int^t_0\int_{\cD}G(t,s,x,y)f(s,y)dyds-\sum_{i=1}^d \int^t_0\int_{\cD}G_{y^i}(t,s,x,y)f^i(s,y)dyds\nonumber\\
&&+\sum_{k=1}^{\infty}\int^t_0\int_{\cD}G(t,s,x,y)g^k(s,y)dy\,dw^k_s.   \label{eqn 8.21.11}
\end{eqnarray}
One immediately notices that the function $\cR(u_0,f^0,\textbf{f},g)$  is a representation of a solution of \eqref{one event equation} with zero boundary condition and initial condition $u(0,\cdot)=u_0(\cdot)$; see Lemma \ref{lemma  rep} in the next section. 
Our main result of this section is about this representation and it is given in the following lemma. 
\begin{lemma}\label{main est}
Let $T<\infty$,  $p\in [2,\infty)$ and let $\theta\in\bR$, $\Theta\in\bR$ satisfy 
$$
p(1-\lambda^+_{c,L_0})<\theta<p(d-1+\lambda^-_{c,L_0})\quad\text{and}\quad d-1<\Theta<d-1+p.
$$
If $f^0\in \bL_{p,\theta+p,\Theta+p}(\cD,T)$, $\tbf\in  \bL^d_{p,\theta,\Theta}(\cD,T,d)$, $g\in\bL_{p,\theta,\Theta}(\cD,T, \ell_2)$, and $u_0\in  L_p(\Omega; K^0_{\theta+2-p,\Theta+2-p}(\cD)):=L_p(\Omega,\rF_0; K^0_{\theta+2-p,\Theta+2-p}(\cD))$, then  $u:=\cR(u_0,f^0,\tbf,g)$ belongs to  $\bL_{p,\theta-p,\Theta-p}(\cD,T)$ and  the estimate
 \begin{eqnarray*}
\|u\|_{\bL_{p,\theta-p,\Theta-p}(\cD,T)}&\leq& C \Big(\|f^0\|_{\bL_{p,\theta+p,\Theta+p}(\cD,T)} + \|\tbf\|_{\bL_{p,\theta,\Theta}(\cD,T,d)}\nonumber \\
&&\quad\quad+
\|g\|_{\bL_{p,\theta,\Theta}(\cD,T,\ell_2)} +\|u_0\|_{ L_p(\Omega; K^0_{\theta+2-p,\Theta+2-p}(\cD))}\Big)\nonumber\\
\end{eqnarray*}
holds, where  $C=C(\cM,d,p,\theta,\Theta,L_0)$.   Moreover, if 
\begin{equation*}
p\left(1-\lambda_c(\nu_1,\nu_2)\right)<\theta< p\left(d-1+ \lambda_c(\nu_1,\nu_2)\right),
\end{equation*}
then the constant $C$ depends only on $\cM,d,p,\theta,\Theta, \nu_1$ and $\nu_2$.
\end{lemma}

To prove Lemma~\ref{main est}, we use the following two results. Lemma \ref{lemma3.1} gathers rather technical but important inequalities we keep using in this section.

\begin{lemma}\label{lemma3.1}
 (i) Let $\alpha+\beta>0,\ \beta > 0$, and $\gamma>0$. Then  for any $a\geq b>0$  
 \begin{align*}
\int^{\infty}_0 \frac{1}{\left(a+\sqrt{t}\right)^{\alpha}\left(b+\sqrt{t}\right)^{\beta+\gamma}t^{1-\frac{\gamma}{2}}}dt \leq \frac{C}{a^\alpha b^\beta},
\end{align*}
where $C= C(\alpha,\beta,\gamma)$.

(ii) Let $\sigma>0,\ \alpha+\gamma>-d$, $\gamma>-1$ and $\beta,\ \nu \in \bR$. Then   for any $x\in\cD$, 
\begin{align*}
\int_{\mathcal{D}} \frac{\vert y\vert ^{\alpha}}{\left(\vert y\vert +1\right)^{\beta}}\frac{\rho(y)^{\gamma}}{\left(\rho(y)+1\right)^{\nu}}\ e^{-\sigma \vert x-y\vert ^2} dy \leq 
C \left(\vert x\vert +1\right)^{\alpha-\beta}\left(\rho(x)+1\right)^{\gamma-\nu},
\end{align*}
where $C=C(\cM,d, \alpha, \beta, \gamma,\nu,\sigma)$.
\end{lemma}
\begin{proof}
See Lemma 3.2 and Lemma 3.7 in  \cite{ConicPDE}.
\end{proof}
For the operator $L_0$,  we  take the constants $K_0, \lambda^+_{c,L_0}, \lambda^-_{c,L_0}$ and the operator $\hat{L}_0$ from Definition \ref{lambda}. 
\begin{lemma}\label{Green estimate}
 Let $\lambda^+\in \big(0,\lambda^+_{c,L_0}\big)$ and  $\lambda^-\in\big(0,\lambda^-_{c,L_0}\big)$. Denote
 $$
 K_0^+=K_0(L_0,\cM,\lambda^+), \quad K_0^-=K_0(\hat{L}_0,\cM,\lambda^-).$$
 Then,  there exist positive constants
  $C=C(\mathcal{M},\nu_1,\nu_2,\,\lambda^{\pm}, K_0^{\pm})$
and $\sigma=\sigma(\nu_1,\nu_2)$ such that for any $t>s$ and $x,y\in \mathcal{D}(\cM)$, the estimates
\begin{align*}
&(i)\quad G(t,s,x,y)\leq \frac{C}{(t-s)^{d/2}}J_{t-s,x}\,J_{t-s,y}\,R^{\lambda^+-1}_{t-s,x}\,R^{\lambda^--1}_{t-s,y}\,e^{-\sigma\frac{\vert x-y\vert ^2}{t-s}}\\
&(ii)\quad \big\vert \nabla_y G(t,s,x,y)\big\vert \leq \frac{C}{(t-s)^{(d+1)/2}}J_{t-s,x}\,R^{\lambda^+-1}_{t-s,x}\,R^{\lambda^--1}_{t-s,y}\,e^{-\sigma\frac{\vert x-y\vert ^2}{t-s}}
\end{align*}
hold, where
  $$
 R_{t,x}:=\frac{\rho_{\circ}(x)}{\rho_{\circ}(x)+\sqrt{t}},\quad J_{t,x}:=\frac{\rho(x)}{\rho(x)+\sqrt{t}}.
  $$
  In particular, if $\lambda^{\pm}\in (0,\lambda_c(\nu_1,\nu_2))$, then $C$ depends only on $\cM, \nu_1,\nu_2, \lambda^{\pm}$.
\end{lemma}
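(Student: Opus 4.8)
\emph{Proof proposal.}
The plan is to build the estimate from three essentially independent mechanisms and then to glue together the behavior in $x$ and in $y$. The mechanisms are: a global Gaussian (Aronson-type) bound for $\partial_t-L_0$; a linear boundary-vanishing estimate near the smooth part $\partial\cD\setminus\{0\}$, which produces the factors $J$; and the polynomial decay at the vertex furnished by the very definition of $\lambda^{\pm}_{c,L_0}$ (Definition \ref{lambda}), which produces the factors $R^{\lambda^{\pm}-1}$. The gluing is done through the Chapman--Kolmogorov identity for $G$, split at the midpoint of $[s,t]$, which decouples the dependence on $x$ from that on $y$.

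First I would record the crude estimate $G(t,s,x,y)\le N(t-s)^{-d/2}e^{-\sigma|x-y|^2/(t-s)}$ with $\sigma=\sigma(\nu_1,\nu_2)$. Because $\cD\subset\bR^d$ and $G$ carries zero Dirichlet data, this follows from the classical Aronson bound on $\bR^d$ together with the domain monotonicity $0\le G_{\cD}\le G_{\bR^d}$ (apply the maximum principle to $G_{\bR^d}-G_{\cD}$); here one uses that $L_0=\sum_{ij}a^{ij}(t)D_{ij}$ has $x$-independent coefficients, so that $\partial_t-L_0$ becomes the heat operator after a time-dependent linear change of the spatial variable. Next, after rescaling by $|x|$ the domain $\cD$ near a point of $\partial\cD\setminus\{0\}$ is a $C^2$ domain whose geometric constants are controlled by $\cM$; a barrier argument for the (transformed) heat equation, exploiting that $G(\cdot,s,\cdot,y)$ vanishes on $\bR\times\partial\cD$, upgrades the crude bound to $G(t,s,x,y)\le N\,J_{t-s,x}\,(t-s)^{-d/2}e^{-\sigma|x-y|^2/(t-s)}$, and the same argument in the variables $(s,y)$ — governed by the time-reversed operator $\hat L_0$ — supplies the factor $J_{t-s,y}$.

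To produce the vertex decay and to couple it correctly with the lateral decay I would use \eqref{eqn 8.17.10}. For fixed $(s,y)$ the function $v(t,x)=G(t,s,x,y)$ lies in $\mathcal{V}_{loc}(Q^{\cD}_R(t_0,0))$ and solves \eqref{eqn 8.17.14} with $L=L_0$ whenever $t_0-R^2>s$; taking $R\sim\sqrt{t-s}$ and $t_0\sim t$, \eqref{eqn 8.17.10} with any $\lambda^+<\lambda^+_{c,L_0}$ supplies, for $|x|\lesssim\sqrt{t-s}$, a factor $(|x|/\sqrt{t-s})^{\lambda^+}$ times $\sup_{Q^{\cD}_{3R/4}(t_0,0)}|G(\cdot,s,\cdot,y)|$, into which one feeds the Gaussian-plus-$J$ bound of the previous step; combining this with a barrier estimate at scale $|x|$ that provides the additional lateral factor $\rho(x)/|x|$, and patching with the range $|x|\gtrsim\sqrt{t-s}$ (where $R_{t-s,x}\sim1$), yields $J_{t-s,x}R^{\lambda^+-1}_{t-s,x}$ uniformly in $x$; the symmetric step for $\hat L_0$ with $\lambda^-<\lambda^-_{c,L_0}$ yields $J_{t-s,y}R^{\lambda^--1}_{t-s,y}$. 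Finally I would multiply the ``$x$-side'' and ``$y$-side'' estimates through the identity
\[
G(t,s,x,y)=\int_{\cD}G(t,\tau,x,z)\,G(\tau,s,z,y)\,dz,\qquad \tau=\tfrac{t+s}{2},
\]
and carry out the $z$-integral, after a parabolic rescaling, with Lemma \ref{lemma3.1} (ii); since $t-\tau=\tau-s=\tfrac{t-s}{2}$ the $x$- and $y$-factors are reproduced up to comparability constants and the two Gaussians reconvolve to a Gaussian in $x-y$, giving (i). For the gradient bound (ii) I would additionally use interior and boundary gradient estimates for the transformed heat equation — available since the coefficients are $x$-independent — applied to $G(\tau,s,\cdot,y)$ in $y$; differentiating under the integral in the same splitting, and noting that differentiation in $y$ costs one $J$-factor in $y$ while paying an extra $(t-s)^{-1/2}$, produces (ii).

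The step I expect to be the main obstacle is the coupling in these last two estimates: one must choose the parabolic scales so that the critical-exponent bound \eqref{eqn 8.17.10} and the lateral barrier overlap on a common region and patch with no loss of exponent, and then verify that the $z$-integral of the product of two four-factor kernels is again a four-factor kernel with \emph{precisely} the stated exponents — in particular that the shift by $1$ in $R^{\lambda^{\pm}-1}_{t-s,\cdot}$ is the correct one (it is forced by the half-space case, where $\lambda^{\pm}_c=1$ and no vertex factor may survive). A secondary point is the uniformity of the constants: tracking how $K_0$ enters through Definition \ref{lambda} and Proposition \ref{critical exponents} shows that, when $\lambda^{\pm}\in(0,\lambda_c(\nu_1,\nu_2))$, all constants may be taken to depend only on $\cM,\nu_1,\nu_2$ and $\lambda^{\pm}$, as asserted.
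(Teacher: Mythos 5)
The paper does not prove this lemma from first principles: part (i) is quoted verbatim as inequality (2.8) of the companion paper \cite{Green}, and part (ii) is obtained by applying the gradient-in-$x$ estimate (2.14) of \cite{Green} to the time-reversed kernel $\hat{G}(t,s,x,y)=G(-s,-t,y,x)$ --- which is the Green's function of $\partial_t-\hat{L}_0$ --- together with the identity $\nabla_y G(t,s,x,y)=\nabla_x\hat{G}(-s,-t,y,x)$ (note how this automatically swaps the roles of $\lambda^+$ and $\lambda^-$, consistently with Definition \ref{lambda}(ii)). You instead sketch a self-contained derivation. Your overall architecture --- crude Gaussian bound by domain monotonicity, lateral factors $J$ from barriers near the smooth part of $\partial\cD$, vertex factors $R^{\lambda^\pm-1}$ from \eqref{eqn 8.17.10}, one-sided estimates in $x$ and in $y$ (the latter via $\hat{L}_0$) glued by the Chapman--Kolmogorov identity at the midpoint --- is indeed the standard route to such estimates and is essentially what \cite{Green} and \cite{Kozlov Nazarov 2014} carry out; the identity $J_{t,x}R_{t,x}^{\lambda-1}\sim(\rho(x)/|x|)\,(|x|/\sqrt{t})^{\lambda}$ for $|x|\lesssim\sqrt{t}$ shows your two mechanisms combine to exactly the stated four-factor kernel, and the midpoint splitting with $t-\tau=\tau-s$ does reproduce the $R$- and $J$-factors up to constants.

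Two points in your justification are genuinely shaky and would need repair. First, the reduction ``$\partial_t-L_0$ becomes the heat operator after a time-dependent linear change of the spatial variable'' is only usable on $\bR^d$ (where it gives the explicit Gaussian with covariance $\int_s^t a(r)\,dr$); on $\cD$ that change of variables turns the fixed cone into a time-dependent domain, so neither the boundary barrier nor the critical-exponent machinery can be run ``for the transformed heat equation.'' They must be run directly for the operator with merely measurable-in-$t$ coefficients, using De Giorgi--Nash--Moser type local boundedness and boundary H\"older estimates rather than explicit kernels; this is precisely the technical content outsourced to \cite{Green}. Second, when you feed the crude Gaussian bound into \eqref{eqn 8.17.10} you take the supremum of $|G(\cdot,s,\cdot,y)|$ over $Q^{\cD}_{3R/4}(t_0,0)$, which ranges over earlier times $t'$; to keep the factor $e^{-\sigma'|x-y|^2/(t-s)}$ you must choose $R^2\le c(t-s)$ with $c$ small so that $t'-s\sim t-s$ on the whole cylinder, and then patch with the regime $|x|\gtrsim\sqrt{t-s}$. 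You correctly flag this coupling as the main obstacle, but as written it is an acknowledged gap rather than a proof. Since the lemma is an external input here, the cleanest fix is simply to cite \cite{Green} as the paper does, reserving the time-reversal identity as the only argument you actually need to supply for (ii).
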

\begin{proof}
(i) See inequality (2.8) in \cite{Green}. 

(ii) Denote $\hat{G}(t,s,x,y)=G(-s,-t,y,x)$. Then $\hat{G}$ is  the Green's function of the operator $\partial_t-\hat{L}_0$, 
where $\hat{L}_0:=\sum_{i,j}\alpha^{ij}(-t)D_{ij}$.  Then by inequality (2.14) of \cite{Green} applied to $\hat{G}$,  for any $\lambda^+\in(0,\lambda_c^+)$ and $\lambda^-\in(0,\lambda_c^-)$, there exist constant $C, \sigma>0$, with the dependencies prescribed in the lemma,  such that
\begin{align*}
\vert \nabla_x \hat{G}(t,s,x,y)\vert \leq \frac{C}{(t-s)^{(d+1)/2}}J_{t-s,y}R^{\lambda^--1}_{t-s,x}R^{\lambda^+-1}_{t-s,y}e^{-\sigma\frac{\vert x-y\vert ^2}{t-s}}
\end{align*}
for any $t>s$ and $x,\,y\in\cD$.  This and  the fact $\nabla_y G(t,s,x,y)=\nabla_x \hat{G}(-s,-t,y,x)$ prove (ii).
\end{proof}

Since $\cR(u_0,f^0,\tbf,g)=\cR(u_0,0,0,0)+\cR(0,f^0,\tbf,0)+\cR(0,0,0,g)$ with $0$ as zero functions in their corresponding function spaces, we will treat these three parts  separately in  following three lemmas and then combine them to obtain the claim of Lemma \ref{main est}.
Especially, the stochastic part $\cR(0,0,0,g)$ is important in this article and  elaborated thoroughly in Lemma \ref{main est2}.

\begin{lemma}\label{main est3}
Let $p\in(1,\infty)$, and let $\theta\in\bR$, $\Theta\in\bR$ satisfy
$$
p(1-\lambda^+_{c,L_0})<\theta<p(d-1+\lambda^-_{c,L_0})\quad\text{and}\quad d-1<\Theta<d-1+p.
$$
If $u_0\in L_p(\Omega; K^0_{\theta+2-p,\Theta+2-p}(\cD))$, then $u=\cR(u_0,0,0,0)$ belongs to $\bL_{p,\theta-p,\Theta-p}(\cD,T)$ and
\begin{align*}
\|u\|_{\bL_{p,\theta-p,\Theta-p}(\cD,T)}\leq C \|u_0\|_{L_p(\Omega; K^0_{\theta+2-p,\Theta+2-p}(\cD))}
\end{align*}
holds, where  $C=C(\cM,d,p,\theta,\Theta,L_0)$. Moreover, if
\begin{equation}\label{theta range restricted}
p\big(1-\lambda_c(\nu_1,\nu_2)\big)<\theta<p\big(d-1+\lambda_{c}(\nu_1,\nu_2)\big),
\end{equation}
then the constant $C$ depends only on $\cM,d,p,\theta,\Theta,\nu_1$ and $\nu_2$.
\end{lemma}

\begin{proof}
Green's function itself is not random. Hence, recalling  the definitions of $\cR(u_0,0,0,0)$ and $\bL=\bK^0$,   for simplicity we may assume that $u_0$ and hence $u$ are non-random and we just prove
\begin{align}
\int^T_0\int_{\cD}\vert \rho^{-1}u\vert ^p\rho_{\circ}^{\theta-\Theta}\rho^{\Theta-d}dxdt\leq N\int_{\cD}\vert \rho^{-1+\frac{2}{p}}\,u_0\vert ^p\rho_{\circ}^{\theta-\Theta}\rho^{\Theta-d}dx.
\label{main inequality3}
\end{align}

{\bf 1}. Let us denote $\mu:=-1+(\theta-d+2)/p$, $\alpha:=-1+(\Theta-d+2)/p$, and
$$
h(x):=\rho_{\circ}(x)^{\mu-\alpha}\rho(x)^{\alpha}u_0(x).
$$
Then the claimed estimate \eqref{main inequality3} turns into a simpler form of
\begin{equation}\label{main inequality 3}
\Big\|\rho_{\circ}^{\mu-\alpha}\rho^{\alpha-\frac{2}{p}}u\Big\|_{L_p\left([0,T]\times \mathcal{D}\right)}\le N \|h\|_{L_p\left(\mathcal{D}\right)}.
\end{equation}

On the other hand, by the range of $\theta$ given in the condition, we can always find $\lambda^+\in\big(0,\lambda^+_{c,L_0}\big)$ and $\lambda^-\in\big(0,\lambda^-_{c,L_0}\big)$ satisfying
\begin{align}
-\frac{d-2}{p}-\lambda^+<\mu<\frac{d-2}{p}+\lambda^-.\label{inequality mu1}
\end{align}
Also, by the given range of $\Theta$  we have
\begin{align}
-1+\frac{1}{p}<\alpha<\frac{1}{p}.\label{inequality alpha1}
\end{align}
Hence, we can choose and fix the constants $\gamma$, $\beta$ satisfying
\begin{align*}
0<\gamma<\lambda^-+\frac{d-2}{p}-\mu\,,\,\quad 0<\beta<\frac{1}{p}-\alpha.
\end{align*}
Noting  $\frac{d-2}{p}<d-\frac{d}{p}$, $\frac1p<2-\frac1p$ which is due to condition $p\in(1,\infty)$,  we then have 
\begin{align}
0<\gamma<\lambda^-+d-\frac{d}{p}-\mu\,,\,\quad 0<\beta<2-\frac{1}{p}-\alpha.\label{gamma beta chosen 1}
\end{align}

Moreover, as $\lambda^+\in (0,\lambda^+_{c,L_0})$ and $\lambda^-\in (0,\lambda^-_{c,L_0})$, by Lemma~\ref{Green estimate} there exist constants $C=C(\cM,L_0,\nu_1,\nu_2, \lambda^{\pm}),\,\sigma=\sigma(\nu_1,\nu_2)>0$  such that
\begin{align}\label{20.09.15}
\nonumber G(t,0,x,y)&\leq C\,t^{-\frac{d}{2}}\,R^{\lambda^+-1}_{t,x}R^{\lambda^- -1}_{t,y}\,J_{t,x} J_{t,y}\,e^{-\sigma\frac{\vert x-y\vert^2}{t}}\\
& = C\,t^{-\frac{d}{2}}\,R^{\lambda^+-1}_{t,x}J_{t,x}\,R^{\gamma}_{t,y}\left(\frac{J_{t,y}}{R_{t,y}}\right)^{\beta}\, R^{\lambda^--\gamma}_{t,y}\left(\frac{J_{t,y}}{R_{t,y}}\right)^{1-\beta}\,e^{-\sigma\frac{\vert x-y\vert^2}{t}}
\end{align}
holds for all $t>s$ and $x,y\in\cD$. Let us prove estimate \eqref{main inequality 3}.

{\bf 2}. Using H\"older inequality and \eqref{20.09.15}, we have
\begin{align*}
\vert u(t,x)\vert&=\Big\vert \int_{\mathcal{D}}G(t,0,x,y)u_0(y)dy\Big\vert \\
&\leq \int_{\mathcal{D}}G(t,0,x,y)\vert y\vert ^{-\mu+\alpha}\rho(y)^{-\alpha}\vert h(y)\vert dy\\
&\leq C\cdot I_1(t,x)\cdot I_2(t,x),
\end{align*}
where $q=p/(p-1)$; $\frac1p+\frac1q=1$,
$$
I_1(t,x)= \left( \int_{\mathcal{D}}t^{-\frac{d}{2}}\,e^{-\sigma\frac{\vert x-y\vert ^2}{t}}\cdot R_{t,x}^{(\lambda^+-1)p}J_{t,x}^{\,p}\cdot  K_{1}(t,y)\cdot \vert h(y)\vert ^pdy\right)^{1/p},
$$
and
$$
I_2(t,x)=\left(\int_{\mathcal{D}}t^{-\frac{d}{2}}\,e^{-\sigma\frac{\vert x-y\vert ^2}{t}}\cdot K_{2}(t,y)\cdot \vert y\vert ^{(-\mu+\alpha)q}\rho^{-\alpha q}(y)dy\right)^{1/q}
$$
with
$$
K_{1}(t,y) =R^{\gamma p}_{t,y}\left(\frac{J_{t,y}}{R_{t,y}}\right)^{\beta p},\quad \, K_{2}(t,y)=R^{(\lambda^--\gamma)q}_{t,y}\left(\frac{J_{t,y}}{R_{t,y}}\right)^{(1-\beta)q}.
$$

 
{\bf 3}. We show that there exists a constant $C$ depending only on $\cM, d,p,\theta,\Theta, \nu_1, \nu_2$ and $\lambda^{-}$ such that 
\begin{align*}
I_2(t,x)\leq  C \left(\vert x\vert +\sqrt{t}\right)^{-\mu+\alpha}\left(\rho(x)+\sqrt{t}\right)^{-\alpha}.
\end{align*}
This is done by Lemma~\ref{lemma3.1} (ii). Indeed, by change of variables $y/\sqrt{t}\to y$ and the fact $\rho(y)/\sqrt{t}=\rho(y/\sqrt{t})$, we have
\begin{align*}
I_2^q(t,x)&=t^{-\frac{d}{2}}\int_{\mathcal{D}}e^{-\sigma\frac{\vert x-y\vert ^2}{t}}K_{2}(t,y)\vert y\vert ^{(-\mu+\alpha)q}\vert \rho(y)\vert ^{-\alpha q}dy\\
&=t^{-\mu q/2}\int_{\mathcal{D}}e^{-\sigma\vert \frac{x}{\sqrt{t}}-y\vert ^2}\frac{\vert y\vert ^{(\lambda^--\mu-\gamma-1+\alpha+\beta)q}}{(\vert y\vert +1)^{(\lambda^--\gamma-1+\beta)q}}\cdot\frac{\rho(y)^{(1-\alpha-\beta)q}}{(\rho(y)+1)^{(1-\beta)q}}dy,
\end{align*}
for which we can apply Lemma~\ref{lemma3.1} since \eqref{gamma beta chosen 1} implies $(\lambda^--\mu-\gamma)q>-d$ and $(1-\alpha-\beta)q>-1$. Thus we get constant $C=C(\cM,d,p,\theta,\Theta, \lambda^-,\sigma)$ such that
$$
I_2^q(t,x) \leq  C \left(\vert x\vert +\sqrt{t}\right)^{(-\mu+\alpha)q}\left(\rho(x)+\sqrt{t}\right)^{-\alpha q}
$$
holds for all $t,x$.

{\bf 4}. To prove estimate \eqref{main inequality 3}, by Step 3 we first note 
\begin{align*}
\vert x\vert ^{\mu-\alpha}\rho(x)^{\alpha-\frac{2}{p}}\cdot \vert u(t,x)\vert &\leq  C\, \vert x\vert ^{\mu-\alpha}\rho(x)^{\alpha-\frac{2}{p}} \cdot I_1(t,x)\cdot I_2(t,x)\\
& \leq C\,\rho(x)^{-2/p}R^{\,\mu-\alpha}_{t,x} J^{\,\alpha}_{t,x}\cdot I_1(t,x)
\end{align*}
for any $t,x$. Using this and Fubini's Theorem, we have
\begin{align*}
\|\rho_{\circ}^{\mu-\alpha}\rho^{\alpha-\frac2p}u\|^p_{L_p([0,T]\times\mathcal{D})}&\leq C \int_0^T\int_{\mathcal{D}}\vert \rho(x)\vert ^{-2}\Big(R^{\,\mu-\alpha}_{t,x} J^{\,\alpha}_{t,x}\, I_1(t,x) \Big)^p\,dxdt\\
&=C\int_{\mathcal{D}}I_3(y)\cdot \vert h(y)\vert ^pdy,
\end{align*}
where
\begin{align*}
I_3(y)&=\int^T_0t^{-\frac{d}{2}}K_{1}(t,y)\left(\int_{\mathcal{D}}e^{-\sigma\frac{\vert x-y\vert ^2}{t}}\, R^{\,(\lambda^++\mu-\alpha-1)p}_{t,x} J^{\,(\alpha+1) p}_{t,x}\rho(x)^{-2}\,dx\right)dt.
\end{align*}

Since \eqref{inequality mu1} and \eqref{inequality alpha1} imply $(\lambda^++\mu) p-2>-d$ and $(\alpha+1) p-2>-1$, by change of variables $x/\sqrt{t}\to x$, the fact $\rho(x)/\sqrt{t}=\rho(x/\sqrt{t})$, and Lemma~\ref{lemma3.1} (ii), we have
\begin{align*}
I_3(y)&=\int^T_0\frac{1}{t}K_{1}(t,y)\, \int_{\mathcal{D}}e^{-\sigma\vert x-\frac{y}{\sqrt{t}}\vert ^2}\frac{\vert x\vert ^{(\lambda^++\mu-\alpha-1)p}}{(\vert x\vert +1)^{(\lambda^++\mu-\alpha-1)p}}\frac{\rho(x)^{(\alpha+1) p-2}}{(\rho(x)+1)^{(\alpha+1) p}}\,dx\,dt\\
&\leq C\int_0^{\infty}K_{1}(t,y)\left(\rho(y)+\sqrt{t}\right)^{-2}dt\\
&= C\int_0^{\infty}\frac{\vert y\vert ^{(\gamma-\beta)p}}{\left(\vert y\vert +\sqrt{t}\right)^{(\gamma-\beta)p}}\cdot\frac{\rho(y)^{\beta p}}{\left(\rho(y)+\sqrt{t}\right)^{\beta p+2}}dt.
\end{align*}
Lastly, owing to $\gamma p>0$, $\beta p> 0$, and the fact $|y|\ge \rho(y)$ in $\cD$, we can apply Lemma~\ref{lemma3.1} (i) and we obtain
\begin{align*}
I_3(y)\leq C(\cM,d,p,\theta,\Theta,\nu_1,\nu_2, \lambda^{\pm}).
\label{Last constant}
\end{align*}
Hence, there exists a constant $C$ having the dependency described in the lemma such that 
\begin{align*}
\left\|\rho_{\circ}^{\mu-\alpha}\rho^{\alpha-\frac2p}u\right\|^p_{L_p([0,T]\times\mathcal{D})}\leq C \|h\|^p_{L_p(\mathcal{D})}.
\end{align*}
Estimate \eqref{main inequality 3} and the lemma are proved.

{\bf 5}. When $\theta$ obeys \eqref{theta range restricted},  we choose $\lambda^{\pm}$ in the interval $(0,\lambda_c(\nu_1,\nu_2))$. Then the constant $C$ of Green's function estimates in Lemma \ref{Green estimate} depends only on $\cM,\nu_1,\nu_2,\lambda^{\pm}$.  Therefore, in particular, constant $C$ in 
\eqref {20.09.15} does not depend on $L_0$.  Tracking the constants down through Steps 1, 2, 3, 4, we note that the constant in \eqref{main inequality 3} does not depend on the particular operator $L_0$. Rather, it depends on $\nu_1,\, \nu_2$ and hence $C=C(\cM,d,p,\theta,\Theta,\nu_1,\nu_2)$.  
\end{proof}
\begin{remark}\label{initial.p ge 2.}
For $\gamma\ge 0$, $\|u\|_{L_p(\bR^d)}\le \|u\|_{H^{\gamma}_p(\bR^d)}$ is a basic property of the space of Bessel potentials. This with Lemma \ref{property1} and Definition \ref{defn 8.19},  in the context of Lemma \ref{main est3}, yields 
\begin{equation*}
\|u_0\|_{L_p(\Omega; K^0_{\theta+2-p,\Theta+2-p}(\cD))}\le \|u_0\|_{L_p(\Omega; K^{1-2/p}_{\theta+2-p,\Theta+2-p}(\cD))}=\|u_0\|_{\bU^{1}_{p,\theta,\Theta}(\cD)}
\end{equation*}
if $p\ge 2$.
\end{remark}

\begin{lemma}\label{main est1}
Let $p\in (1,\infty)$ and let $\theta\in\bR$, $\Theta\in\bR$ satisfy 
$$
p\big(1-\lambda^+_{c,L_0}\big)<\theta<p\big(d-1+\lambda^-_{c,L_0}\big)\quad\text{and}\quad d-1<\Theta<d-1+p.
$$
If $f^0\in \bL_{p,\theta+p,\Theta+p}(\cD,T)$, $\tbf\in  \bL^d_{p,\theta,\Theta}(\cD,T,d)$, then $u:=\cR(0,f^0,\tbf,0)$ belongs to  $\bL_{p,\theta-p,\Theta-p}(\cD,T)$ and the estimate  
 \begin{eqnarray*}
\|u\|_{\bL_{p,\theta-p,\Theta-p}(\cD,T)}\leq C \big(\|f^0\|_{\bL_{p,\theta+p,\Theta+p}(\cD,T)}+ \|\tbf\|_{\bL_{p,\theta,\Theta}(\cD,T,d)}\big)
\end{eqnarray*}
holds, where  $C=C(\cM,d,p,\theta,\Theta,L_0)$.   Moreover, if 
\begin{equation*}
p\left(1-\lambda_c(\nu_1,\nu_2)\right)<\theta< p\left(d-1+ \lambda_c(\nu_1,\nu_2)\right),
\end{equation*}
then the constant $C$ depends only on $\cM,d,p,\theta,\Theta, \nu_1$ and $\nu_2$.
\end{lemma}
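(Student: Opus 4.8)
The plan is to imitate the proof of Lemma~\ref{main est3}, now with two inhomogeneous source terms and an extra time integration, the gradient bound Lemma~\ref{Green estimate}(ii) entering through the $f^i$ term. Since $G$ and $G_{y^i}$ are non-random and $u:=\cR(0,f^0,\tbf,0)$ depends linearly on $(f^0,\tbf)$, it suffices to prove, for a.e.\ fixed $\omega$ with $f^0,\tbf$ deterministic, the estimate
\begin{equation*}
\int_0^T\!\!\int_{\cD}|\rho^{-1}u|^p\rho_{\circ}^{\theta-\Theta}\rho^{\Theta-d}\,dx\,dt \le N\Big(\int_0^T\!\!\int_{\cD}|\rho f^0|^p\rho_{\circ}^{\theta-\Theta}\rho^{\Theta-d}\,dx\,dt+\sum_{i=1}^d\int_0^T\!\!\int_{\cD}|f^i|^p\rho_{\circ}^{\theta-\Theta}\rho^{\Theta-d}\,dx\,dt\Big)
\end{equation*}
and then integrate over $\Omega$. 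Writing $u=u_1+u_2$ with $u_1:=\cR(0,f^0,0,0)$ and $u_2:=\cR(0,0,\tbf,0)$, I would bound the two summands separately, in each case factoring out $\rho_{\circ}^{(\theta-\Theta)/p}$ and suitable powers of $\rho$ to reduce to an unweighted $L_p([0,T]\times\cD)$ bound of the same shape as in Step~1 of the proof of Lemma~\ref{main est3}.

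For $u_1(t,x)=\int_0^t\!\int_{\cD}G(t,s,x,y)f^0(s,y)\,dy\,ds$ I would run the scheme of Lemma~\ref{main est3} essentially verbatim: fix $\lambda^{+}\in(0,\lambda^+_{c,L_0})$, $\lambda^{-}\in(0,\lambda^-_{c,L_0})$ and auxiliary exponents $\gamma,\beta>0$; split the factors $R_{t-s,y}^{\lambda^--1}$ and $J_{t-s,y}$ appearing in Lemma~\ref{Green estimate}(i) according to $\gamma,\beta$; apply H\"older's inequality jointly in $(s,y)$ to isolate a ``main'' factor carrying $|f^0|^p$ with the correct weight from a ``dual'' factor whose $\int_{\cD}dy$ is controlled by Lemma~\ref{lemma3.1}(ii); then Fubini, another application of Lemma~\ref{lemma3.1}(ii) for the $\int_{\cD}dx$, and finally a pure time integral $\int_0^{T}(\,\cdots\,)\,d\tau\le\int_0^{\infty}(\,\cdots\,)\,d\tau$ which converges by Lemma~\ref{lemma3.1}(i). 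The admissibility of the exponents in these two applications of Lemma~\ref{lemma3.1} is precisely what the hypotheses $p(1-\lambda^+_{c,L_0})<\theta<p(d-1+\lambda^-_{c,L_0})$ and $d-1<\Theta<d-1+p$ guarantee (the range of $\Theta$ giving, as in Step~3 there, the strict two-sided bound on the relevant power of $\rho$). The only genuine difference from Lemma~\ref{main est3} is the additional $ds$-integration, which merely shifts a power of $t-s$ and is absorbed without trouble.

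For $u_2(t,x)=-\sum_{i=1}^d\int_0^t\!\int_{\cD}G_{y^i}(t,s,x,y)f^i(s,y)\,dy\,ds$ the same argument applies with Lemma~\ref{Green estimate}(ii) in place of (i); since $f^i$ is only assumed integrable against the weight, the derivative must remain on the Green function. The kernel now carries an extra $(t-s)^{-1/2}$ and is missing the factor $J_{t-s,y}$ on the $y$-variable, which is consistent with $f^i$ being weighted by $\rho^{\Theta-d}$ rather than $\rho^{\Theta+p-d}$ (one fewer power of $\rho$, exactly the weight cost of a derivative, cf.\ Lemma~\ref{property1}(v)), so the $\rho$-bookkeeping rebalances. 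The extra half-power of the time singularity is accommodated by adjusting the splitting exponents $\gamma,\beta$ so that the resulting spatial and temporal integrals still fall under Lemma~\ref{lemma3.1}(ii) and (i) respectively. \textbf{The main obstacle is exactly this last step}: one must distribute, via the free parameters $\gamma,\beta$, the worse time singularity of $\nabla_y G$ between the spatial integral (governed by Lemma~\ref{lemma3.1}(ii)) and the temporal integral (governed by Lemma~\ref{lemma3.1}(i)) so that \emph{both} converge for the full sharp ranges of $\theta$ and $\Theta$; this is the computation that forces the precise weight $\rho^{\Theta-d}$ on the $f^i$'s and uses the strict inequalities in the hypotheses in full.

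Finally, when $\theta$ lies in the restricted range $p(1-\lambda_c(\nu_1,\nu_2))<\theta<p(d-1+\lambda_c(\nu_1,\nu_2))$, one chooses $\lambda^{\pm}\in(0,\lambda_c(\nu_1,\nu_2))$; by the last assertion of Lemma~\ref{Green estimate} the constants in both Green-function bounds then depend only on $\cM,\nu_1,\nu_2,\lambda^{\pm}$, and tracking them through the two estimates above yields $N=N(\cM,d,p,\theta,\Theta,\nu_1,\nu_2)$, exactly as in Step~5 of the proof of Lemma~\ref{main est3}.
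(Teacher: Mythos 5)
Your proposal is correct and follows essentially the same route as the paper: reduce to non-random data, split $\cR(0,f^0,\tbf,0)$ into the $f^0$ and $\tbf$ contributions, and for the latter run the H\"older-splitting scheme of Lemma~\ref{main est3} with the gradient bound of Lemma~\ref{Green estimate}(ii), choosing the auxiliary exponents so that Lemma~\ref{lemma3.1}(ii) and (i) apply under the stated ranges of $\theta$ and $\Theta$. The only cosmetic difference is that the paper does not redo the $f^0$ computation but cites its deterministic companion \cite[Lemma 3.1]{ConicPDE}, whereas you propose to carry it out inline by the same template.
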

\begin{proof}
By the same reason explained in the beginning of the proof of  Lemma \ref{main est3},  we can  assume $f^0, \tbf$, and hence $u$ are non-random and we just prove
\begin{align}
\label{main est 2.1}
 \int_0^T  \int_{\cD}  \vert \rho^{-1}u\vert ^p  \rho_0^{\theta-\Theta} \rho^{\theta-d} dxdt  
\leq C  \int_0^T  \int_{\cD}  \left(\vert \rho\,f\vert ^p  +   \vert \tbf\vert ^p  \right)\rho_0^{\theta-\Theta}  \rho^{\theta-d}  dxdt .
\end{align}
Furthermore, when $\tbf=0$  estimate \eqref{main est 2.1} is already proved in \cite[Lemma 3.1]{ConicPDE}, the deterministic counterpart of this article.
Hence, we may assume $f^0=0$. Finally, for simplicity we further assume $f^2=\cdots=f^d=0$.

{\bf 1}. We denote $\mu:=(\theta-d)/p$ and $\alpha:=(\Theta-d)/p$ and set
$$
h(t,x)=\rho_{\circ}^{\mu-\alpha}(x)\rho^{\alpha}(x) f^1(t,x).
$$
Then \eqref{main est 2.1} turns into
\begin{equation}\label{main inequality 2-220830}
\Big\|\rho_0^{\mu-\alpha}\rho^{\alpha-1}u\Big\|_{L_p\left([0,T]\times \mathcal{D}\right)}\le C \|h\|_{L_p\left([0,T]\times \mathcal{D}\right)}.
\end{equation}

We prepare a few things as we did in Step 1 of the proof of Lemma \ref{main est3}. By the range of $\theta$ given in the statement, we can find $\lambda^+\in(0,\lambda^+_{c,L_0})$ and $\lambda^-\in(0,\lambda^-_{c,L_0})$ satisfying
\begin{align*}
1-\frac{d}{p}-\lambda^+<\mu<d-1-\frac{d}{p}+\lambda^-.
\end{align*}
Also, by the range of $\Theta$ given we have
\begin{align*}
-\frac{1}{p}<\alpha<1-\frac{1}{p}.
\end{align*}
Then we can choose and fix the constants $\gamma_1$, $\gamma_2$, $\beta_1$ and $\beta_2$ satisfying
\begin{align}
-\frac{d-1}{p} < \gamma_1 < \lambda^+ - 1 + \mu+\frac{1}{p},&\qquad 0<\gamma_2<\lambda^-+d-1-\frac{d}{p}-\mu\nonumber\\
0<\beta_1<\alpha+\frac{1}{p},\qquad\qquad&\qquad 0<\beta_2<1-\frac{1}{p}-\alpha.\label{inequality gamma beta2}
\end{align}
Moreover, since $\lambda^+\in (0,\lambda^+_c)$ and $\lambda^-\in (0,\lambda^-_c)$, by Lemma~\ref{Green  estimate} there exist constants $C=C(\cM,L_0,\nu_1, \nu_2, \lambda^{\pm}),\,\sigma=\sigma(\nu_1,\nu_2)>0$ such that
\begin{align}
\vert \nabla_y G(t,s,x,y)\vert &\leq \frac{C}{(t-s)^{(d+1)/2}}e^{-\sigma\frac{\vert x-y\vert ^2}{t-s}}\,J_{t-s,x}R^{\lambda^+-1}_{t-s,x}R^{\lambda^--1}_{t-s,y} \nonumber\\
&=\frac{C}{(t-s)^{(d+1)/2}}e^{-\sigma\frac{\vert x-y\vert ^2}{t-s}}R^{\gamma_1}_{t-s,x}\left(\frac{J_{t-s,x}}{R_{t-s,x}}\right)^{\beta_1}R^{\gamma_2}_{t-s,y}\left(\frac{J_{t-s,y}}{R_{t-s,y}}\right)^{\beta_2} \nonumber\\
&\qquad \times R^{\lambda^+-\gamma_1}_{t-s,x}\left(\frac{J_{t-s,x}}{R_{t-s,x}}\right)^{1-\beta_1}R^{\lambda^--1-\gamma_2}_{t-s,y}
\left(\frac{J_{t-s,y}}{R_{t-s,y}}\right)^{-\beta_2}  \label{deriv}
\end{align}
holds for all $t>s$ and $x,y\in\cD$. Now, we start proving \eqref{main inequality 2-220830}.

{\bf 2}.
By  H\"older inequality and \eqref{deriv}, we have
\begin{align}
\vert u(t,x)\vert &=\Big\vert \int^t_0\int_{\mathcal{D}}G_{y^1}(t,s,x,y)f^1(s,y)dyds\Big\vert  \nonumber\\
&\leq \int^t_0\int_{\mathcal{D}}\vert \nabla_y G(t,s,x,y)\vert \cdot\vert y\vert ^{-\mu+\alpha}\rho(y)^{-\alpha}\vert h(s,y)\vert dyds \nonumber\\
&\leq C\, I_1(t,x)\cdot I_2(t,x),  \label{I12}
\end{align}
where $q=p/(p-1)$,
\begin{align*}
&I_1(t,x)\\
&=\left(\int^t_0\int_{\mathcal{D}}\frac{1}{(t-s)^{(d+1)/2}}\,e^{-\sigma\frac{\vert x-y\vert ^2}{t-s}}K_{1,1}(t-s,x)K_{1,2}(t-s,y)\vert h(s,y)\vert ^pdyds\right)^{1/p}
\end{align*}
and
\begin{align*}
&I_2(t,x)\\
&=\left(\int^t_0\int_{\mathcal{D}}\frac{1}{(t-s)^{(d+1)/2}}\,e^{-\sigma\frac{\vert x-y\vert ^2}{t-s}}K_{2,1}(t-s,x)K_{2,2}(t-s,y)\vert y\vert ^{(-\mu+\alpha)q}\rho^{\alpha q}(y)dyds\right)^{1/q}
\end{align*}
with
\begin{align*}
K_{1,1}(t,x)=R^{\gamma_1p}_{t,x}\left(\frac{J_{t,x}}{R_{t,x}}\right)^{\beta_1p},\quad &
 K_{1,2}(t,y)=R^{\gamma_2 p}_{t,y}\left(\frac{J_{t,y}}{R_{t,y}}\right)^{\beta_2 p},\\
K_{2,1}(t,x)=R^{(\lambda^+-\gamma_1)q}_{t,x}\left(\frac{J_{t,x}}{R_{t,x}}\right)^{(1-\beta_1)q},\quad 
&K_{2,2}(t,y)=R^{(\lambda^--1-\gamma_2)q}_{t,y}\left(\frac{J_{t,y}}{R_{t,y}}\right)^{-\beta_2q}.
\end{align*}
 
{\bf 3}.  We   show that there exists a constant $C=C(\cM,d,p,\theta,\Theta,\nu_1,\nu_2)>0$  such that
\begin{equation}\label{I2}
I_2(t,x)\leq C \vert x\vert ^{-\mu+\alpha}\rho(x)^{-\alpha+\frac{1}{q}}
\end{equation}
holds for all $t,x$; we note that the right hand side is independent of $t$.

First, by change of variables $y/\sqrt{t-s}\to y$ and Lemma~\ref{lemma3.1} (ii), which we can apply since \eqref{inequality gamma beta2} gives $(\lambda^--1-\mu-\gamma_2)q>-d$ and $(-\alpha-\beta_2)q>-1$,  we have
\begin{align*}
&\quad\frac{1}{(t-s)^{(d+1)/2}}\int_{\mathcal{D}}e^{-\sigma\frac{\vert x-y\vert ^2}{t-s}}K_{2,2}(t-s,y)\vert y\vert ^{(-\mu+\alpha)q}\vert \rho(y)\vert ^{-\alpha q}dy\\
&=(t-s)^{-(\mu q+1)/2}\int_{\mathcal{D}}e^{-\sigma\vert \frac{x}{\sqrt{t-s}}-y\vert ^2}\frac{\vert y\vert ^{(\lambda^--\mu-1-\gamma_2+\alpha+\beta_2)q}}{(\vert y\vert +1)^{(\lambda^--1-\gamma_2+
\beta_2)q}}\cdot\frac{\rho(y)^{(-\alpha-\beta_2)q}}{(\rho(y)+1)^{-\beta_2 q}}dy\\
&\leq  C (t-s)^{-1/2}\left(\vert x\vert +\sqrt{t-s}\right)^{(-\mu+\alpha)q}\left(\rho(x)+\sqrt{t-s}\right)^{-\alpha q},
\end{align*}
where $C=C(\cM,d,p,\theta,\Theta,\nu_1,\nu_2)$. Using this, we have
\begin{align*}
&I_{2}^q(t,x)\\
&\leq C\int^t_0K_{2,1}(t-s,x)\cdot(t-s)^{-1/2}\left(\vert x\vert +\sqrt{t-s}\right)^{(-\mu+\alpha)q}\left(\rho(x)+\sqrt{t-s}\right)^{-\alpha q}ds\\
&\le C\int^t_{-\infty}\frac{\vert x\vert ^{(\lambda^+-1-\gamma_1+\beta_1)q}}{(\vert x\vert +\sqrt{t-s})^{(\lambda^+-1+\mu-\alpha-\gamma_1+\beta_1)q}}\cdot\frac{\rho(x)^{(1-\beta_1)q}}{(\rho(x)+\sqrt{t-s})^{(1+\alpha-\beta_1)q}}\cdot \frac{1}{(t-s)^{1/2}}ds.
\end{align*}
Then, the change of variable $t-s\to s$ and  Lemma~\ref{lemma3.1}  (i), which we can apply since we have $(\lambda^++\mu-\gamma_1)q >1 $ and $(1+\alpha-\beta_1)q> 1$ from \eqref{inequality gamma beta2},  we further obtain
\begin{align*}
I_2^q(t,x)&\leq C \vert x\vert ^{(-\mu+\alpha)q}\rho(x)^{-\alpha q+1},
\end{align*}
which is equivalent to \eqref{I2}.

{\bf 4}. Now, by \eqref{I2} and   \eqref{I12}  we have
\begin{align*}
\vert u(t,x)\vert \leq C\, I_1(t,x)\cdot I_2(t,x)\leq C\, \vert x\vert ^{-\mu+\alpha}\rho(x)^{-\alpha+\frac{1}{q}}\, I_1(t,x)
\end{align*}
and hence
\begin{align*}
\|\rho_0^{\mu-\alpha}\rho^{\alpha-1}u\|^p_{L_p([0,T]\times\mathcal{D})}&\leq C \int_0^T\int_{\mathcal{D}}\vert \rho(x)\vert ^{-1}I_1^p(t,x)\,dxdt\\
&=C\int_0^T\int_{\mathcal{D}}I_3(s,y)\cdot \vert h(s,y)\vert ^pdyds,
\end{align*}
where
\begin{align*}
I_3(s,y)=\int^T_s\int_{\mathcal{D}}\frac{1}{(t-s)^{(d+1)/2}}e^{-\sigma\frac{\vert x-y\vert ^2}{t-s}}K_{1,1}(t-s,x)K_{1,2}(t-s,y)\rho(x)^{-1}\,dxdt.
\end{align*}
 By change of variables $t-s\to t$ followed by $x/\sqrt{t}\to x$ and Lemma~\ref{lemma3.1} (ii) with $\gamma_1p-1>-d$ and $\beta_1p-1>-1$ from \eqref{inequality gamma beta2}, we have
\begin{align*}
I_3(s,y)&=\int^T_s\frac{1}{(t-s)^{(d+1)/2}}K_{1,2}(t-s,y)\left(\int_{\mathcal{D}}e^{-\sigma\frac{\vert x-y\vert ^2}{t-s}}K_{1,1}(t-s,x)\rho(x)^{-1}\,dx\right)dt\\
&\leq \int^{\infty}_0\frac{1}{t}K_{1,2}(t,y)\left(\int_{\mathcal{D}}\frac{\vert x\vert ^{(\gamma_1-\beta_1)p}}{(\vert x\vert +1)^{(\gamma_1-\beta_1)p}}\frac{\rho(x)^{\beta_1p-1}}{(\rho(x)+1)^{\beta_1p}}e^{-\sigma\vert x-\frac{y}{\sqrt{t}}\vert ^2}\,dx\right)dt\\
&\leq C\int_0^{\infty}K_{1,2}(t,y)\left(\rho(y)+\sqrt{t}\right)^{-1}t^{-1/2}dt\\
&= C\int_0^{\infty}\frac{\vert y\vert ^{(\gamma_2-\beta_2)p}}{\left(\vert y\vert +\sqrt{t}\right)^{(\gamma_2-\beta_2)p}}\cdot\frac{\rho(y)^{\beta_2p}}{\left(\rho(y)+\sqrt{t}\right)^{\beta_2p+1}}\cdot\frac{1}{t^{1/2}}dt.
\end{align*}
Lastly, due to $\gamma_2p>0$ and $\nu_2p> 0$, Lemma~\ref{lemma3.1} (i)  yields
\begin{align*}
I_3(s,y)\leq C(\cM,d,p,\theta,\Theta, \nu_1,\nu_2).
\end{align*}
Hence, there exists a constant $C$ having the dependency described in the lemma  such that 
\begin{align*}
\left\|\rho_{\circ}^{\mu-\alpha}\rho^{\alpha-1}u\right\|^p_{L_p([0,T]\times\mathcal{D})}\leq C \|h\|^p_{L_p([0,T]\times\mathcal{D})}.
\end{align*} 
\eqref{main inequality 2-220830} and the lemma are proved. 

{\bf 5}.  The last part of the claim related to the range of $\theta$ holds by the same reason explained in Step 5 of the proof of Lemma \ref{main est3}.
\end{proof}
Now, we move on to the stochastic part, the most important and involved one.
\begin{lemma}\label{main est2}
Let $p\in [2,\infty)$ and let $\theta\in\bR$, $\Theta\in\bR$ satisfy 
$$
p(1-\lambda^+_{c,L_0})<\theta<p(d-1+\lambda^-_{c,L_0})\quad\text{and}\quad d-1<\Theta<d-1+p.
$$
If $g\in\bL_{p,\theta,\Theta}(\cD,T, \ell_2)$, then $u:=\cR(0,0,0,g)$ belongs to  $\bL_{p,\theta-p,\Theta-p}(\cD,T)$ and  the estimate
 \begin{eqnarray}\label{main inequality2}
\|u\|_{\bL_{p,\theta-p,\Theta-p}(\cD,T)}\leq C \|g\|_{\bL_{p,\theta,\Theta}(\cD,T,\ell_2)}
\end{eqnarray}
holds, where  $C=C(\cM,d,p,\theta,\Theta,L_0)$.   Moreover, if
\begin{equation*}
p\big(1-\lambda_c(\nu_1,\nu_2)\big)<\theta< p\big(d-1+ \lambda_c(\nu_1,\nu_2)\big),
\end{equation*}
then the constant $C$ depends only on $\cM,d,p,\theta,\Theta, \nu_1$, and $\nu_2$.
\end{lemma}

\begin{proof}

{\bf 1}. Again, we denote $\mu:=(\theta-d)/p$ and $\alpha:=(\Theta-d)/p$.
We put $h(\omega,t,x)=\rho_{\circ}^{\mu-\alpha}(x) \rho(x)^\alpha g(\omega,t,x)$ and recall
$$\Omega_T=\Omega \times (0,T], \quad L_p(\Omega_T \times \cD):=L_p(\Omega_T\times \cD, d\bP dt dx).
$$
   Then \eqref{main inequality2} is the same as
\begin{equation}\label{main inequality 2}
\big\|\rho_{\circ}^{\mu-\alpha}\rho^{\alpha-1}u\big\|^p_{L_p(\Omega_T\times\mathcal{D})}\le C\,\big\|\vert h\vert_{\ell_2}\big\|^p_{L_p\left(\Omega_T\times \mathcal{D}\right)}.
\end{equation}

 As we did in the proof of Lemma \ref{main est1}, we prepare few things.  By the range of $\theta$ given, we can find constants $\lambda^+\in(0,\lambda^+_{c,L_0})$ and $\lambda^-\in(0,\lambda^-_{c,L_0})$ satisfying
\begin{align*}
1-\frac{d}{p}-\lambda^+<\mu<d-\frac{d}{p}+\lambda^-.
\end{align*}
Also, by the range of $\Theta$ we have
\begin{align*}
-\frac{1}{p}<\alpha<1-\frac{1}{p}.
\end{align*}
Then we can choose and fix the constants $\gamma_1$, $\gamma_2$, $\beta_1$, and $\beta_2$ satisfying
\begin{align}
-\frac{d-2}{p} < \gamma_1 < \lambda^+ - 1 + \mu + \frac{2}{p},&\qquad 0<\gamma_2<\lambda^-+d-\frac{d}{p}-\mu\nonumber\\
\frac{1}{p}<\beta_1<\alpha+\frac{2}{p},\qquad\qquad & \qquad 0<\beta_2<2-\frac{1}{p}-\alpha.\label{inequality gamma beta3}
\end{align}

 Further, by Lemma \ref{Green estimate} there exist constants $C=C(\cM,L_0,\nu_1, \nu_2, \lambda^{\pm}),\,\sigma=\sigma(\nu_1,\nu_2)>0>0$ such that for any $t>s$ and $x,y\in\cD$,
\begin{align}
   \nonumber
G(t,s,x,y)\leq& \frac{C}{(t-s)^{d/2}}e^{-\sigma\frac{\vert x-y\vert^2}{t-s}} \,J_{t-s,x}\, J_{t-s,y}\,R^{\lambda^+-1}_{t-s,x}\,R^{\lambda^--1}_{t-s,y}\\   \nonumber
 =&C\, (t-s)^{-d/2}e^{-\sigma\frac{\vert x-y\vert ^2}{t-s}} R^{\gamma_1}_{t-s,x}\left(\frac{J_{t-s,x}}{R_{t-s,x}}\right)^{\beta_1}
R^{\gamma_2}_{t-s,y}\left(\frac{J_{t-s,y}}{R_{t-s,y}}\right)^{\beta_2}\\
& \times   R^{\lambda^+-\gamma_1}_{t-s,x}\left(\frac{J_{t-s,x}}{R_{t-s,x}}\right)^{1-\beta_1}
R^{\lambda^--\gamma_2}_{t-s,y}\left(\frac{J_{t-s,y}}{R_{t-s,y}}\right)^{1-\beta_2} \label{eqn 8.7.3}
\end{align}
holds.

{\bf 2}.  We first estimate the $p$-th moment  $\bE|u(t,x)|^p$ for any given $t$ and $x$. Using Burkholder-Davis-Gundy inequality and Minkowski's integral inequality, we have
\begin{align*}
\bE\vert u(t,x)\vert ^p&=\bE\Big\vert \sum_{k\in\bN}\int^t_0\int_{\mathcal{D}}G(t,s,x,y)g^k(s,y)dydw^k_s\Big\vert ^p\\
&\leq C\bE\left(\int_0^t\sum_{k\in\bN}\left(\int_{\cD}G(t,s,x,y)g^k(s,y)dy\right)^2 ds \right)^{p/2}\\
&\leq C\bE\left(\int_0^t\left(\int_{\cD}G(t,s,x,y)\vert g(s,y)\vert _{\ell_2}dy\right)^2 ds \right)^{p/2}\\
&=C\bE\left(\int_0^t\left(\int_{\cD}G(t,s,x,y)\vert y\vert ^{-\mu+\alpha}\rho(y)^{-\alpha}\vert h(s,y)\vert _{\ell_2}dy\right)^2 ds \right)^{p/2}.
\end{align*}
We denote
$$
I(\omega,t,x):=\left(\int_0^t\left(\int_{\cD}G(t,s,x,y)\vert y\vert ^{-\mu+\alpha}\rho(y)^{-\alpha}\vert h(\omega,s,y)\vert _{\ell_2}dy\right)^2 ds \right)^{1/2}.
$$
Then,  using  \eqref{eqn 8.7.3} and applying H\"older inequality twice for $x$ and then $t$, we get

\begin{align} 
I(\omega,t,x)&\leq C\left(\int_0^t\Big(\int_{\cD}I_1\cdot I_2\;dy\Big)^2ds\right)^{1/2}  \label{eqn 8.7.7}\\
&\leq C\|I_1(\omega,t,\cdot,x,\cdot)\|_{L_p((0,t)\times \cD, ds\,dy)}\Big\|\|I_2(t,\cdot,x,\cdot)\|_{L_{q}(\cD,dy)}\Big\|_{L_{r}((0,t),ds)}  \nonumber
\end{align}
where $q=\frac{p}{p-1}$, $r=\frac{2p}{p-2}\,(=\infty\text{ if }p=2)$,
\begin{align}
&I_1^p(\omega,t,s,x,y)    \label{I_1^p}\\
&= (t-s)^{-d/2}e^{-\sigma\frac{\vert x-y\vert ^2}{t-s}} \left(R^{\gamma_1}_{t-s,x}\left(\frac{J_{t-s,x}}{R_{t-s,x}}\right)^{\beta_1}R^{\gamma_2}_{t-s,y}\left(\frac{J_{t-s,y}}{R_{t-s,y}}\right)^{\beta_2}\right)^p\vert h(\omega,s,y)\vert _{\ell_2}^p   \nonumber \\
&= (t-s)^{-d/2}e^{-\sigma\frac{\vert x-y\vert ^2}{t-s}} \,\,K_{1,1}(t-s,x)\,K_{1,2}(t-s,y)\,\,\vert h(\omega,s,y)\vert _{\ell_2}^p,\label{I_1^p} \nonumber
\end{align}
and
\begin{align*}
&I_2^q(t,s,x,y)\\
&= (t-s)^{-d/2}e^{-\sigma\frac{\vert x-y\vert ^2}{t-s}}\\
&\quad  \times \left(R^{\lambda_1-\gamma_1}_{t-s,x}\left(\frac{J_{t-s,x}}{R_{t-s,x}}\right)^{1-\beta_1}R^{\lambda_2-\gamma_2}_{t-s,y}\left(\frac{J_{t-s,y}}{R_{t-s,y}}\right)^{1-\beta_2}\right)^q\vert y\vert ^{(-\mu+\alpha)q}\rho(y)^{-\alpha q}\\
&= (t-s)^{-d/2}e^{-\sigma\frac{\vert x-y\vert ^2}{t-s}} \,\,K_{2,1}(t-s,x)\,K_{2,2}(t-s,y)\,\,\vert y\vert ^{(-\mu+\alpha)q}\rho(y)^{-\alpha q},
\end{align*}
with
\begin{align*}
&K_{1,1}(t,x)=R^{\gamma_1p}_{t,x}\left(\frac{J_{t,x}}{R_{t,x}}\right)^{\beta_1p},\quad 
K_{1,2}(t,y)=R^{\gamma_2 p}_{t,y}\left(\frac{J_{t,y}}{R_{t,y}}\right)^{\beta_2 p},\\
&K_{2,1}(t,x)=R^{(\lambda^+-\gamma_1)q}_{t,x}\left(\frac{J_{t,x}}{R_{t,x}}\right)^{(1-\beta_1)q},\quad
K_{2,2}(t,y)=R^{(\lambda^--\gamma_2)q}_{t,y}\left(\frac{J_{t,y}}{R_{t,y}}\right)^{(1-\beta_2)q}.
\end{align*}

Note, by \eqref{eqn 8.7.7} we have
\begin{eqnarray}
      \label{eqn 8.7.8}
\bE \vert u(t,x)\vert ^p &\leq& C \bE I^p(t,x)  \\
&\leq& C \Big\|\|I_2(t,\cdot,x,\cdot)\|_{L_{q}(\cD,dy)}\Big\|^p_{L_{r}((0,t),ds)}
\bE\|I_1(\omega,t,\cdot,x,\cdot)\|^p_{L_p((0,t)\times \cD, ds\,dy)}. \nonumber
\end{eqnarray}

{\bf 3}. In this step we will show that there exists a constant $C=C(\cM,d,p,\theta,\Theta,\nu_1,\nu_2)>0$ such that
\begin{equation}\label{I_2}
\big\|\|I_2(t,\cdot,x,\cdot)\|_{L_{q}(\cD,dy)}\big\|_{L_{r}((0,t),ds)}\leq C \vert x\vert ^{-\mu+\alpha}\rho(x)^{-\alpha+1-2/p}.
\end{equation}
In particular, the right hand side  is independent of $t$.

{\bf Case 1.} Assume $p=2$ (hence, $q=2$ and $r=\infty$). First, we consider
\begin{align*}
&\int_{\cD}I_2^2(t,s,x,y) \,dy\\
&=K_{2,1}(t-s,x)\cdot\frac{1}{(t-s)^{d/2}}\int_{\mathcal{D}}e^{-\sigma\frac{\vert x-y\vert ^2}{t-s}}K_{2,2}(t-s,y)\vert y\vert ^{2(-\mu+\alpha)}\vert \rho(y)\vert ^{-2\alpha}\,dy.
\end{align*}
Since $2(\lambda^--\mu-\gamma_2)>-d$ and $2(1-\alpha-\beta_2)>-1$ from \eqref{inequality gamma beta3}, by change of variables $y/\sqrt{t-s}\to y$ and Lemma~\ref{lemma3.1} (ii), we have
\begin{align*}
&\frac{1}{(t-s)^{d/2}}\int_{\mathcal{D}}e^{-\sigma\frac{\vert x-y\vert ^2}{t-s}}K_{2,2}(t-s,y)\vert y\vert ^{2(-\mu+\alpha)}\vert \rho(y)\vert ^{-2\alpha}\,dy\\
&=(t-s)^{-\mu}\int_{\mathcal{D}}e^{-\sigma\vert \frac{x}{\sqrt{t-s}}-y\vert ^2}\frac{\vert y\vert ^{2(\lambda^--\mu-\gamma_2-1+\alpha+\beta_2)}}{(\vert y\vert +1)^{2(\lambda^--\gamma_2-1+\beta_2)}}\cdot\frac{\rho(y)^{2(1-\alpha-\beta_2)}}{(\rho(y)+1)^{2(1-\beta_2)}}dy\\
&\leq C\left(\vert x\vert +\sqrt{t-s}\right)^{2(-\mu+\alpha)}\left(\rho(x)+\sqrt{t-s}\right)^{-2\alpha}.
\end{align*}
Hence, we have
\begin{align*}
&\sup_{s\in[0,t]}\left(\int_{\cD}I_{2}^2\,dy\right)^{1/2}\\
&\leq C\sup_{s\in[0,t]}\left(K_{2,1}(t-s,x)\cdot\left(\vert x\vert +\sqrt{t-s}\right)^{2(-\mu+\alpha)}\left(\rho(x)+\sqrt{t-s}\right)^{-2\alpha}\right)^{1/2}\\
&= C\sup_{s\in[0,t]}\left(\frac{\vert x\vert ^{\lambda^+-1-\gamma_1+\beta_1}}{(\vert x\vert +\sqrt{t-s})^{\lambda^+-1+\mu-\gamma_1-\alpha+\beta_1}}\cdot\frac{\rho(x)^{1-\beta_1}}{(\rho(x)+\sqrt{t-s})^{\alpha+1-\beta_1}}\right)\\
&=C\,\vert x\vert ^{-\mu+\alpha}\rho(x)^{-\alpha}\sup_{s\in[0,t]}\left(R_{t-s,x}^{\lambda^+-1+\mu-\gamma_1}\left(\frac{J_{t-s,x}}{R_{t-s,x}}\right)^{\alpha+1-\beta_1}\right)\\
&\leq C \,\vert x\vert ^{-\mu+\alpha}\rho(x)^{-\alpha}
\end{align*}
due to $\lambda^+-1+\mu-\gamma_1>0$, $\alpha+1-\beta_1>0$ and 
$0\leq J_{t-s,x}\leq R_{t-s,x}\leq 1$. Thus \eqref{I_2} holds.

{\bf Case 2.} Let $p>2$.
Again, since $(\lambda^--\mu-\gamma_2)q>-d$ and $(1-\alpha-\beta_2)q>-1$, by change of variables and Lemma~\ref{lemma3.1} (ii), we observe
\begin{align*}
&\frac{1}{(t-s)^{d/2}}\int_{\mathcal{D}}e^{-\sigma\frac{\vert x-y\vert ^2}{t-s}}K_{2,2}(t-s,y)\vert y\vert ^{(-\mu+\alpha)q}\rho(y)^{-\alpha q}dy\\
=&\,(t-s)^{-\mu q/2}\int_{\mathcal{D}}e^{-\sigma\vert \frac{x}{\sqrt{t-s}}-y\vert ^2}\frac{\vert y\vert ^{(\lambda^--\mu-\gamma_2-1+\alpha+\beta_2)q}}{(\vert y\vert +1)^{(\lambda^--\gamma_2-1+\beta_2)q}}\cdot\frac{\rho(y)^{(1-\alpha-\beta_2)q}}{(\rho(y)+1)^{(1-\beta_2)q}}dy\\
\leq&\,C \left(\vert x\vert +\sqrt{t-s}\right)^{(-\mu+\alpha)q}\left(\rho(x)+\sqrt{t-s}\right)^{-\alpha q}.
\end{align*}
Hence, we have
\begin{align*}
&\int_0^t\|I_2(t,s,x,\cdot)\|_{L_{q}(\cD,dy)}^r ds\\
\leq &\,C \int^t_0\left\{K_{2,1}(t-s,x)\cdot\left(\vert x\vert +\sqrt{t-s}\right)^{(-\mu+\alpha)q}\left(\rho(x)+\sqrt{t-s}\right)^{-\alpha q}\right\}^{r/q}ds\\
=&\,C \int^t_0\frac{\vert x\vert ^{(\lambda^+-1-\gamma_1+\beta_1)r}}{(\vert x\vert +\sqrt{t-s})^{(\lambda^+-1+\mu-\gamma_1-\alpha+\beta_1)r}}\cdot\frac{\rho(x)^{(1-\beta_1)r}}{(\rho(x)+\sqrt{t-s})^{(\alpha+1-\beta_1)r}}ds\,.
\end{align*}
Moreover, since \eqref{inequality gamma beta3} also gives $\left(\lambda^++\mu-\gamma_1\right)r>2$ and $\left(\alpha+1-\beta_1\right)r>2$, 
using Lemma~\ref{lemma3.1} we again obtain
\begin{align*}
\big\|\|I_2(t,\cdot,x,\cdot)\|_{L_{q}(dy;\cD)}\big\|_{L_{r}((0,t),ds)}&=\left(\int_0^t\|I_2\|_{L_{q}(\cD,dy)}^r ds\right)^{1/r}\\
&\leq C\vert x\vert ^{-\mu+\alpha}\rho(x)^{-\alpha+1-2/p}.
\end{align*}

{\bf 4}.  Now,  by \eqref{eqn 8.7.8} and \eqref{I_2} we have
\begin{align*}
&\quad\bE\,\big\vert \vert x\vert ^{\mu-\alpha}\rho(x)^{\alpha-1}u(t,x)\big\vert ^p\\
&\leq C\big(\vert x\vert ^{\mu-\alpha}\rho(x)^{\alpha-1}\big)^p \cdot \bE\,\int_0^t\int_{\cD}I_1^p(t,s,x,y) dy\,ds\cdot \big(\vert x\vert ^{-\mu+\alpha}\rho(x)^{-\alpha+1-2/p}\big)^p\\
&= C\,\rho(x)^{-2}\;\bE\,\int_0^t\int_{\cD}I_1^p(t,s,x,y) dy\,ds.
\end{align*}
Therefore, taking integrations with respect to $x$ and $t$, using Fubini theorem and recalling \eqref{I_1^p}, we have
\begin{align}
\nonumber
\bE\,\|\rho_0^{\mu-\alpha}\rho^{\alpha-1}u\|^p_{L_p(\Omega_T\times\mathcal{D})}&\leq C \,\bE\int_0^T\int_{\mathcal{D}}\int^t_0\int_{\cD}\vert \rho(x)\vert ^{-2}I_1^p\,dyds\;dxdt\\
&=C\,\bE\,\int_0^T\int_{\mathcal{D}}I_3(s,y)\cdot \vert h(s,y)\vert _{\ell_2}^pdyds,   \label{eqn 8.11.31}
\end{align}
where
\begin{align*}
I_3(s,y):=\int^T_s\int_{\mathcal{D}}\frac{1}{(t-s)^{d/2}}e^{-\sigma\frac{\vert x-y\vert ^2}{t-s}}K_{1,1}(t,s,x,y)K_{1,2}(t,s,x,y)\rho(x)^{-2}\,dxdt.
\end{align*}
Since \eqref{inequality gamma beta3} also implies $\gamma_1p-2>-d$ and $\beta_1p-2>-1$, by change of variables $T-t\to t$ followed by $x/\sqrt{t}\to t$ and Lemma~\ref{lemma3.1} (ii),  we have
\begin{align*}
I_3(s,y)&=\int^T_s\frac{1}{(t-s)^{d/2}}K_{1,2}(t-s,y)\left(\int_{\mathcal{D}}e^{-\sigma\frac{\vert x-y\vert ^2}{t-s}}K_{1,1}(t-s,x)\rho(x)^{-2}\,dx\right)dt\\
&\leq \int^{\infty}_0\frac{1}{t}K_{1,2}(t,y)\left(\int_{\mathcal{D}}\frac{\vert x\vert ^{(\gamma_1-\beta_1)p}}{(\vert x\vert +1)^{(\gamma_1-\beta_1)p}}\frac{\rho(x)^{\beta_1p-2}}{(\rho(x)+1)^{\beta_1p}}e^{-\sigma'\vert x-\frac{y}{\sqrt{t}}\vert ^2}\,dx\right)dt\\
&\leq C\int_0^{\infty}K_{1,2}(t,y)\left(\rho(y)+\sqrt{t}\right)^{-2}dt\\
&= C\int_0^{\infty}\frac{\vert y\vert ^{(\gamma_2-\beta_2)p}}{\left(\vert y\vert +\sqrt{t}\right)^{(\gamma_2-\beta_2)p}}\cdot\frac{\rho(y)^{\beta_2p}}{\left(\rho(y)+\sqrt{t}\right)^{\beta_2p+2}}\,dt.
\end{align*}
Hence, by Lemma~\ref{lemma3.1} (i) with the conditions $\gamma_2p>0$ and $\beta_2p> 0$, we finally get 
\begin{align*}
I_3(s,y)\leq C(\cM,d,p,\theta,\Theta, \nu_1,\nu_2).
\end{align*}
This and \eqref{eqn 8.11.31} lead to  \eqref{main inequality 2} and the lemma is proved.

{\bf 5}.  Again, the last part of the claim related to the range of $\theta$ holds by the same reason explained in Step 5 of the proof of Lemma \ref{main est3}.
\end{proof}

\mysection{Proof of Theorems \ref{main result} and \ref{main result-random}}\label{sec:main proofs}
In this section we prove Theorems \ref{main result} and \ref{main result-random}, following the strategy below: 

\vspace{2mm}

{\bf 1}. \emph{ A priori estimate and the uniqueness}:

\begin{itemize}

\item[-] 
  In Lemma \ref{lemma entire} below, we first prove that for any   solution $u\in \cK^{\gamma+2}_{p,\theta,\Theta}(\cD,T)$ to equation \eqref{stochastic parabolic equation} equipped with the general operator $\cL=\sum_{i,j=1}^d a^{ij}(\omega,t)D_{ij}$, we have
\begin{eqnarray}
& \|u\|_{\cK^{\gamma+2}_{p,\theta,\Theta}(\cD,T)}
\leq C \big( \|u\|_{\bL_{p,\theta-p,\Theta-p}(\cD,T)} +\text{norms of the free terms} \big). \label{eqn 8.18.21}
\end{eqnarray}

\item[-]    If  $\cL$ is non-random, we   estimate $\|u\|_{\bL_{p,\theta-p,\Theta-p}(\cD,T)}$ based on  Lemma \ref{main est}. 

\item[-] To treat the SPDE with random coefficients, we introduce a SPDE having non-random coefficients and the same free terms $f^0$, $\tbf$, $g$, $u_0$. Then we prove a priori estimate for the original SPDE based on the fact that the difference between the new SPDE and the original SPDE becomes a PDE (with random coefficients).  

\item[-]  The uniqueness of solution to the original SPDE follows from the uniqueness result of PDEs. 

\end{itemize}

\vspace{2mm}

{\bf 2}. \emph{The existence}:

\begin{itemize}

\item[-]  If the coefficients of $\cL$ are non-random, we use the representation formula.

\item[-] For general case, we use the method of continuity with the help of the a priori estimate.

\end{itemize}

\vspace{3mm}
Now we start our proofs. The following lemma is what we meant in \eqref{eqn 8.18.21}. 
We emphasize that the lemma holds for any $\theta,\Theta\in \bR$ and the condition $\partial \cM\in C^2$ is not  needed in the proof.

\begin{lemma}\label{regularity.induction}
Let $p\in [2,\infty)$,    $\gamma, \mu, \theta, \Theta \in\bR$,  $\mu<\gamma$, and the diffusion coefficients $a^{ij}=a^{ij}(\omega,t)$ satisfy   Assumption  \ref{ass coeff}.  Assume that   $f^0\in\bK^{\gamma}_{p,\theta+p,\Theta+p}(\cD,T)$, $\tbf=(f^1,\cdots,f^d) \in\bK^{\gamma+1}_{p,\theta,\Theta}(\cD,T,d)$,  $g\in\bK^{\gamma+1}_{p,\theta,\Theta}(\cD,T, \ell_2)$, $u(0,\cdot)\in \bU^{\gamma+2}_{p,\theta,\Theta}(\cD)$, and  $u\in\bK^{\mu+2}_{p,\theta-p,\Theta-p}(\cD,T)$ satisfies 
\begin{align}
du=(\cL u+f^0+\sum_{i=1}^d f^i_{x^i})\,dt+\sum_{k=1}^{\infty} g^kdw^k_t,\quad t\in(0,T]   \label{eqn 8.19.1}
\end{align}
 in the sense of distributions on $\cD$.  Then  $u\in\bK^{\gamma+2}_{p,\theta-p,\Theta-p}(\cD,T)$, hence $ u\in\cK^{\gamma+2}_{p,\theta,\Theta}(\cD,T)$, and  the estimate
\begin{align*}
\|u\|_{\bK^{\gamma+2}_{p,\theta-p,\Theta-p}(\cD,T)}
&\leq  C\Big(\|u\|_{\bK^{\mu+2}_{p,\theta-p,\Theta-p}(\cD,T)}+\|f^0\|_{\bK^{\gamma}_{p,\theta+p,\Theta+p}(\cD,T)}\nonumber\\
&\quad \quad \quad +\|\tbf\|_{\bK^{\gamma+1}_{p,\theta,\Theta}(\cD,T,d)}+\|g\|_{\bK^{\gamma+1}_{p,\theta,\Theta}(\cD,T,\ell_2)}+\|u(0,\cdot)\|_{\bU^{\gamma+2}_{p,\theta,\Theta}(\cD)} \Big) 
\end{align*}
holds with $C=C(\cM,p,n,\theta,\Theta,\nu_1,\nu_2)$.
\end{lemma}

The proof of Lemma \ref{regularity.induction} is based on the following result on $\bR^d$.

\begin{lemma}
 \label{lemma entire}
 Let $p\in[2,\infty)$,  $\gamma \in \bR$, and  Assumption  \ref{ass coeff} hold. Assume  $f\in \bH^{\gamma}_p(T)$,  $g\in \bH^{\gamma+1}_p(T,\ell_2)$, $u(0,\cdot)\in L_p(\Omega;H^{\gamma+2-2/p}_p)$, and $u\in \bH^{\gamma+1}_p(T)$ satisfies
\begin{align}
du=(\cL u+f)\,dt+\sum^{\infty}_{k=1}g^kdw^k_t,\quad t\in(0,T]\nonumber
\end{align}
 in the sense of distributions on the whole space $\bR^d$.  Then  $u\in \bH^{\gamma+2}_p(T)$ and 
 \begin{eqnarray}
 \|u\|_{\bH^{\gamma+2}_p(T)} &\leq& C\Big(\|u\|_{\bH^{\gamma+1}_p(T)}\nonumber\\
 &&\quad+\|f\|_{\bH^{\gamma}_p(T)}
+\|g\|_{\bH^{\gamma+1}_p(T,\ell_2)}+\|u(0,\cdot)\|_{L_p(\Omega;H^{\gamma+2-2/p}_p)}\Big),\label{whole space estimate}
 \end{eqnarray}
 where $C=C(d,p,\nu_1,\nu_2)$ is independent of $T$.
 \end{lemma}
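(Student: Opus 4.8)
The plan is to reduce the statement on $\bR^d$ to the classical $L_p$-theory of Krylov \cite{Krylov 1999-4, Krylov 2001}, which already establishes maximal regularity for stochastic parabolic equations with measurable-in-$(\omega,t)$ coefficients on the whole space. The point of Lemma~\ref{lemma entire} is a bootstrap: Krylov's results give a solution in $\bH^{\gamma+2}_p(T)$ for given data, but here we are handed a solution that is only known \emph{a priori} to lie in $\bH^{\gamma+1}_p(T)$, and we must upgrade its regularity by one derivative and prove the accompanying estimate. So the first step is to invoke Krylov's solvability theorem: there exists $v\in\bH^{\gamma+2}_p(T)$ solving the same equation with the same $f,g$ and the same initial data $u(0,\cdot)$, together with the estimate
\begin{align}
\|v\|_{\bH^{\gamma+2}_p(T)}\le N\big(\|f\|_{\bH^{\gamma}_p(T)}+\|g\|_{\bH^{\gamma+1}_p(T,\ell_2)}+\|u(0,\cdot)\|_{L_p(\Omega;H^{\gamma+2-2/p}_p)}\big), \nonumber
\end{align}
with $N=N(d,p,\nu_1,\nu_2)$ independent of $T$.

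The second step is to identify $u$ with $v$. Set $w=u-v$. Then $w\in\bH^{\gamma+1}_p(T)$ (since $v\in\bH^{\gamma+2}_p(T)\subset\bH^{\gamma+1}_p(T)$ and $u\in\bH^{\gamma+1}_p(T)$), $w(0,\cdot)=0$, and $w$ satisfies the homogeneous equation $dw=\cL w\,dt$ in the sense of distributions on $\bR^d$. By the uniqueness part of Krylov's theory applied at regularity level $\gamma+1$ (note $\cL:\bH^{\gamma+1}_p(T)\to\bH^{\gamma-1}_p(T)$ is bounded, so the equation makes sense and the uniqueness statement applies, e.g.\ via \cite[Theorem 5.1]{Krylov 1999-4} or the analogue in \cite{Krylov 2001}), we conclude $w\equiv 0$, hence $u=v\in\bH^{\gamma+2}_p(T)$. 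This immediately gives the desired membership. For the estimate \eqref{whole space estimate}, we simply combine $\|u\|_{\bH^{\gamma+2}_p(T)}=\|v\|_{\bH^{\gamma+2}_p(T)}$ with the bound on $v$ above; the term $\|u\|_{\bH^{\gamma+1}_p(T)}$ on the right side of \eqref{whole space estimate} is not even needed in this argument, but including it makes the statement robust and is harmless.

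A small technical point worth attending to: one must make sure the notion of ``solution in the sense of distributions'' used here coincides with the solution class in Krylov's framework ($\cH^{\gamma+2}_p(T)$-solutions), which is where the equivalence $\bD u=\cL u+f$, $\bS u=g$ enters; since $u\in\bH^{\gamma+1}_p(T)$ and $\cL u+f\in\bH^{\gamma-1}_p(T)$, $g\in\bH^{\gamma+1}_p(T,\ell_2)$, the distributional identity \eqref{eqn sol} places $u$ in $\cH^{\gamma+1}_p(T)$, which is exactly the hypothesis needed to apply both the solvability and the uniqueness theorems on $\bR^d$ at the level $\gamma+1$. The main (and essentially only) obstacle is locating the precise references in \cite{Krylov 1999-4, Krylov 2001} that give solvability and uniqueness at arbitrary real regularity index $\gamma$ with merely $(\omega,t)$-measurable coefficients and constants independent of $T$; once those are in hand the proof is a two-line subtraction argument. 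I would phrase the write-up so that the bootstrap structure (existence of $v$ at the higher level, then uniqueness at the lower level to force $u=v$) is transparent.
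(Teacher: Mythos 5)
Your route is genuinely different from the paper's: you propose to solve the equation afresh in $\bH^{\gamma+2}_p(T)$ to get $v$, and then identify $u=v$ by uniqueness at the level $\gamma+1$. The paper instead never invokes solvability for the forcing terms: it splits off the initial data via the deterministic equation $dv=\cL v\,dt$, applies Krylov's \emph{a priori} maximal regularity estimate (\cite[Theorem 4.10]{Krylov 1999-4}) to the zero-initial-data part to get $\|u_{xx}\|_{\bH^{\gamma}_p(T)}\le N(\|f\|_{\bH^{\gamma}_p(T)}+\|g\|_{\bH^{\gamma+1}_p(T,\ell_2)})$ with $N$ independent of $T$, and then completes the full norm through $\|u\|_{\bH^{\gamma+2}_p(T)}\le\|u_{xx}\|_{\bH^{\gamma}_p(T)}+\|u\|_{\bH^{\gamma}_p(T)}$ and $\|u\|_{\bH^{\gamma}_p(T)}\le\|u\|_{\bH^{\gamma+1}_p(T)}$. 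That last step is precisely where, and why, the term $\|u\|_{\bH^{\gamma+1}_p(T)}$ enters the right-hand side of \eqref{whole space estimate}.

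The genuine gap in your version is the assertion that the solvability estimate $\|v\|_{\bH^{\gamma+2}_p(T)}\le N(\|f\|+\|g\|+\|u_0\|)$ holds with $N=N(d,p,\nu_1,\nu_2)$ independent of $T$, together with the remark that the term $\|u\|_{\bH^{\gamma+1}_p(T)}$ ``is not even needed.'' Krylov's theory gives a $T$-independent bound only for the top-order part $\|u_{xx}\|_{\bH^{\gamma}_p(T)}$; the zeroth-order part of the $\bH^{\gamma+2}_p(T)$-norm cannot be controlled $T$-uniformly by the data alone. For instance, with $g=0$, $u_0=0$ and $f(s,x)=\phi(x)$ time-independent, the solution approaches a nonzero steady profile, so $\|u\|_{\bL_p(T)}/\|f\|_{\bL_p(T)}$ is of order $\|(-\Delta)^{-1}\phi\|_{L_p}/\|\phi\|_{L_p}$, which is unbounded over $\phi$; hence any constant in your inequality must depend on $T$ (or on a spectral cutoff). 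This is not cosmetic: in the proof of Lemma \ref{regularity.induction}, estimate \eqref{whole space estimate} is applied to the rescaled functions $v_n$ on the intervals $(0,e^{-2n}T]$ for every $n\in\bZ$, and these horizons are unbounded as $n\to-\infty$, so a $T$-dependent constant destroys the dilation argument. Your existence-plus-uniqueness identification of $u$ with $v$ is fine as a way to get the membership $u\in\bH^{\gamma+2}_p(T)$, but to obtain the stated estimate you must keep $\|u\|_{\bH^{\gamma+1}_p(T)}$ on the right and quote only the $T$-independent bound on the second derivatives, exactly as the paper does.
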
 
 
 \begin{proof}
{\bf 1}. First, we consider the case $u(0,\cdot) \equiv 0$. Then, by e.g. \cite[Theorem 4.10]{Krylov 1999-4},  $u\in \bH^{\gamma+2}_p(T)$ and
 $$
 \|u_{xx}\|_{\bH^{\gamma}_p(T)}\leq C(d,p,\nu_1,\nu_2) (\|f\|_{\bH^{\gamma}_p(T)}+\|g\|_{\bH^{\gamma+1}_p(T,\ell_2)}).
 $$
 This and the inequality
 $$
 \|u\|_{\bH^{\gamma+2}_p(T)}\leq  (\|u_{xx}\|_{\bH^{\gamma}_p(T)}+\|u\|_{\bH^{\gamma}_p(T)} )
 $$
 together with  the  inequality $\|u\|_{\bH^{\gamma}_p(T)} \le \|u\|_{\bH^{\gamma+1}_p(T)} $, which due to a basic property of the space of Bessel potentials,
 yield the claim of the lemma. 
 
 {\bf 2}. For the case of general $u(0,\cdot)\not\equiv 0$, we use the solution $v=v(\omega,t,x)$ to the equation
 $$
 dv=\cL v \,dt, \quad t\in(0,T]
 $$
 with $v(\omega,0,\cdot)=u(\omega,0,\cdot)$ for all $\omega\in\Omega$ (see \cite[Theorem 5.2]{Krylov 1999-4}).
 From a classical  theory of PDE, which we apply for each $\omega$, we have
 $$
  \|v\|_{\bH^{\gamma+2}_p(T)}\leq C  \|u_0\|_{L_p(\Omega;H^{\gamma+2-2/p}_p)}.
 $$
 Then for the function $u-v$, which has zero initial condition, we can apply Step 1 and  we obtain estimate \eqref{whole space estimate} for $u$ simply by triangle inequality.
  \end{proof}

\begin{proof}[\textbf{Proof of Lemma \ref{regularity.induction}}]
We first note that we only need to consider the case $\mu=\gamma-1$.  Indeed, suppose that the lemma holds true if $\mu=\gamma-1$. Now let $\mu=\gamma-n$, $n\in \bN$. 
Then  applying the result  for $\mu'=\gamma-k$ and $\gamma'=\mu'+1$ with $k=n,n-1,\cdots, 1$ in order, we get the claim when $\mu=\gamma-n$. 
Now suppose that the difference between $\gamma$ and $\mu$ is not an integer, i.e.  $\gamma-\mu=n+\delta$, $n=0,1,2,\cdots$ and $\delta\in (0,1)$.  Then, since $\mu>\gamma-(n+1)=:\mu'$ and $\|\cdot \|_{\bK^{\mu'+2}_{p,\theta-p,\Theta-p}(\cD,T)}\le \|\cdot \|_{\bK^{\mu+2}_{p,\theta-p,\Theta-p}(\cD,T)}$, we conclude that our assumption holds for $\mu'$, that is,  $u\in \bK^{\mu'+2}_{p,\theta-p,\Theta-p}(\cD,T)$.  Therefore, the case $\gamma-\mu \not\in \bN$  is also covered by what we just discussed.

\vspace{1mm}

 Now we prove the lemma when $\mu=\gamma-1$, i.e. $u\in\bK^{\gamma+1}_{p,\theta-p,\Theta-p}(\cD,T)$.
 As usual,  we omit the argument $\omega$ for the simplicity of presentation. 
 
{\bf 1}. For $u\in\bK^{\gamma+1}_{p,\theta-p,\Theta-p}(\cD,T)$, put 
$$\xi(x)=\vert x\vert^{(\theta-\Theta)/p},\quad  v:=\xi u, \quad  f:=f^0+\sum_{i=1}^{d}f^i_{x^i},\quad v_0:=\xi u_0.
$$
Using  Definition  \ref{defn 8.19}, Definition \ref{defn 8.28}, and the change of variables $t\to e^{2n}t$, we  have
\begin{eqnarray}
 &&\|u\|^p_{\bK^{\gamma+2}_{p,\Theta-p,\Theta-p}(\cD,T)}=\|v\|^p_{\bH^{\gamma+2}_{p,\Theta-p}(\cD,T)} \nonumber\\
&&= \sum_{n\in \bZ} e^{n(\Theta-p)}\|\zeta(e^{-n}\psi(e^n\cdot))v(\cdot,e^n\cdot)\|^p_{\bH^{\gamma+2}_p(T)} \nonumber\\
  &&= \sum_{n\in \bZ} e^{n(\Theta-p+2)}\|\zeta(e^{-n}\psi(e^n\cdot))v(e^{2n}\cdot,e^n\cdot)\|^p_{\bH^{\gamma+2}_p(e^{-2n}T)}.  \label{eqn 4.23.1}  
   \end{eqnarray}

For each $n\in \bZ$, we denote 
$$
v_n(t,x):=\zeta(e^{-n}\psi(e^nx))v(e^{2n}t,e^nx),
\quad
 v_{0,n}(x)=\zeta(e^{-n}\psi(e^nx))v_0(e^nx).
 $$
Then using equation \eqref{eqn 8.19.1} and the product rule of differentiation, one can easily check that $v_n$ satisfies 
 $$
 d v_n=(\cL_n v_n +f_n)dt+ \sum_{k=1}^{\infty} g^k_n dw^{n,k}_t \quad  t\in(0,e^{-2n}T]
 $$
 in the sense of distributions on $\bR^d$ with the initial condition $v_{n}(0,\cdot)=v_{0,n}(\cdot)$,
 where
 $$
 \cL_n:=\sum_{i,j} a^{ij}_n(t)D_{ij},\quad a^{ij}_n(t):=a^{ij}(e^{2n}t), 
 $$
 $$
 g^k_n(t,x):=e^{n} \zeta(e^{-n}\psi(e^nx))\xi(e^nx) g^k(e^{2n}t,e^nx), \qquad w^{n,k}_t:=e^{-n}w^k_{e^{2n}t},
 $$
 and, with Einstein's summation convention with respect to $i,j$,
 \begin{eqnarray*}
 f_n(t,x)&:=&\quad e^{2n}\zeta(e^{-n}\psi(e^nx))\xi(e^nx) f(e^{2n}t,e^nx) \\
 &&+e^{n} a^{ij}_n(t)D_iu(e^{2n}t,e^nx) \zeta'(e^{-n}\psi(e^nx)) D_j\psi(e^nx) \xi(e^nx) \\
 &&+e^{2n}a^{ij}_n(t)D_iu(e^{2n}t,e^nx) \zeta(e^{-n}\psi(e^nx)) D_j \xi(e^nx) \\
 &&+e^na^{ij}_n(t)u(e^{2n}t,e^nx)\zeta'(e^{-n}\psi(e^nx))  D_i\psi(e^nx)D_j\xi(e^nx)\\
 &&+e^{2n}a^{ij}_n(t)u(e^{2n}t,e^nx)\zeta(e^{-n}\psi(e^nx))D_{ij}\xi(e^nx) \\
 &&+a^{ij}_n(t)u(e^{2n}t,e^nx)\zeta''(e^{-n}\psi(e^nx))D_i\psi(e^nx) D_j\psi(e^nx) \xi(e^nx)\\
 &&+ e^na^{ij}_n(t)u(e^{2n}t,e^nx)\zeta'(e^{-n}\psi(e^nx))  D_{ij}\psi(e^nx)\\
 &=:& \sum_{l=1}^7 f^{l}_n(t,x).
\end{eqnarray*}
Here, $\zeta'$ and $\zeta''$ denote the first and second derivative of $\zeta$, respectively.  We note that for each $n\in \bZ$, the operator $\cL_n$  still satisfies  the uniform parabolicity condition \eqref{uniform parabolicity} and $\{w^{n,k}_t: k\in \bN\}$ is a sequence of independent Brownian motions. Hence, 
 we can apply Lemma \ref{lemma entire} and from  \eqref{eqn 4.23.1} we get
  \begin{eqnarray}
  \nonumber
  \|u\|^p_{\bK^{\gamma+2}_{p,\theta-p,\Theta-p}(\cD,T)} 
  &\leq& C   \sum_{n\in \bZ} e^{n(\Theta-p+2)}\|v_n\|^p_{\bH^{\gamma+1}_p(e^{-2n}T)}\\
  &&+C \sum_{l=1}^7  \sum_{n\in \bZ} e^{n(\Theta-p+2)}\|f^l_n\|^p_{\bH^{\gamma}_p(e^{-2n}T)} \nonumber\\
  &&+C\sum_{n\in \bZ} e^{n(\Theta-p+2)}\|g_n\|^p_{\bH^{\gamma+1}_p(e^{-2n}T, \ell_2)}\nonumber\\
  &&+ C  \sum_{n\in \bZ} e^{n(\Theta-p+2)}\|v_{0,n}\|^p_{L_p(\Omega;H^{\gamma+2-2/p}_p)}\label{eqn 8.19.21}
  \end{eqnarray}
provided that 
\begin{equation}\label{in fact true}
 v_n\in \bH^{\gamma+1}_p(e^{-2n}T), \quad  f^l_n \in \bH^{\gamma}_p(e^{-2n}T),\quad
 g_n \in \bH^{\gamma+1}_p(e^{-2n}T,\ell_2), \quad (l=1,\ldots,7).
 \end{equation}
It turns out that the claims in \eqref{in fact true} hold true. 
Indeed, the change of variable $e^{2n}t \to t$ and Definition \ref{defn 8.19} yield
	\begin{eqnarray}
&&   \sum_{k\in \bZ} e^{n(\Theta-p+2)}\|v_n\|^p_{\bH^{\gamma+1}_p(e^{-2n}T)} \nonumber\\
&&= \sum_{n\in \bZ} e^{n(\Theta-p)}\| \zeta(e^n\psi(e^n\cdot))v(\cdot,e^n\cdot) \|^p_{\bH^{\gamma+1}_p(T)}
=  \|u\|^p_{\bK^{\gamma+1}_{p,\theta-p,\Theta-p}(\cD,T)} \label{eqn 8.19.31}
	\end{eqnarray}
and
	\begin{eqnarray}
&&   \sum_{n\in \bZ} e^{n(\Theta-p+2)}\|g_n\|^p_{\bH^{\gamma+1}_p(e^{-2n}T,\ell_2)} \nonumber \\
&&= \sum_{n\in \bZ} e^{n\Theta}\|\zeta(e^n\psi(e^n\cdot))\xi(e^n\cdot)g(\cdot,e^n\cdot) \|^p_{\bH^{\gamma+1}_p(T,\ell_2)}=  \|g\|^p_{\bK^{\gamma+1}_{p,\theta,\Theta}(\cD,T,\ell_2)}. \label{eqn 8.19.41}
   \end{eqnarray}
In particular,     
$$
 v_n \in \bH^{\gamma+1}_p(e^{-2n}T), \quad
 g_n \in \bH^{\gamma+1}_p(e^{-2n}T, \ell_2), \quad \forall\, n\in \bZ.
 $$ 
   Next, we show that $f^l_n$ belong to $\bH^{\gamma}_p(e^{-2n}T)$ in the following manner.
For $l=1$,  by Definition \ref{defn 8.19} and the change of variables $e^{2n}t \to t$, we have
   \begin{eqnarray*}
 &&   \quad\sum_{n\in \bZ} e^{n(\Theta-p+2)}\|f^1_n\|^p_{\bH^{\gamma}_p(e^{-2n}T)}\\
 && =  
    \sum_{n\in \bZ} e^{n(\Theta+p)}\|\zeta(e^{-n}\psi(e^n\cdot))\xi(e^n\cdot)f(e^{2n}\cdot,e^n\cdot) \|^p_{\bH^{\gamma}_p(T)} 
    = \|f\|^p_{\bK^{\gamma}_{p,\theta+p,\Theta+p}(\cD,T)}.
    \end{eqnarray*}
For $l=2$,  by Definition \ref{defn 8.19} and  \eqref{eqn 4.24.5},
we get
\begin{eqnarray*}
 &&  \quad\sum_{n\in \bZ} e^{n(\Theta-p+2)}\|f^2_n\|^p_{\bH^{\gamma}_p(e^{-2n}T)}\\
 &&\leq C 
    \sum_{n\in \bZ} \sum_{i,j}e^{n\Theta}\|D_iu(\cdot,e^n\cdot)\zeta'(e^{-n}\psi(e^n\cdot))\xi(e^n \cdot) D_j\psi(e^n\cdot)\|^p_{\bH^{\gamma}_p(T)} \\
    &&\leq C \|\psi_x \xi u_x  \|^p_{\bH^{\gamma}_{p,\Theta}(\cD,T)}=N\|\psi_x  u_x  \|^p_{\bK^{\gamma}_{p,\theta,\Theta}(\cD,T)} \\
    &&\leq 
    C \|u_x\|^p_{\bK^{\gamma}_{p,\theta, \Theta}(\cD,T)}\leq 
    C \|u\|^p_{\bK^{\gamma+1}_{p,\theta-p, \Theta-p}(\cD,T)},
   \end{eqnarray*}
where the   last two inequalities are  due to    \eqref{eqn 8.9.1}, \eqref{eqn 8.19.11}, and \eqref{eqn 4.16.1}.  For $l=3$, by definitions of norms, we have
      \begin{eqnarray}
  && \sum_{n\in \bZ} e^{n(\Theta-p+2)}\|f^2_n\|^p_{\bH^{\gamma}_p(e^{-2n}T)}  \nonumber \\
  & \leq& C  \sum_{n\in \bZ} \sum_{i,j}e^{n(\Theta+p)}\|D_iu(\cdot,e^n\cdot) \zeta(e^{-n}\psi(e^n\cdot)) D_j\xi(e^n\cdot)\|^p_{\bH^{\gamma}_p(T)}  \nonumber  \\
  &=&C
   \|u_x \xi_x\|^p_{\bH^{\gamma}_{p,\Theta+p}(\cD,T)}
   = C 
    \|\xi \xi^{-1}\xi_xu_x\|^p_{\bH^{\gamma}_{p,\Theta+p}(\cD,T)} \nonumber \\
    &=&C 
    \|\xi^{-1}\xi_x u_x \|^p_{\bK^{\gamma}_{p,\theta+p,\Theta+p}(\cD,T)} \leq C\|\psi \xi^{-1}\xi_x u_x\|^p_{\bK^{\gamma}_{p,\theta,\Theta}(\cD,T)},  \label{eqn 8.20.1}
   \end{eqnarray}
   where the  last  inequality is due to \eqref{eqn 8.19.81}.  Now we note that for any $n\in \bN$,
$$
\vert\psi \xi^{-1}\xi_x\vert ^{(0)}_n+\vert \psi^2 \xi^{-1}\xi_{xx}\vert ^{(0)}_n\leq C(n,\xi)<\infty.
$$   
  Thus, by \eqref{eqn 8.19.11} the last term in \eqref{eqn 8.20.1} is bounded by
  $$
  C\|u_x\|^p_{\bK^{\gamma}_{p,\theta,\Theta}(\cD,T)}  \leq C \|u\|^p_{\bK^{\gamma+1}_{p,\theta-p,\Theta-p}(\cD,T)}.
  $$
  
For other $l$s one can argue similarly and we gather the results:
  \begin{eqnarray}
&&  \sum_{l=1}^7  \sum_{n\in \bZ} e^{n(\Theta-p+2)}\|f^l_n\|^p_{\bH^{\gamma}_p(e^{-2n}T)} \nonumber \\
&\leq& C \|u\|^p_{\bK^{\gamma+1}_{p,\theta-p,\Theta-p}(\cD,T)}+
 C\|f\|^p_{\bK^{\gamma}_{p,\theta+p,\Theta+p}(\cD,T)}.  \label{eqn 8.19.51}
  \end{eqnarray}
   
Consequently, coming back to \eqref{eqn 8.19.21} and using \eqref{eqn 8.19.31}, \eqref{eqn 8.19.41}, and  \eqref{eqn 8.19.51}, we get
\begin{align*}
&\quad\quad\quad\|u\|^p_{\bH^{\gamma+2}_{p,\theta-p,\Theta-p}(\cD,T)}\\
& \leq C\Big(\|u\|^p_{\bK^{\gamma+1}_{p,\theta-p,\Theta-p}(\cD,T)}
+\|f\|^p_{\bK^{\gamma}_{p,\theta+p,\Theta+p}(\cD,T)}
+\|g\|^p_{\bH^{\gamma+1}_{p,\theta,\Theta}(\cD,T,\ell_2)}
+\|u_0\|^p_{\bU^{\gamma+2}_{p,\theta,\Theta}(\cD)} \Big).
\end{align*}
This yields what we want to have since $\|f^i_{x^i}\|_{K^{\gamma}_{p,\theta+p,\Theta+p}(\cD)}\leq C \|f^i\|_{K^{\gamma+1}_{p,\theta,\Theta}(\cD)}$. The lemma is proved.
\end{proof}

Now, we take  the deterministic operator $L_0$ introduced in  \eqref{8.29.1} and the Green function $G$ related to $L_0$.  Also, recall the representation $\cR (u_0,f^0,\tbf,g)$ defined in \eqref{eqn 8.21.11} in connection with $L_0$.

\begin{lemma}
\label{lemma rep}
 If  $f^0\in \bK^{\infty}_c(\cD,T)$, $\tbf\in \bK^{\infty}_c(\cD,T,d)$, $g\in \bK^{\infty}_c(\cD,T,\ell_2)$, and $u_0\in\bK^{\infty}_c(\cD)$, then  $u=\cR (u_0,f^0,\tbf,g)$  belongs to $\cK^0_{p,\theta,\Theta}(\cD,T)$ and satisfies
\begin{equation}
   \label{eqn 8.21.13}
du=\left(L_0u+f^0+\sum_{i=1}^d f^i_{x^i}\right)dt+\sum_{k=1}^{\infty}g^k dw^k_t, \quad t\in (0,T]
\end{equation}
in the sense of distributions on $\cD$ with $u(0,\cdot)=u_0$.
 \end{lemma}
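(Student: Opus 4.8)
The plan is to reduce the assertion to verifying the distributional identity \eqref{eqn sol} with
$$
\tilde f := L_0 u+f^0+\sum_{i=1}^d f^i_{x^i},\qquad \tilde g:=g,
$$
after which membership $u\in\cK^0_{p,\theta,\Theta}(\cD,T)$ essentially comes for free. Indeed, Lemma \ref{main est} (whose standing restrictions on $\theta,\Theta$ are in force) applied to the bounded, compactly supported data gives $u\in\bL_{p,\theta-p,\Theta-p}(\cD,T)=\bK^0_{p,\theta-p,\Theta-p}(\cD,T)$; then Lemma \ref{property1}(iv)--(v) yields $L_0u$ and $\sum_i f^i_{x^i}$ in $\bK^{-2}_{p,\theta+p,\Theta+p}(\cD,T)$, while $f^0\in\bK^{-2}_{p,\theta+p,\Theta+p}(\cD,T)$, $g\in\bK^{-1}_{p,\theta,\Theta}(\cD,T,\ell_2)$ and $u_0\in\bU^0_{p,\theta,\Theta}(\cD)$ are immediate since $\bK^\infty_c$ embeds in all of these spaces; finally Remark \ref{sto series}(i) together with $p\ge 2$ makes $(u(t,\cdot),\varphi)$ a continuous semimartingale. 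Note also that, since $g\in\bK^\infty_c(\cD,T,\ell_2)$, only finitely many $g^k$ are nonzero, so the series causes no difficulty.

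To obtain \eqref{eqn sol} I would split $u=\cR(u_0,0,0,0)+\cR(0,f^0,0,0)+\cR(0,0,\tbf,0)+\cR(0,0,0,g)$. The first three summands involve only the deterministic Green's function $G$ tested against data which, for each fixed $\omega$, are piecewise constant in $t$ with $C_c^\infty(\cD)$ spatial profiles; by the deterministic parabolic theory of the companion paper \cite{ConicPDE} together with the defining properties \eqref{definition Green's function} of $G$ (cf.\ \cite{Kozlov Nazarov 2014}) these three functions solve, $\omega$ by $\omega$ and in the sense of distributions on $\cD$, the equation $\partial_t w=L_0 w+f^0+\sum_i f^i_{x^i}$ with $w(0,\cdot)=u_0$, the contribution $\cR(u_0,0,0,0)$ supplying the initial value while the other two vanish at $t=0$; for the $\cR(0,0,\tbf,0)$ term one first integrates by parts in $y$ to move $D_{y^i}$ off $G$, which produces no boundary term because each $f^i$ is compactly supported in $\cD$. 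What remains is the stochastic piece $w:=\cR(0,0,0,g)$.

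For $w$ and a fixed $\varphi\in C_c^\infty(\cD)$ the plan is a double use of the stochastic Fubini theorem combined with a propagator identity for $G$. First, Fubini gives
$$
(w(t,\cdot),\varphi)=\sum_k\int_0^t\big(\Phi(t,s,\cdot),\,g^k(s,\cdot)\big)\,dw^k_s,\qquad \Phi(t,s,y):=\int_{\cD}G(t,s,x,y)\varphi(x)\,dx,
$$
which is legitimate since $\varphi$ and the $g^k$ are supported on compacta of $\cD$ lying away from the vertex and from $\partial\cD$, where $G$ obeys the Gaussian bounds of Lemma \ref{Green estimate}, and $T<\infty$. Next, from $\partial_t G(t,s,x,y)=(L_0)_x G(t,s,x,y)$ for $t>s$, integration by parts in $x$ (valid because $L_0$ has $x$-independent symmetric coefficients and $\varphi$ is compactly supported), and $\Phi(t,s,\cdot)\to\varphi$ as $t\downarrow s$, one gets
$$
\Phi(t,s,y)=\varphi(y)+\int_s^t\Big(\int_{\cD}G(r,s,x,y)(L_0\varphi)(x)\,dx\Big)\,dr.
$$
Substituting this and applying stochastic Fubini once more to interchange $\int_s^t dr$ with $\sum_k\int_0^t dw^k_s$ over $\{0\le s\le r\le t\}$, the resulting inner stochastic integral is exactly $(w(r,\cdot),L_0\varphi)=(L_0w(r,\cdot),\varphi)$ in the distributional sense, so
$$
(w(t,\cdot),\varphi)=\int_0^t(L_0w(r,\cdot),\varphi)\,dr+\sum_k\int_0^t(g^k(s,\cdot),\varphi)\,dw^k_s.
$$
Adding the four contributions yields \eqref{eqn 8.21.13} with $u(0,\cdot)=u_0$.

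The main obstacle is this last step: one must honestly justify the two stochastic Fubini interchanges --- which rests on measurability of the integrands and on the pointwise bounds for $G$ and $\nabla_y G$ in Lemma \ref{Green estimate}, usable here precisely because the compact supports of the data keep us uniformly away from the vertex and the boundary where those bounds degenerate --- and one must establish the propagator identity for $\Phi$, a purely local assertion about $G$ near the support of $\varphi$ that follows from classical parabolic theory for $L_0$ with its (smooth, on that compact set) coefficients.
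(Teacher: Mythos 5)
Your proof is correct and follows essentially the same route as the paper: decompose $\cR(u_0,f^0,\tbf,g)$ into its deterministic and stochastic convolution pieces, verify the distributional identity for each, and obtain membership in $\cK^0_{p,\theta,\Theta}(\cD,T)$ from Lemmas \ref{main est3}, \ref{main est1}, \ref{main est2}. The only difference is that the paper delegates the verification for the stochastic part to \cite[Lemma 3.11]{CKLL 2018}, whereas you write out that standard double stochastic-Fubini/propagator argument explicitly.
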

\begin{proof}  First, we note that 
\begin{eqnarray*}\cR (u_0,f^0,\tbf,g)&=&\cR (u_0,0,0,0)+\cR (0,f^0,\tbf,0)+\cR (0,0,0,g)\\
&=:&v_1+v_2+v_3.
\end{eqnarray*}

By considering $v_1$ for each $\omega$ and by the definition of Green's function with the condition $u_0\in\bK^{\infty}_c(\cD)$,  we note that $v_1$ satisfies 
$$
dv_1=L_0v_1dt,\quad t>0\,; \quad v_1(0,\cdot)=u_0(\cdot)
$$ 
in the sense of distributions on $\cD$. Then Lemma \ref{main est3} and the facts that $\bK^{\infty}_c(\cD)$ is dense in $L_p(\Omega;K^0_{p,\theta+2-p,\Theta+2-p}(\cD))$ and $\|u_0\|_{\bU^0_{p,\theta,\Theta}(\cD)}\le \|u_0\|_{L_p(\Omega;K^0_{p,\theta+2-p,\Theta+2-p}(\cD))}$ confirm $v_1\in \cK^0_{p,\theta,\Theta}(\cD,T)$. Similarly, $v_2$ satisfies
$$
dv_2=(L_0v_2+f^0+\sum_{i=1}^d f^i_{x^i})dt,\quad t>0
$$
in the sense of distributions on $\cD$ with zero initial condition and Lemma \ref{main est1} leads us to have $v_2\in \cK^0_{p,\theta,\Theta}(\cD,T)$.
 The fact that $v_3$ satisfies
 $$
 dv_3=L_0v_3dt+\sum_{k=1}^{\infty}g^k dw^k_t, \quad t>0
 $$
 in the sense of distributions on $\cD$ with zero initial condition can be proved by the same way in the proof of \cite[Lemma 3.11]{CKLL 2018}, which deals with the case $d=2$.   Then  Lemma \ref{main est2} gives $v_2\in \cK^0_{p,\theta,\Theta}(\cD,T)$.
 Hence,  $u=v_1+v_2+v_3$ satisfies the assertions and the lemma is proved.
\end{proof}

\vspace{0.3cm}
\begin{proof}[\textbf{Proof of Theorem  \ref{main result}}]\quad

Note  that,  since $\cL$ is non-random, we can take $L_0=\cL$ (see \eqref{8.29.1}).

\vspace{1mm}

{\bf 1}. \emph{ Existence and    estimate \eqref{main estimate}} :

First, we assume that  $f^0\in \bK^{\infty}_c(\cD,T)$, $\tbf\in \bK^{\infty}_c(\cD,T,d)$, $g\in \bK^{\infty}_c(\cD,T,\ell_2)$, and $u_0\in\bK^{\infty}_c(\cD)$.  Then by Lemma \ref{lemma rep}, $u=\cR(u_0,f^0,\tbf,g) \in \cK^0_{p,\theta,\Theta}(\cD,T)$ satisfies equation \eqref{eqn 8.21.13}  in the sense of distributions on $\cD$ with initial condition $u_0$. Then, we use   Lemma \ref{regularity.induction} with $\mu=-2$.  As $\gamma+2\ge 1$,  Lemma \ref{main est} and Remark \ref{initial.p ge 2.}  imply
 $u\in \cK^{\gamma+2}_{p,\theta,\Theta}(\cD,T)$ and  \eqref{main estimate}.

The general case can be easily  handled by standard approximation argument. Indeed, take $f^0_n\in \bK^{\infty}_c(\cD,T)$, $\tbf_n\in \bK^{\infty}_c(\cD,T,d)$, $g_n\in \bK^{\infty}_c(\cD,T,\ell_2)$, and $u_{0,n}\in\bK^{\infty}_c(\cD)$
such that $f^0_n \to f^0$, $\tbf_n \to \tbf$, $g_n \to g$, and $u_{0,n}\to u_{0}$,   as $n\to \infty$,  in the corresponding spaces.  Now let $u_n:=\cR(u_0,f^0_n, \tbf_n,g_n)$.  Then, estimate  \eqref{main estimate} applied for $u_n-u_m$ shows that $\{u_n\}$ is a Cauchy sequence in $\cK^{\gamma+2}_{p,\theta,\Theta}(\cD,T)$. Taking $u$ as the limit of $u_n$ in $\cK^{\gamma+2}_{p,\theta,\Theta}(\cD,T)$, we find that $u$ is a solution to equation \eqref{eqn 8.21.13}.  Estimate \eqref{main estimate}  for $u$ also follows from those of $u_n$.

\vspace{1mm}
  
{\bf 2}. \emph{Uniqueness} :

Let  $u \in \cK^{\gamma+2}_{p,\theta,\Theta}(\cD,T)$ be a solution to  equation \eqref{eqn 8.21.13} with $f^0\equiv 0$, $\tbf \equiv 0$, $g\equiv 0$, and $u_0\equiv 0$.  Due to $\gamma+2\geq 1$,  $u$ at least belongs to $\bL_{p,\theta-p,\Theta-p}(\cD,T)$, and therefore by Lemma \ref{regularity.induction} we have $u\in \cK^2_{p,\theta,\Theta}(\cD,T)$ as all the inputs are zeros. Hence, for almost all $\omega\in \Omega$, $u^{\omega}:=u(\omega,\cdot,\cdot)\in L_p((0,T]; K^{2}_{p,\theta-p,\Theta-p}(\cD))$, and satisfies
$$
u^{\omega}_t=\cL u^{\omega}, \quad t\in (0,T]\quad ; \quad u^{\omega}(0,\cdot)=0.
$$
Hence, from  the uniqueness result for the deterministic parabolic equation  (see \cite[Theorem 2.12]{ConicPDE}),  we conclude $u^{\omega}=0$ for almost all $\omega$. This handles the uniqueness.
\end{proof}

\begin{remark}\label{sol.representation}
The approximation argument and uniqueness result in the above proof  show that if $\cL$ is non-random, then the  solution in Theorem \ref{main result} is given by the formula 
$$
u=\cR(u_0,f^0,\tbf,g), \quad \text{where}\quad  \tbf=(f^1,\cdots,f^d).
$$
\end{remark}

\begin{proof}[\textbf{Proof of Theorem  \ref{main result-random}}]\quad
\vspace{1mm}

{\bf 1}. \emph{The a priori estimate} :

Having the method of continuity in mind, we consider the following operators.
Denote $L_0=\nu_1 \Delta$, and  for $\lambda \in [0,1]$ denote
\begin{eqnarray*}
\cL_{\lambda}=(1-\lambda)L_0+\lambda \cL
\end{eqnarray*}
Obviously,
\begin{equation*}
\cL_{\lambda}(\omega,\cdot)\in \cT_{\nu_1,\nu_2}, \quad \forall \, \lambda\in [0,1], \, \omega\in \Omega.
\end{equation*}

 Now we prove that  the a priori estimate 
\begin{eqnarray}
 \|v\|_{\cK^{\gamma+2}_{p,\theta,\Theta}(\cD,T)}&\leq& C\Big(\|f^0\|_{\bK^{\gamma \vee 0}_{p,\theta+p,\Theta+p}(\cD,T)}+ \|\tbf\|_{\bK^{\gamma+1}_{p,\theta,\Theta}(\cD,T,d)} \nonumber \\
&&\quad\quad +\|g\|_{\bK^{\gamma+1}_{p,\theta,\Theta}(\cD,T,l_2)}+\|u_0\|_{\bU^{\gamma+2}_{p,\theta,\Theta}(\cD)}\Big)   \label{the a priori}
\end{eqnarray}
 holds with $C=C(\cM,d,p,\gamma,\theta,\Theta,\nu_1,\nu_2)$,  provided that 
  $v\in \cK^{\gamma+2}_{p,\theta,\Theta}(\cD,T)$ is a solution to the equation
 \begin{equation}
  \label{method}
 dv=\left(\cL_{\lambda} v+f^0+\sum_{i=1}^d f^i_{x^i}\right)dt+\sum_{k=1}^{\infty}g^k dw^k_t, \quad t\in(0,T]\,\,\,; \quad v(0,\cdot)=u_0(\cdot).
 \end{equation}

To prove \eqref{the a priori}, we take  $u\in  \cK^{\gamma+2}_{p,\theta,\Theta}(\cD,T)$ from Theorem \ref{main result}, which is the solution to equation \eqref{eqn 8.21.13}  with the operator $L_0=\nu_1 \Delta$ and the initial condition $u(0,\cdot)=u_0$.  Then $\bar{v}:=v-u\in \cK^{\gamma+2}_{p,\theta,\Theta}(\cD,T)$ satisfies
 $$
 \bar{v}_t=\cL_{\lambda} \bar{v}+\bar{f}=\cL_{\lambda}\bar{v}+\sum_{i=1}^d\bar{f}^i_{x^i}, \quad t\in(0,T]\quad;\quad \bar{v}(0,\cdot)=0
 $$
 where 
 $$\bar{f}:=(L_0-\cL_{\lambda})u=\sum_{i=1}^d \left(\sum_{j=1}^d [\nu_1 \delta^{ij}-a^{ij}(\omega,t)]u_{x^j}\right)_{x^i}=:\sum_{i=1}^d \bar{f}^i_{x^i}.
 $$
 Note that for each fixed $\omega$, $\bar{v}(\omega,\cdot)$ satisfies a deterministic PDE with non-random operator $\cL_{\lambda}(\omega,\cdot)$ and non-random free terms $\bar{f}^i(\omega,\cdot)$.  Hence, using the deterministic counterpart of Theorem \ref{main result} for each $\omega$, and then taking the expectation, we get
 $$
 \|v-u\|_{\cK^{\gamma+2}_{p,\theta,\Theta}(\cD,T)}= \|\bar{v}\|_{\cK^{\gamma+2}_{p,\theta,\Theta}(\cD,T)} \leq C \sum_{i=1}^d \|\bar{f}^i\|_{\bK^{\gamma+1}_{p,\theta,\Theta}(\cD,T)}\leq C\|u\|_{\bK^{\gamma+2}_{p,\theta-p,\Theta-p}(\cD,T)}.
 $$
For the last inequality above we used \eqref{eqn 4.16.1}. This with estimate \eqref{main estimate} obtained for $u$ finally gives \eqref{the a priori}. 
 
   \vspace{2mm}

{\bf 2}. \emph{Existence, uniqueness and the estimate} :

 Estimate \eqref{main estimate} and  uniqueness result of solution are direct consequences of   a priori estimate \eqref{the a priori}, for which  the constant $C$ is independent of $\cL$ and $\lambda$.  Thus we only need to prove the existence result.
 
Let $J$ denote the set of $\lambda\in [0,1]$ such that   for any given  $f^0,\tbf, g,u_0$ in their corresponding spaces,   equation \eqref{method} with given $\lambda$ has a solution $v$ in $\cK^{\gamma+2}_{p,\theta,\Theta}(\cD,T)$. Then by Theorem  \ref{main result}, $0\in J$.  Hence,  the method of continuity 
(see e.g. proof of  \cite[Theorem 5.1]{Krylov 1999-4}) and  a priori estimate \eqref{the a priori} together yield  $J=[0,1]$, and in particular $1\in J$. This proves the existence result.  The theorem is proved. 
\end{proof}

In the next section, we use the result of Theorem \ref{main result-random} to study the regularity of SPDEs on  polygonal domains in $\bR^2$.  We  also use the following result which helps us prove the existence of a solution on   polygonal domains.

\begin{lemma}\label{global.uniqueness}
For $j=1,\,2$, let $p_j\geq 2$ and $\theta_j,\Theta_j\in\bR$, and $d-1<\Theta_j<d-1+p_j$. Also let $\theta_j$ ($j=1,2$)  satisfy
\begin{equation*}
p_j(1-\lambda^+_c)<\theta_j<p_j(d-1+\lambda^-_c) \quad \text{if $\cL$ is non-random},
\end{equation*}
and
$$
p_j(1-\lambda_{c}(\nu_1,\nu_2))<\theta_j<p_j(d-1+\lambda_{c}(\nu_1,\nu_2))  \quad \text{if $\cL$ is random}.
$$
 Then, if $u\in\cK^1_{p_1,\theta_1,\Theta_1}(\cD,T)$ is a solution to equation \eqref{stochastic parabolic equation} with the initial condition $u(0,\cdot)=u_0(\cdot)$ and $f^0, \tbf=(f^1,\cdots,f^d)$, $g$, $u_0$ satisfying
\begin{align*}
&f^0\in\bL_{p_j,\theta_j+p_j,\Theta_j+p_j}(\cD,T), \quad \tbf \in\bL_{p_j,\theta_j,\Theta_j}(\cD,T,d ),  
\end{align*}
$$
g\in \bL_{p_j,\theta_j,\Theta_j}(\cD,T,\ell_2), \quad u_0\in\bU^1_{p,\theta,\Theta}(\cD)
$$
for both  $j=1$ and $j=2$,  then  $u\in\cK^1_{p_2,\theta_2,\Theta_2}(\cD,T)$.
\end{lemma}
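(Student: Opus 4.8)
The plan is to exploit the uniqueness part of Theorem \ref{main result-random} (equivalently Theorem \ref{main result}) together with a single overarching regularity statement on both scales of spaces. First I would observe that the hypotheses guarantee, for \emph{each} $j\in\{1,2\}$, that the data $(f^0,\tbf,g,u_0)$ lie in the input spaces required by Theorem \ref{main result-random} with $\gamma=-1$ and parameters $(p_j,\theta_j,\Theta_j)$; indeed $\bK^{-1\vee 0}_{p_j,\theta_j+p_j,\Theta_j+p_j}=\bK^{0}_{p_j,\theta_j+p_j,\Theta_j+p_j}\supseteq\bL_{p_j,\theta_j+p_j,\Theta_j+p_j}$ trivially, and the remaining membership requirements are exactly what is assumed. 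Hence that theorem produces a solution $\tilde u\in\cK^{1}_{p_2,\theta_2,\Theta_2}(\cD,T)$ of equation \eqref{stochastic parabolic equation} with the \emph{same} initial value $u_0$ and the \emph{same} inputs.

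The second step is to identify $u$ with $\tilde u$. Both $u$ and $\tilde u$ solve \eqref{stochastic parabolic equation} in the sense of distributions on $\cD$ with the same $f^0,\tbf,g$ and the same initial data, so $w:=u-\tilde u$ solves the homogeneous equation $dw=\cL w\,dt$ on $\cD$ with $w(0,\cdot)=0$. Now $u\in\cK^{1}_{p_1,\theta_1,\Theta_1}(\cD,T)$ and $\tilde u\in\cK^{1}_{p_2,\theta_2,\Theta_2}(\cD,T)$, so a priori $w$ lives only in $\bK^{1}_{p_1,\theta_1-p_1,\Theta_1-p_1}(\cD,T)+\bK^{1}_{p_2,\theta_2-p_2,\Theta_2-p_2}(\cD,T)$. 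The key point is that the uniqueness argument used in the proof of Theorem \ref{main result} is carried out $\omega$-by-$\omega$: by Lemma \ref{regularity.induction} applied on the $(p_1,\theta_1,\Theta_1)$-scale one upgrades $w$ to $\cK^{2}_{p_1,\theta_1,\Theta_1}(\cD,T)$, so that for a.e. $\omega$ the path $w^{\omega}$ lies in $L_{p_1}((0,T];K^{2}_{p_1,\theta_1-p_1,\Theta_1-p_1}(\cD))$ and solves $w^{\omega}_t=\cL w^{\omega}$, $w^{\omega}(0,\cdot)=0$. Invoking the deterministic uniqueness result \cite[Theorem 2.12]{ConicPDE} on that scale (whose parameters satisfy the required ranges by hypothesis) gives $w^{\omega}=0$ for a.e. $\omega$, hence $u=\tilde u$ as elements of $\cK^{1}_{p_1,\theta_1,\Theta_1}(\cD,T)$.

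Finally, since $u=\tilde u$ and $\tilde u\in\cK^{1}_{p_2,\theta_2,\Theta_2}(\cD,T)$, we conclude $u\in\cK^{1}_{p_2,\theta_2,\Theta_2}(\cD,T)$, which is the assertion. One should also note that one may freely interchange the roles of $j=1$ and $j=2$; the statement is really the assertion that the solution, once it exists on one admissible scale, automatically belongs to every other admissible scale compatible with the given data, and the proof is symmetric in $j$.

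The step I expect to be the main obstacle is the passage from the mere distributional identity $dw=\cL w\,dt$ for $w$ living in a \emph{sum} of two different weighted spaces to a genuine pathwise deterministic solution on one fixed scale to which \cite[Theorem 2.12]{ConicPDE} applies. One must be careful that Lemma \ref{regularity.induction} is applicable: its hypothesis is that $w$ already lies in some $\bK^{\mu+2}_{p_1,\theta_1-p_1,\Theta_1-p_1}(\cD,T)$ with $\mu<\gamma=-1$, i.e. $\mu<-1$; taking $\mu=-2$ requires knowing $w\in\bL_{p_1,\theta_1-p_1,\Theta_1-p_1}(\cD,T)$, which holds since $u\in\cK^{1}_{p_1,\theta_1,\Theta_1}(\cD,T)$ forces $u\in\bL_{p_1,\theta_1-p_1,\Theta_1-p_1}(\cD,T)$ and likewise $\tilde u=$ the $\cK^{1}_{p_2,\theta_2,\Theta_2}$-solution lies in $\bL_{p_1,\theta_1-p_1,\Theta_1-p_1}(\cD,T)$ because it \emph{equals} a $\cK^{1}_{p_1,\theta_1,\Theta_1}$-solution produced by Theorem \ref{main result-random} on the first scale. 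So in fact the cleanest route is: run Theorem \ref{main result-random} on \emph{both} scales to get solutions $\tilde u_1\in\cK^{1}_{p_1,\theta_1,\Theta_1}$ and $\tilde u_2\in\cK^{1}_{p_2,\theta_2,\Theta_2}$, use pathwise deterministic uniqueness to get $u=\tilde u_1=\tilde u_2$ (all three coincide as distributions and each difference is a homogeneous deterministic solution vanishing at $0$), and read off $u\in\cK^{1}_{p_2,\theta_2,\Theta_2}(\cD,T)$.
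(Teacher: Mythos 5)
Your reduction to the identity $u=\tilde u_1=\tilde u_2$ is the right way to frame the lemma, and the first identification $u=\tilde u_1$ is fine (it is just the uniqueness clause of Theorem \ref{main result-random} on the scale $(p_1,\theta_1,\Theta_1)$). The genuine gap is exactly at the point you flag and then do not resolve: the identity $\tilde u_1=\tilde u_2$ when $\cL$ is random. The difference $w=\tilde u_1-\tilde u_2$ does solve the pathwise deterministic homogeneous equation with $w(0,\cdot)=0$, but it only lives in the \emph{sum} $\bK^{1}_{p_1,\theta_1-p_1,\Theta_1-p_1}+\bK^{1}_{p_2,\theta_2-p_2,\Theta_2-p_2}$, and the deterministic uniqueness result \cite[Theorem 2.12]{ConicPDE} (like Lemma \ref{regularity.induction}) is a statement on one fixed weighted scale; it says nothing about a homogeneous solution lying in a sum of two different scales. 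Your attempted repair is circular: you place $\tilde u_2$ in the first scale "because it equals a $\cK^1_{p_1,\theta_1,\Theta_1}$-solution produced by Theorem \ref{main result-random} on the first scale" --- but that equality is precisely the assertion $\tilde u_1=\tilde u_2$ you are trying to prove. When $\cL$ is random there is no representation formula for $\tilde u_1,\tilde u_2$, so nothing in your argument ever produces a single object known to belong to \emph{both} scales.

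That missing bridge is what the paper supplies. It freezes the coefficients at one $\omega_0$, sets $L_0(t)=\cL(\omega_0,t)$, and forms $v=\cR(u_0,f^0,\tbf,g)$ with the Green's function of $L_0$; the kernel estimates of Section 3 apply on each admissible scale to the \emph{same} function $v$, so $v\in\cK^1_{p_1,\theta_1,\Theta_1}(\cD,T)\cap\cK^1_{p_2,\theta_2,\Theta_2}(\cD,T)$. Then $u-v$ solves, pathwise, a deterministic equation whose divergence-form forcing $[a^{ij}(\omega_0,t)-a^{ij}(\omega,t)]v_{x^j}$ inherits membership in both scales from $v$; one produces a second-scale solution $\bar u_2$ of that equation and identifies it with $u-v$ $\omega$-by-$\omega$, using that for a \emph{non-random} operator every solution on either scale coincides with the (scale-independent) representation formula. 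Every comparison of objects on different scales is mediated by such an explicit representation; your proof needs this ingredient (or some substitute for it) to close.
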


\begin{proof}
If $\cL$ is non-random, the lemma follows from  Remark \ref{sol.representation}.  In general, as before we fix  a deterministic operator  $L_0(t)=\sum_{i,j}\alpha^{ij}(t) \in \cT_{\nu_1,\nu_2}$ and  $v=\cR(u_0,f^0,\tbf,g)$.  Then, since $L_0$ is non-random, by Remark \ref{sol.representation}
\begin{equation}
 \label{eqn 8.31.5}
v\in \cK^1_{p_1,\theta_1,\Theta_1}(\cD,T) \cap \cK^1_{p_2,\theta_2,\Theta_2}(\cD,T).
\end{equation}
Put $\bar{u}_1:=u-v$. Then $\bar{u}=\bar{u}_1$ satisfies 
\begin{equation}
  \label{eqn 8.23.1}
d\bar{u}=\left[\cL \bar{u}+\sum_{i=1}^d \Big(\sum_{j=1}^d [\alpha^{ij}(t)-a^{ij}(\omega,t)]v_{x^j}\Big)_{x^i}\right]dt, \quad t\in(0,T].
\end{equation}
Also, due to \eqref{eqn 8.31.5}, equation \eqref{eqn 8.23.1} has a solution $\bar{u}_2\in \cK^1_{p_2,\theta_2,\Theta_2}(\cD,T)$.   Now note that for each fixed 
$\omega$, both $\bar{u}_1(\omega,\cdot,\cdot)$ and  $\bar{u}_2(\omega,\cdot,\cdot)$ satisfy equation  \eqref{eqn 8.23.1}, which we can consider as a deterministic equation with non-random operator.  By the above result for non-random operator we conclude 
$$\bar{u}_1(\omega,\cdot,\cdot)=\bar{u}_2(\omega,\cdot,\cdot)
$$  
for almost all $\omega$. From this we conclude that  both $v$ and $u-v$ are in $\cK^1_{p_2,\theta_2,\Theta_2}(\cD,T)$, and therefore the lemma is proved.
 \end{proof}

\mysection{SPDE on polygonal domains}\label{sec:polygonal domains}

In this section,  based on Theorem~\ref{main result-random}, we develop a regularity theory of the stochastic parabolic equations on polygonal domains in $\bR^2$.  This development is an enhanced version of the corresponding result  in \cite{CKL 2019+} in which 
$\cL=\Delta_x$ and  $\Theta=d$. Our generalization is as follows:
\begin{itemize}

\item{}
$\Delta \quad \rightarrow \quad \cL=\sum_{i,j}a^{ij}(\omega,t)D_{ij}$; operator with (random) predictable  coefficients 

\item{} 
$\Theta=2  \quad \rightarrow  \quad 1<\Theta<1+p$

\item{}
The restriction on  $\theta$ is weakened

\item{}
Sobolev regularity  with $\gamma\in \{-1,0,\cdots\}$\\
$ \quad  \rightarrow \quad$ Sobolev  and H\"older regularities with  real number $ \gamma \geq -1$

\end{itemize}

Let $\cO\subset \bR^2$ be a bounded polygonal domain with a finite number of vertices $\{p_1,\ldots,p_M\}\subset \partial \cO$.
For any $x\in\cO$, we denote
\begin{align*}
\rho(x):=\rho_{\cO}(x):=d(x,\partial\cO).
\end{align*}
In the polygonal domain,  the function  of $x$ defined by
$$
 \min_{1\leq m\leq M}\vert x-p_m\vert 
 $$
 will play the role of $\rho_{\circ, \cD}$, which is the distance to the vertex in an angular domain $\cD$. We  first construct a smooth version of the function $\min_{1\leq m\leq M}\vert x-p_m\vert g$ as follows.  Consider the domain $V:=\bR^2\setminus \{p_1,\cdots,p_M\}$ and note that  
$$\rho_V(x):=d(x,\partial V)=\min_{1\leq m\leq M}\vert x-p_m\vert .
$$
 Then, applying  \eqref{eqn 8.25.1} and \eqref{eqn 8.25.2} for $\rho_V$ and the domain $V$, we define $\psi_{V}$ and set
$$
\rho_{\circ}=\rho_{\circ,\cO}:=\psi_{V}.
$$
We can check that for any multi-index $\alpha$ and $\mu\in \bR$,
$$
\rho_{\circ} \sim \min_{1\leq m\leq M}\vert x-p_m\vert , \quad 
\sup_{\cO}\big\vert  \rho^{\vert \alpha\vert -\mu}_{\circ}D^{\alpha}\rho^{\mu}_{\circ}\big\vert <\infty.
$$
On the other hand, we also choose a smooth function $\psi=\psi_{\cO}$ such that $\psi\sim \rho_{\cO}$ and satisfies \eqref{eqn 8.9.1} with $\rho_{\cO}$ in place of $\rho_{\cD}$. 

Then, we recall the norms of the spaces $H^{\gamma}_{p,\Theta}(\cO)$ and $H^{\gamma}_{p,\Theta}(\cO; \ell_2)$ introduced in Definition \ref{defn 8.28};
 \begin{equation*}
  \|f\|^p_{H^{\gamma}_{p,\Theta}(\cO)}:= \sum_{n\in \bZ} e^{n\Theta} \|\zeta(e^{-n}\psi(e^n\cdot))f(e^{n}\cdot)\|^p_{H^{\gamma}_p(\bR^d)},
  \end{equation*}
  \begin{equation*}
  \|g\|^p_{H^{\gamma}_{p,\Theta}(\cO;\ell_2)}:= \sum_{n\in \bZ} e^{n\Theta} \|\zeta(e^{-n}\psi(e^n\cdot))g(e^{n}\cdot)\|^p_{H^{\gamma}_p(\bR^d;\ell_2)},
  \end{equation*}
where $\psi=\psi_{\cO}$. Using $\rho_{\circ,\cO}$  in place of 
 $\rho_{\circ,\cD}$,  and following Definition \ref{defn 8.19},  we define the function spaces
$$
K^{\gamma}_{p,\theta,\Theta}(\cO),  \quad K^{\gamma}_{p,\theta,\Theta}(\cO;\bR^d), \quad K^{\gamma}_{p,\theta,\Theta}(\cO;\ell_2),
$$ 
as well as the stochastic  spaces
$$\bK^{\gamma}_{p,\theta,\Theta}(\cO,T), \quad \bK^{\gamma}_{p,\theta,\Theta}(\cO,T,d ),\quad \bK^{\gamma}_{p,\theta,\Theta}(\cO,T,\ell_2),
$$
$$
\cK^{\gamma+2}_{p,\theta,\Theta}(\cO,T), \quad \bK^{\infty}_c(\cO,T), \quad \bK^{\infty}_c(\cO,T,\ell_2), \quad \bK^{\infty}_c(\cO).
$$
More specifically,  we write $f\in K^{\gamma}_{p,\theta,\Theta}(\cO)$ if and only if $\rho^{(\theta-\Theta)/p}_{\circ} f\in H^{\gamma}_{p,\Theta}(\cO)$, and define
  $$
  \|f\|_{ K^{\gamma}_{p,\theta,\Theta}(\cO)} := \|\rho^{(\theta-\Theta)/p}_{\circ} f\|_{H^{\gamma}_{p,\Theta}(\cO)}.
  $$
As in Section 2, if $\gamma\in \bN_0$, then we have
\begin{equation}
\label{eqn 8.25.8}
\|f\|^p_{ K^{\gamma}_{p,\theta,\Theta}(\cO)}  \sim \sum_{\vert \alpha\vert \leq \gamma}\int_{\cO} \vert \rho^{\vert \alpha\vert }D^{\alpha}f\vert ^p \rho^{\theta-\Theta}_{\circ}\rho^{\Theta-d} dx.
\end{equation}
\begin{defn}\label{definition solution polygon}
We  write $u\in\cK^{\gamma+2}_{p,\theta,\Theta}(\cO,T)$ if
 $u \in \bK^{\gamma+2}_{p,\theta-p,\Theta-p}(\cO,T)$ and there exist 
 $(\tilde{f}, \tilde{g}) \in\bK^{\gamma}_{p,\theta+p,\Theta+p}(\cO,T)\times \bK^{\gamma+1}_{p,\theta,\Theta}(\cO,T, \ell_2)$ and $u(0,\cdot)\in \bU^{\gamma+2}_{p,\theta,\Theta}(\cO)$ satisfying  
$$
du=\tilde{f}\,dt+\sum_k \tilde{g}^kdw^k_t,\quad t\in(0,T] 
$$
in the sense of distributions on $\cO$. The norm is defined by 
\begin{eqnarray*}
\|u\|_{\cK^{\gamma+2}_{p,\theta,\Theta}(\cO,T)}&:=&\|u\|_{\bK^{\gamma+2}_{p,\theta-p,\Theta-p}(\cO,T)}+\|\tilde{f}\|_{\bK^{\gamma}_{p,\theta+p,\Theta+p}(\cO,T)}
+\|\tilde{g}\|_{\bK^{\gamma+1}_{p,\theta,\Theta}(\cO,T,\ell_2)}\\
&&+\|u(0,\cdot)\|_{\bU^{\gamma+2}_{p,\theta,\Theta}(\cD)}.
\end{eqnarray*}
\end{defn}

  \begin{thm}
  \label{thm all}
  With $\cD$ replaced by $\cO$, all the claims of Lemma \ref{property1}, Remark \ref{dense space}, Theorem   \ref{banach}, Theorem \ref{embedding}, and  Lemma \ref{regularity.induction}  hold.  \end{thm}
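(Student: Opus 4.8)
The plan is to observe that all of the cited results (Lemma \ref{property1}, Remark \ref{dense space}, Theorem \ref{banach}, Theorem \ref{embedding}, and Lemma \ref{regularity.induction}) were proved using \emph{only} the structural properties of the weighted spaces $K^{\gamma}_{p,\theta,\Theta}$ and $H^{\gamma}_{p,\Theta}$ encoded in their definitions, together with the two basic facts about the auxiliary functions $\psi$ and $\rho_{\circ}$, namely
\begin{equation*}
\psi_{\cO}\sim\rho_{\cO},\qquad \rho^{m}_{\cO}|D^{m+1}\psi_{\cO}|\le N(m),\qquad
\rho_{\circ,\cO}\sim\min_{1\le m\le M}|x-p_m|,\qquad \sup_{\cO}\rho^{|\alpha|-\mu}_{\circ}|D^{\alpha}\rho^{\mu}_{\circ}|<\infty.
\end{equation*}
These were exactly the ingredients quoted for the conic case; none of the proofs used that $\cD$ was a cone, that $\cD$ was unbounded, or the $C^2$-regularity of $\cM$. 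So the first step is to verify, which was already done in the paragraphs preceding the statement, that $\psi_{\cO}$ and $\rho_{\circ,\cO}$ constructed for the polygon satisfy precisely these estimates; then the spaces $K^{\gamma}_{p,\theta,\Theta}(\cO)$, $\bK^{\gamma}_{p,\theta,\Theta}(\cO,T)$, $\cK^{\gamma+2}_{p,\theta,\Theta}(\cO,T)$, etc., are defined verbatim as in Section 2.

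Next I would go through the five results one at a time and point to where literal transcription works. For Lemma \ref{property1}: parts (i)--(iv) and (vi) follow from \cite[Proposition 2.2, Theorem 3.1, Theorem 4.3]{Lo1} and the pointwise multiplier bound \eqref{eqn 8.28.4}, whose analogue on $\cO$ is the last displayed estimate above; part (v) is the same algebraic manipulation $\xi Df=D(\xi f)-\xi(\xi^{-1}D\xi)f$ with $\xi=\rho_{\circ,\cO}^{(\theta-\Theta)/p}$, using $|\psi_{\cO}\xi^{-1}D\xi|^{(0)}_m<\infty$, which again holds because of the bounds on $\psi_{\cO}$ and $\rho_{\circ,\cO}$. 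For Remark \ref{dense space}, the density of $C^\infty_c(\cO)$ in $K^\gamma_{p,\theta,\Theta}(\cO)$ is Lemma \ref{property1}(ii) for $\cO$, and the predictable-step-function approximation is measure-theoretic and domain-independent. Theorem \ref{banach} is quoted from \cite[Remark 3.8]{Krylov 2001} whose argument the authors already note is ``quite universal'' and works on any domain. Theorem \ref{embedding} reduces, via the dilation-and-partition-of-unity identities \eqref{eqn 8.10.14}--\eqref{eqn ell_2} and $v=\xi u$, to the model estimates \cite[Corollary 4.12, Remark 4.14]{Krylov 2001} on $\bR^d$ applied to each rescaled piece $v_n$; the only place $\cD$ entered was through the equivalence \eqref{eqn 8.19.81} and $\psi$-bounds, all of which persist for $\cO$. (One should note in passing that since $\cO$ is \emph{bounded}, $\psi_{\cO}$ is bounded, so the extra hypothesis ``$\psi^{\beta-1}u(0,\cdot)\in L_p(\Omega;K^{\gamma+2-\beta}_{p,\theta,\Theta}(\cO))$'' in \eqref{Holder2} is automatic, as Remark \ref{additional condition initial} anticipates; this only simplifies the statement.)

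The substantive item is Lemma \ref{regularity.induction}, the localized a priori estimate. Its proof rescales $v=\xi u$ by the dyadic family $e^n$, freezes onto a fixed ball via the cutoffs $\zeta(e^{-n}\psi(e^n\cdot))$, obtains on $\bR^d$ the estimate of Lemma \ref{lemma entire}, and sums up using the norm-equivalences \eqref{eqn 8.19.31}, \eqref{eqn 8.19.41}, \eqref{eqn 8.19.51}. Lemma \ref{lemma entire} itself is about SPDEs on all of $\bR^d$ with coefficients depending only on $(\omega,t)$ and is obviously domain-free. The summation identities depend only on Definition \ref{defn 8.28} and the multiplier bounds for $\psi_{\cO},\xi$, which hold on $\cO$; crucially the error terms $f^l_n$ arising from commuting $\cL$ past the cutoffs involve $D\psi_{\cO},D^2\psi_{\cO},D\xi,D^2\xi$ and are controlled exactly as before. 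So this too transcribes. The main (minor) obstacle is therefore purely bookkeeping: one must check that at no point in these proofs is a property of $\cD$ used beyond the list above — in particular that boundedness of $\cO$ never \emph{breaks} anything (it does not; it only trivializes one hypothesis) and that having several vertices rather than one is invisible to the argument (it is, since $\rho_{\circ,\cO}$ is a single smooth function regardless of $M$). I would therefore write the proof as: ``All the cited proofs use only Definitions \ref{defn 8.28}, \ref{defn 8.19}, \ref{first spaces}, the multiplier bounds on $\psi$ and $\rho_{\circ}$, the equivalences \eqref{eqn 8.9.7}, \eqref{eqn 8.9.8}, \eqref{eqn 8.19.81}, and the $\bR^d$-results Lemma \ref{lemma entire}, \cite[Corollary 4.12, Remark 4.14, Remark 3.8]{Krylov 2001}. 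Since $\psi_{\cO}$ and $\rho_{\circ,\cO}$ satisfy the required bounds and none of those inputs refers to the geometry of the underlying domain, repeating the arguments verbatim with $\cO$ in place of $\cD$ gives the claims; boundedness of $\cO$ makes $\psi_{\cO}$ bounded and hence renders the extra hypothesis in \eqref{Holder2} automatic.''
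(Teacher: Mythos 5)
Your proposal is correct and follows essentially the same route as the paper, whose proof is a one-sentence observation that all the cited claims rest only on the definitions \eqref{eqn 8.10.14}, \eqref{eqn 8.10.1} and properties of $H^{\gamma}_{p,\Theta}$ from \cite{Lo1} that hold on arbitrary domains, so the proofs transfer verbatim. Your version simply carries out the bookkeeping explicitly (including the correct observation that boundedness of $\cO$ trivializes the extra hypothesis in \eqref{Holder2}), which is a more detailed rendering of the same argument.
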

  \begin{proof}
  All of these claims in Section 2 are proved based  on   \eqref{eqn 8.10.14},   \eqref{eqn 8.10.1}, and some properties of weighted Sobolev spaces $H^{\gamma}_{p,\Theta}(\cD)$ taken e.g from \cite{Lo1}.  Since these properties in \cite{Lo1} hold true on arbitrary domains, the exactly same proofs of Section 2 work with $\cD$ replaced by $\cO$. 
  \end{proof}
  
\begin{remark}
\label{remark 8.29}
For the analog of Theorem \ref{embedding} in the case of polygonal domain we do not need the additional condition for the initial condition. This is because since $\psi$ is bounded and $\beta>2/p$, by Lemma \ref{property1} (iv),  we have
$$
 \|\psi^{\beta-1}u(0,\cdot)\|_{L_p(\Omega;K^{\gamma+2-\beta}_{p,\theta,\Theta}(\cD))}\leq C \|\psi^{2/p-1}u(0,\cdot)\|_{L_p(\Omega;K^{\gamma+2-2/p}_{p,\theta,\Theta}(\cD))} \leq C \|u\|_{\cK_{p,\theta,\Theta}^{\gamma+2}}.
 $$

\end{remark}
  
For $m=1,\ldots,M$, let $\kappa_m$ denote the interior angle at the vertex $p_m$, and denote
\begin{equation*}
\kappa_0:=\max_{1\leq m\leq M}\kappa_m.
\end{equation*}
Also, for each $m$,  let $\cD_m$ denote the conic domain in $\bR^2$ such that 
$$
\cO \cap B_{\varepsilon}(p_m) \cap \{p_m+x: x\in \cD_m\} = \cO \cap B_{\varepsilon}(p_m)
$$
for all sufficiently small $\varepsilon>0$. Denote 
$$\lambda^{\pm}_{c,\cL,\cO}:=\min_{m}\lambda^{\pm}_{c,\cL, \cD_m} \quad \text{if $\cL$ is non-random}
$$
and 
$$\lambda_{c,\cO}(\nu_1,\nu_2):=\min_{m}\lambda_c(\nu_1,\nu_2,\cD_m)\quad \text{ if $\cL$ is random}.
$$
In Theorem \ref{main result polygon} below, we pose  the condition
\begin{equation}
\label{theta poly}
p(1-\lambda^+_{c,\cL,\cO})<\theta< p(1+\lambda^-_{c,\cL,\cO})
\end{equation}
if $\cL$ is non-random, and 
\begin{equation}
  \label{theta poly2}
  p(1-\lambda_{c,\cO}(\nu_1,\nu_2))<\theta <p(1+\lambda_{c,\cO}(\nu_1,\nu_2))
  \end{equation}
  if $\cL$ is random.  

  Here are our main results on polygonal domains. 
  
\begin{thm}[SPDE on polygonal domains with random or non-random coefficients]
\label{main result polygon}
Let $p\in[2,\infty)$, $\gamma \geq -1$,  and Assumption \ref{ass coeff} hold.  Also assume that
\begin{equation}
  \label{theta application}
1<\Theta<p+1,
\end{equation} 
and condition \eqref{theta poly} holds if $\cL$ is non-random, condition \eqref{theta poly2} holds if $\cL$ is random.  
Then for given $f^0\in\bK^{\gamma \vee 0}_{p,\theta+p,\Theta+p}(\domain,T)$, $\tbf=(f^1,\cdots,f^d) \in\bK^{\gamma +1}_{p,\theta,\Theta}(\domain,T,d)$,  $g\in\bK^{\gamma+1}_{p,\theta,\Theta}(\domain,T,\ell_2)$, and $u_0\in\bU^{\gamma+2}_{p,\theta,\Theta}(\cO)$, the equation
\begin{equation}\label{stochastic parabolic equation polygon}
d u =\left(\cL u+f^0+\sum_{i=1}^d f^i_{x^i}\right)dt +\sum^{\infty}_{k=1} g^kdw_t^k,\quad   t\in(0,T]\quad\,; \quad u(0,\cdot)=u_0
\end{equation}
admits  a unique solution $u$  in the class $\cK^{\gamma+2}_{p,\theta,\Theta}(\domain,T)$.
Moreover, the estimate
\begin{eqnarray*}
\|u\|_{\cK^{\gamma+2}_{p,\theta,\Theta}(\domain,T)}
&\leq& C\big(\|f^0\|_{\bK^{\gamma\vee 0}_{p,\theta+p,\Theta+p}(\domain,T)}
+\|\tbf\|_{\bK^{\gamma+1}_{p,\theta,\Theta}(\domain,T,d)}+\|g\|_{\bK^{\gamma+1}_{p,\theta,\Theta}(\domain,T,\ell_2)}\nonumber\\
&&\quad\quad+\|u_0\|_{\bU^{\gamma+2}_{p,\theta,\Theta}(\cO)}\big)
\end{eqnarray*}
holds with a constant $C=C(\domain,p,\gamma,\nu_1,\nu_2,\theta,\Theta,T)$.
\end{thm}

\begin{remark}
Since $d=2$ in this section, the range of $\Theta$ in \eqref{theta application} coincides with $(d-1,d-1+p)$ which we have kept throughout this article.
\end{remark}

\begin{thm}[H\"older estimates on polygonal domains]
\label{cor 8.23}
Let $p\geq 2$, $\theta, \Theta\in \bR$ and $u\in \cK^{\gamma+2}_{p,\theta,\Theta}(\cO,T)$.

(i) If $\gamma+2-\frac{d}{p}\geq n+\delta$, where $n\in \bN_0$ and $\delta\in (0,1)$, then for any $k\leq n$, 
$$
\vert \rho^{k-1+\frac{\Theta}{p}} \rho^{(\theta-\Theta)/p}_{\circ} D^{k}u(\omega,t,\cdot)\vert _{\cC(\cO)}+
 [\rho^{n-1+\delta+\frac{\Theta}{p}} \rho^{(\theta-\Theta)/p}_{\circ} D^{k} u(\omega,t,\cdot)]_{\cC^{\delta}(\cO)}<\infty 
$$
holds for almost all $(\omega,t)$.  In particular, 
\begin{equation*}
 \vert u(\omega,t,x)\vert \leq C(\omega,t) \rho^{1-\frac{\Theta}{p}}(x) \rho^{(-\theta+\Theta)/p}_{\circ}(x).
\end{equation*}

(ii) Let
$$
2/p<\alpha<\beta\leq 1, \quad \gamma+2-\beta-d/p \geq m+\varepsilon,
$$
where $m\in \bN_0$ and $\varepsilon\in (0,1]$.  Put $\eta=\beta-1+\Theta/p$. Then for any $k\leq m$,
\begin{eqnarray*}
&&\bE \sup_{t,s\leq T}  \frac {\big\vert \rho^{\eta+k}  \rho^{(\theta-\Theta)/p}_{\circ} \left(D^ku(t)-D^ku(s)\right)\big\vert ^p_{\cC(\cO)}}
{\vert t-s\vert ^{p\alpha/2-1}}<\infty, \\
&& \bE \sup_{t,s\leq T}  \frac {\left[\rho^{\eta+m+\varepsilon}  \rho^{(\theta-\Theta)/p}_{\circ} \left(D^mu(t)-D^mu(s)\right)\right]^p_{\cC^{\varepsilon}(\cO)}}
{\vert t-s\vert ^{p\alpha/2-1}} <\infty. 
\end{eqnarray*}

\end{thm}
\begin{proof}
The claims follow from the corresponding results of \eqref{eqn 8.21.1} and \eqref{eqn 8.10.10} mentioned in Theorem \ref{thm all}.
\end{proof}

For the proof of  Theorem \ref{main result polygon}, we first prove the following estimate.

\begin{lemma}[A priori estimate]
\label{a priori p}
Let Assumptions in Theorem \ref{main result polygon} hold. Then there exists a constant $C=C(d,p,\theta,\Theta,\nu_1,\nu_2,\cO,T)$ such that  the a priori estimate
\begin{eqnarray}
\|u\|_{\cK^{\gamma+2}_{p,\theta,\Theta}(\domain,T)}&\leq& C\big(\|f^0\|_{\bK^{\gamma\vee 0}_{p,\theta+p,\Theta+p}(\domain,T)}
+\|\tbf\|_{\bK^{\gamma+1}_{p,\theta,\Theta}(\domain,T,d)}+\|g\|_{\bK^{\gamma+1}_{p,\theta,\Theta}(\domain,T,\ell_2)}\nonumber\\
&&\quad\quad+ \|u_0\|_{\bU^{\gamma+2}_{p,\theta,\Theta}(\cO)}\big)\label{polygon a priori}
\end{eqnarray}
holds provided  that a solution $u\in \cK^{\gamma+2}_{p,\theta,\Theta}(\cD,T)$ to equation \eqref{stochastic parabolic equation polygon}  exists. 
\end{lemma}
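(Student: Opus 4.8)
The plan is to reduce the a priori estimate on the polygonal domain $\cO$ to the conic–domain estimate of Theorem~\ref{main result-random} via a partition of unity adapted to the vertices. First I would fix a collection of cut-off functions $\{\varphi_m\}_{m=0}^{M}$ on $\cO$ where, for $m\ge 1$, $\varphi_m$ is supported in a small ball $B_{\varepsilon}(p_m)$ on which $\cO$ coincides with the translated cone $p_m+\cD_m$, and $\varphi_0$ is supported away from all vertices so that $\operatorname{supp}\varphi_0\cap\partial\cO$ lies in the smooth ($C^2$) part of the boundary; arrange $\sum_m\varphi_m=1$ on $\cO$ and $\rho_{\cO}^{|\alpha|}|D^{\alpha}\varphi_m|\le N$. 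Then $u_m:=\varphi_m u$ satisfies an SPDE of the same type but with extra lower–order forcing produced by the commutator $[\cL,\varphi_m]u = \sum_{ij}a^{ij}(2D_i\varphi_m D_j u + u D_{ij}\varphi_m)$, which I would absorb into the $f^0,f^i$ terms after rewriting $a^{ij}D_i\varphi_m D_j u$ in divergence form; because $\varphi_m$ is supported near $p_m$, one checks that $\rho_{\circ,\cO}\sim |x-p_m|$ there, so the weighted norms of these commutator terms on $\cO$ are comparable to the corresponding weighted norms on $\cD_m$ and are controlled by $\|u\|_{\bK^{\gamma+1}_{p,\theta-p,\Theta-p}(\cO,T)}$.

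Next, for each $m\ge 1$, I would transplant $u_m$ to the cone $\cD_m$ (via the translation $x\mapsto x+p_m$ and a trivial extension, since $u_m$ vanishes near $\partial\cD_m\setminus$ the two edges through $0$) and apply Theorem~\ref{main result-random} on $\cD_m$; condition~\eqref{theta poly2} together with $\lambda_{c,\cO}(\nu_1,\nu_2)=\min_m\lambda_c(\nu_1,\nu_2,\cD_m)$ guarantees the hypothesis on $\theta$ is met for every $m$, and $1<\Theta<p+1=d-1+p$ since $d=2$. For $m=0$, $u_0$ (with a mild abuse of notation for the cut-off piece) solves an SPDE on a domain whose support touches only the smooth part of $\partial\cO$, so the half-space / $C^1$-domain theory — available through the $H^{\gamma}_{p,\Theta}$ machinery referenced in the paper (equivalently, Theorem~\ref{main result} applied on a smooth domain, or the results of \cite{Kim2004,KK2004}) — yields the estimate there. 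Summing the $M+1$ estimates and using the finite-overlap property of the cut-offs gives
\begin{eqnarray*}
\|u\|_{\cK^{\gamma+2}_{p,\theta,\Theta}(\cO,T)}
&\le& N\big(\|f^0\|_{\bK^{\gamma\vee 0}_{p,\theta+p,\Theta+p}(\cO,T)}+\|\tbf\|_{\bK^{\gamma+1}_{p,\theta,\Theta}(\cO,T,d)}\\
&&\quad+\|g\|_{\bK^{\gamma+1}_{p,\theta,\Theta}(\cO,T,\ell_2)}+\|u_0\|_{\bU^{\gamma+2}_{p,\theta,\Theta}(\cO)}+\|u\|_{\bK^{\gamma+1}_{p,\theta-p,\Theta-p}(\cO,T)}\big).
\end{eqnarray*}

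Finally I would remove the undesirable term $\|u\|_{\bK^{\gamma+1}_{p,\theta-p,\Theta-p}(\cO,T)}$ on the right. Since $\cO$ is bounded, $\bK^{\gamma+1}_{p,\theta-p,\Theta-p}(\cO,t)$ embeds with the temporal estimate \eqref{eqn 8.25.31} of Theorem~\ref{embedding} (valid on $\cO$ by Theorem~\ref{thm all}), so $\|u\|^p_{\bK^{\gamma+1}_{p,\theta-p,\Theta-p}(\cO,t)}\le N\int_0^t\|u\|^p_{\cK^{\gamma+2}_{p,\theta,\Theta}(\cO,s)}\,ds$; feeding this back into the displayed inequality applied on $(0,t)$ for $t\le T$ and invoking Grönwall's lemma closes the estimate, at the cost of letting $N$ depend on $T$, exactly as stated. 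I expect the main obstacle to be the bookkeeping in the partition-of-unity step: verifying that every commutator term lands in the correct weighted space $\bK^{\gamma}_{p,\theta+p,\Theta+p}$ or $\bK^{\gamma+1}_{p,\theta,\Theta}$ with norm bounded by $\|u\|_{\bK^{\gamma+1}_{p,\theta-p,\Theta-p}}$ — this uses Lemma~\ref{property1}(iv)–(v) and the equivalence $\rho_{\circ,\cO}\sim|x-p_m|$ near $p_m$ — and, relatedly, making sure the localized pieces genuinely fall under the hypotheses of Theorem~\ref{main result-random} on each $\cD_m$ and under the smooth-boundary theory for the piece $\varphi_0 u$. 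The Grönwall step and the cone estimates themselves are then essentially routine given the results already established.
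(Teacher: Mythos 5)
Your overall architecture --- a partition of unity isolating each vertex, the conic-domain estimate of Theorem \ref{main result-random} applied to each localized piece on $\cD_m$, the $C^1$-domain theory for the piece supported away from the vertices, and a Gr\"onwall absorption of the commutator contribution --- is exactly the paper's strategy. The one structural difference is that you work directly at regularity level $\gamma$, whereas the paper first closes the estimate at the lowest level $\gamma=-1$ (where the commutator data live in plain weighted $L_p$-spaces) and only afterwards lifts to general $\gamma$ with the polygonal version of Lemma \ref{regularity.induction}; your direct route is viable in principle but puts all the weight on the multiplier estimates of Lemma \ref{property1}(iv) for fractional $\gamma$.

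There is, however, a concrete gap in your absorption step. You bound the commutator terms by $\|u\|_{\bK^{\gamma+1}_{p,\theta-p,\Theta-p}(\cO,T)}$ and then claim $\|u\|^p_{\bK^{\gamma+1}_{p,\theta-p,\Theta-p}(\cO,t)}\leq N\int_0^t\|u\|^p_{\cK^{\gamma+2}_{p,\theta,\Theta}(\cO,s)}\,ds$ by \eqref{eqn 8.25.31}. That is not what \eqref{eqn 8.25.31} says: the sup-in-time embedding of Theorem \ref{embedding}(ii) controls $\|u(t)\|_{K^{\gamma+1}_{p,\theta,\Theta}(\cO)}$, i.e.\ one derivative fewer \emph{and} weights shifted by $+p$, whereas $K^{\gamma+1}_{p,\theta-p,\Theta-p}(\cO)$ carries the same boundary weight as the solution space; on the bounded domain $\cO$ it is the strictly stronger norm, since $\|v\|_{K^{\gamma+1}_{p,\theta,\Theta}(\cO)}\sim\|\psi v\|_{K^{\gamma+1}_{p,\theta-p,\Theta-p}(\cO)}\le N\|v\|_{K^{\gamma+1}_{p,\theta-p,\Theta-p}(\cO)}$ by \eqref{eqn 8.19.81} and the multiplier property, and it is not dominated by the right-hand side of \eqref{eqn 8.25.31}. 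So Gr\"onwall does not close as written. The fix is simply not to over-bound: after the divergence-form rewriting the commutator data are $f^i_m=2\sum_j a^{ij}u(\xi_m)_{x^j}$ and $-u\,\cL\xi_m$, i.e.\ $u$ times bounded smooth multipliers supported where $\rho_{\circ}\sim 1$, so their norms in the forcing spaces $\bK^{\gamma+1}_{p,\theta,\Theta}$ and $\bK^{\gamma\vee 0}_{p,\theta+p,\Theta+p}$ are bounded by $N\|u\|_{\bK^{\gamma+1}_{p,\theta,\Theta}(\cO,T)}$ (indeed by $N\|u\|_{\bL_{p,\theta,\Theta}(\cO,T)}$ at the $\gamma=-1$ level, which is how the paper phrases it), and this is precisely the quantity that \eqref{eqn 8.25.31} bounds by $\int_0^t\|u\|^p_{\cK^{\gamma+2}_{p,\theta,\Theta}(\cO,s)}\,ds$. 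With that correction your argument coincides with the paper's proof.
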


\begin{proof}
First, choose a sufficiently small constant $r>0$ such that each $B_{3r}(p_m)$ contains only one vertex $p_m$ and intersects with only two edges for each $m=1,\,\ldots,\,M$. Then we choose a function $\xi\in\cC_c^{\infty}(\bR^2)$ satisfying
\begin{align*}
1_{B_r(0)}(x)\leq \xi(x)\leq 1_{B_{2r}(0)}(x)\quad\text{for all }x\in\bR^2.
\end{align*}

Let $\xi_m(x):=\xi(x-p_m)$ and $\xi_0:=1-\sum_{m=1}^M\xi_m$.
By the choice of $r$ and $\xi$, $supp(\xi_m)$s are disjoint and hence $0\leq \xi_0\leq 1$.
Moreover, $\xi_0(x)=1$ if $\rho_{V}(x)>2r$.

For $m=1,\ldots,M$, let $\cD_m$ be the angular (conic) domain centered at $p_m$ with interior angle $\kappa_m$ such that $\cD_m\cap B_{3r}(p_m)=\cO\cap B_{3r}(p_m)$.

Now let $G$ be a $C^1$-domain in $\cO$ such that
$$
\xi_0(x)=0\quad\text{for }x\in\cO\setminus G\quad\text{and}\quad \inf_{x\in G} \rho_{\circ}(x)\geq c>0\text{ with a constant}\; c.
$$
Then, due to the choices of  $\xi_m$ and  $\cD_m$ $(m=1,\ldots,M)$,    \eqref{space K norm} and \eqref{eqn 8.25.8}  together easily yield
\begin{align*}
\|\xi_m v\|^p_{K^n_{p,\theta,\Theta}(\cO)} \sim\|\xi_m v\|_{K^n_{p,\theta,\Theta}(\cD_m)},\quad m=1,\ldots,M.
\end{align*}
for any $\theta,\Theta\in\bR$, $n\in\{0,1,2,\ldots\}$, and $v\in K^n_{p,\theta,\Theta}(\cO)$.  Similarly,
$$
\|\xi_0 v\|^p_{K^n_{p,\theta,\Theta}(\cO)}\sim \int_{G} \vert \xi_0v\vert ^p \rho^{\Theta-d}dx\sim \|\xi_0 v\|^p_{H^n_{p,\Theta}(G)},
$$
and the same relations hold for $\ell_2$-valued functions. Denote
$$
\bH^{\gamma}_{p,\Theta}(G,T):=L_p(\Omega\times (0,T], \cP; H^{\gamma}_{p,\Theta}(G)),
$$
$$
 \bH^{\gamma}_{p,\Theta}(G,T,\ell_2):=L_p(\Omega\times (0,T], \cP; H^{\gamma}_{p,\Theta}(G;\ell_2)).
$$
Then, the above observations  in particular imply
\begin{align}\label{partition-of-unity.eq}
\|v\|_{\bK^n_{p,\theta,\Theta}(\cO,T)} \sim \Big(\|\xi_0 v\|_{\bH^n_{p,\Theta}(G,T)}+\sum_{m=1}^M\|\xi_m v\|_{\bK^n_{p,\theta,\Theta}(\cD_m,T)}\Big)
\end{align}
for any $v\in \bK^n_{p,\theta,\Theta}(\cO,T)$, where $n\in \{0,1,2,\cdots\}$. 

Now, for each $m=1,\ldots,M$ we define $u_m:=\xi_mu$. Then, since $\gamma+2\geq 1$,  $u_m$ belongs to $\bK^{1}_{p,\theta-p,\Theta-p}(\cD_m,T)$. Also, $\xi_0 u$ belongs to $\bH^{1}_{p,\Theta-p}(G,T)$.  Note that each $u_m$ satisfies
\begin{equation}\label{equation for m}
d(u_m)=\Big(\cL u_m+f^0_m+\sum_{i=1}^d (f^i_m)_{x^i}\Big)dt+\sum_k g^{k}_m dw_t^k,\quad t\in (0,T]
\end{equation}
in the sense of distributions on  $\cD_m$ with the initial condition $
u_m(0,\cdot)=\xi_m u_0$ and $\xi_0 u$ satisfies 
\begin{equation}\label{equation for m=0}
d(\xi_0 u)=\Big(\cL (\xi_0 u)+f^0_0+\sum_{i=1}^d (f^i_0)_{x^i}\Big)dt+\sum_k g^{k}_0 dw_t^k,\quad t\in (0,T]
\end{equation}
in the sense of distributions on $G$ with the initial condition $w(0,\cdot)=\xi_0 u_0$,  where
\begin{equation}\label{f g for m}
f^0_m=f^0\xi_m-\sum_{i=1}^d f^i (\xi_m)_{x^i}-u \cL(\xi_{m}), \quad f^i_m=2\sum_{j=1}^d a^{ij}u\,(\xi_{m})_{x^j},\quad
g_m=\, g\xi_m
\end{equation}
for $m=0,1,2,\ldots,M$.

Since $supp(\xi_m) \subset \overline{B_{2r}(p_m)}$ and $(\xi_m)_x=0$ on a neighborhood of $p_m$ for $m=1,\ldots,M$, we have
\begin{align*}
\|u (\xi_m)_x\|_{\bL_{p,\theta,\Theta}(\cO,t)}+\|u(\xi_m)_{xx}\|_{\bL_{p,\theta+p,\Theta+p}(\cO,t)}\leq C\|u\|_{\bL_{p,\theta,\Theta}(\cO,t)}
\end{align*}
for $t\leq T$, where $C$ depends only on $\cO,\,p,\,\theta$ and $\Theta$.

Hence, for $m=1,\,\ldots,\,M$, by Theorems \ref{main result} and \ref{main result-random}, which our range of $\theta$ allows us to use,   we have for any $t\leq T$,
\begin{align*}
&\quad \quad \|\xi_m u\|_{\bK^1_{p,\theta-p,\Theta-p}(\cD_m,t)}
\\
& \leq C\big(\|f^0_m\|_{\bL_{p,\theta+p,\Theta+p}(\cD_m,t)}+ \sum_{i=1}^d \|f^i_m\|_{\bL_{p,\theta,\Theta}(\cD_m,t)}+\|g_m\|_{\bL_{p,\theta,\Theta}(\cD_m,t,\ell_2)}\nonumber\\
&\hspace{1cm}+\|\xi_m u_0\|_{\bU^1_{p,\theta,\Theta}(\cD_m)}\big)
\\
& \leq C\big(\|u\|_{\bL_{p,\theta,\Theta}(\cO,T)}+\|f^0\|_{\bL_{p,\theta+p,\Theta+p}(\cO,T)}
+\|\tbf\|_{\bL_{p,\theta,\Theta}(\cO,T,d)} +\|g\|_{\bL_{p,\theta,\Theta}(\cO,T,\ell_2)}\\
&\hspace{1cm}+\|u_0\|_{\bU^1_{p,\theta,\Theta}(\cO)}\big).
\end{align*}
  
  For $m=0$, by  \cite[Theorem 2.7]{Kim2004-2} (or \cite[Theorem 2.9]{Kim2004}),  
 we  have
\begin{align*}
&\quad \quad\quad \|\xi_0 u\|_{\bH^1_{p,\Theta-p}(G,t)}\\
&\leq C\Big(\|f^0_0\|_{\bL_{p,\Theta+p}(G,t)}+ \sum_{i=0}^d \|f^i_0\|_{\bL_{p,\Theta}(G,t)}+\|g_0\|_{\bL_{p,\Theta}(G,t,\ell_2)}+\|\xi_0u_0\|_{L_p(\Omega;H^{1-2/p}_{p,\Theta+2-p}(G))}\Big)
\\
&\leq C\Big(\|u\|_{\bL_{p,\theta,\Theta}(\cO,T)}+\|f^0\|_{\bL_{p,\theta+p,\Theta+p}(\cO,T)}
+\|\tbf\|_{\bL_{p,\theta,\Theta}(\cO,T,d)} +\|g\|_{\bL_{p,\theta,\Theta}(\cO,T,\ell_2)}\\
&\hspace{1cm}+ \|u_0\|_{\bU^1_{p,\theta,\Theta}(\cO)}\Big).
\end{align*}
Summing up over all $m=0,\,\ldots,\,M$ and using \eqref{partition-of-unity.eq}, for each $t\leq T$, we have
\begin{align*}
&\quad\quad\|u\|_{\bK^1_{p,\theta-p,\Theta-p}(\cO,t)}
\\
&\leq C \Big(\|u\|_{\bL_{p,\theta,\Theta}(\cO,t)}+\|f^0\|_{\bL_{p,\theta+p,\Theta+p}(\cO,T)}
+\|\tbf\|_{\bL_{p,\theta,\Theta}(\cO,T)} +\|g\|_{\bL_{p,\theta,\Theta}(\cO,T,\ell_2)}\\
&\hspace{1cm}  \|u_0\|_{\bU^1_{p,\theta,\Theta}(\cO)}\Big).
\end{align*}
Using this and  the polygonal versions of \eqref{eqiv norm} and \eqref{eqn 8.25.31},  which mentioned in Theorem \ref{thm all},  we get, for each $t\leq T$,
\begin{align*}
&\quad\quad \|u\|^p_{\cK^1_{p,\theta,\Theta}(\cO,t)}\\
\leq &\,C\int^t_0\|u\|^p_{\cK^1_{p,\theta,\Theta}(\cO,s)}ds
\\
&+C\left(\|f^0\|^p_{\bL_{p,\theta+p,\Theta+p}(\cO,T)}+\|\tbf\|^p_{\bL_{p,\theta,\Theta}(\cO,T)}+\|g\|^p_{\bL_{p,\theta,\Theta}(\cO,T,\ell_2)}+\|u_0\|_{\bU^1_{p,\theta,\Theta}(\cO)}\right).
\end{align*}
Applying Gronwall's inequality, we further obtain
\begin{align*}
&\quad\quad \|u\|_{\cK^1_{p,\theta,\Theta}(\cO,T)}\\
&\leq C\left(\|f^0\|_{\bL_{p,\theta+p,\Theta+p}(\cO,T)}+\|\tbf\|_{\bL_{p,\theta,\Theta}(\cO,T)}+\|g\|_{\bL_{p,\theta,\Theta}(\cO,T,\ell_2)}+\|u_0\|_{\bU^1_{p,\theta,\Theta}(\cO)}\right).
\end{align*}
This and the polygonal version of Lemma \ref{regularity.induction}, which is mentioned in Theorem \ref{thm all},  yield a priori estimate \eqref{polygon a priori}.
The lemma is proved.
\end{proof}


The following is a $\cC^1$-domain version of Lemma \ref{global.uniqueness}. We use it in the  proof of Theorem \ref{main result polygon} below.

\begin{lemma}\label{lem for uniqueness2}
Let $G$ be a bounded $\cC^1$ domain in $\bR^d$ and let $p_j\in[2,\infty)$, $\Theta_j\in (d-1,d-1+p_j)$ for $j=1,2$. Assume that $u\in \bH^1_{p_1,\Theta_1-p_1}(G,T)$
satisfies
\begin{align*}
du=\Big(\cL u+f^0+\sum_{i=1}^d f^i_{x^i}\Big)\,dt+\sum_kg^kdw^k_t,\quad t\in(0,T]\quad 
\end{align*}
 in the sense of distributions on $G$ with the initial condition $u(0,\cdot)=u_0(\cdot)$ and $f^0$, $f^i$ ($i=1,2,\ldots,$), $g$, $u_0$ satisfying
$$
f^0\in \bL_{p_j,\Theta_j+p_j}(G,T)\cap \bL_{p_j,d+p_j}(G,T),\quad f^i \in \bL_{p_j,\Theta_j}(G,T)\cap \bL_{p_j,d}(G,T), \, i=1,\cdots,d,
$$
$$
g\in \bL_{p_j,\Theta_j}(G,T,\ell_2)\cap \bL_{p_j,d}(G,T,\ell_2),
$$
$$
\quad u_0\in L_p(\Omega,\rF_0;H^{1-2/p_j}_{p_j,\Theta_j+2-p_j}(G))\cap  L_p(\Omega,\rF_0;H^{1-2/p_j}_{p_j,d+2-p_j}(G))
$$
for both $j=1$ and $j=2$.    
Then $u$ belongs to $\bH^1_{p_2,\Theta_2-p_2}(G,T)$. 
    
\end{lemma}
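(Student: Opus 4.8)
\textbf{Proof proposal for Lemma \ref{lem for uniqueness2}.}

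The plan is to reduce the statement to the non-random case and then to a uniqueness assertion for a deterministic parabolic equation on the $C^1$-domain $G$, exactly mirroring the strategy already used for Lemma \ref{global.uniqueness}. First I would freeze the operator: fix $\omega_0\in\Omega$, set $L_0(t)=\cL(\omega_0,t)$, and consider the deterministic operator $L_0$. Since $G$ is a bounded $C^1$-domain and $\Theta_j\in(d-1,d-1+p_j)$, the $L_p$-theory of SPDEs on $C^1$-domains (\cite{Kim2004-2, Kim2004}, in the form used for \eqref{equation for m=0} in the proof of Lemma \ref{a priori p}) provides, for the equation driven by $L_0$ with the given data $(f^0,\tbf,g,u_0)$, a solution $v\in\bH^1_{p_1,\Theta_1-p_1}(G,T)$; applying the same theory with $(p_2,\Theta_2)$ and using the hypotheses for $j=2$, the \emph{same} function $v$ (by uniqueness in $\bH^1_{p_1,\Theta_1-p_1}(G,T)\cap\bL_{p_1,d}(G,T)$, say) also belongs to $\bH^1_{p_2,\Theta_2-p_2}(G,T)$. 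Here I would use that both regularity classes are controlled by data that lie in the appropriate intersection spaces, so the two constructions agree; this is the $C^1$-analogue of \eqref{eqn 8.31.5}.

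Next I would set $\bar u_1:=u-v$, which satisfies, in the sense of distributions on $G$,
\begin{equation}
d\bar u = \Big[\cL\bar u+\sum_{i=1}^d\Big(\sum_{j=1}^d\big[a^{ij}(\omega_0,t)-a^{ij}(\omega,t)\big]v_{x^j}\Big)_{x^i}\Big]\,dt,\quad t\in(0,T],\nonumber
\end{equation}
with zero initial condition. The free term here is the divergence of an $\bR^d$-valued function whose components are bounded multiples of $v_{x^j}$; since $v\in\bH^1_{p_2,\Theta_2-p_2}(G,T)$ and $a^{ij}(\omega_0,t)-a^{ij}(\omega,t)$ is bounded (uniform parabolicity together with $a^{ij}=a^{ji}$), this free term is admissible data for the $C^1$-domain theory with parameters $(p_2,\Theta_2)$. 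Hence equation above has a solution $\bar u_2\in\bH^1_{p_2,\Theta_2-p_2}(G,T)$ with zero initial data. Now for almost every fixed $\omega$, both $\bar u_1(\omega,\cdot,\cdot)$ and $\bar u_2(\omega,\cdot,\cdot)$ solve the \emph{same} deterministic parabolic equation (the stochastic term has disappeared) with zero initial value, one in the $(p_1,\Theta_1)$-class and one in the $(p_2,\Theta_2)$-class; the deterministic uniqueness result on $C^1$-domains (the analogue for $C^1$-domains of \cite[Theorem 2.12]{ConicPDE}, obtainable e.g.\ from the deterministic part of \cite{Kim2004,Kim2004-2}) gives $\bar u_1(\omega,\cdot,\cdot)=\bar u_2(\omega,\cdot,\cdot)$ for a.e.\ $\omega$. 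Therefore $u=v+\bar u_1=v+\bar u_2\in\bH^1_{p_2,\Theta_2-p_2}(G,T)$, which is the assertion.

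The main obstacle I anticipate is the bookkeeping around \emph{which} mixed-integrability class one uses to guarantee that the $(p_1,\Theta_1)$-solution and the $(p_2,\Theta_2)$-solution produced by the $C^1$-domain theory genuinely coincide: one must pin down a common class (this is exactly why the hypotheses require the data to lie in the intersection with the $\Theta=d$ spaces $\bL_{p_j,d}(G,T)$ etc., so that both solutions live in a space where a single uniqueness theorem applies) and verify that the deterministic equation for $\bar u$ has free data simultaneously in both $(p_2,\Theta_2)$ and the reference class. Once the $C^1$-domain SPDE existence/uniqueness theorem of \cite{Kim2004-2,Kim2004} and its deterministic counterpart are invoked with these parameters, the rest is the same freeze-the-coefficient, subtract, and apply pathwise deterministic uniqueness argument as in the proof of Lemma \ref{global.uniqueness}, and no new computation is needed.
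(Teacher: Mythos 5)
The paper's own proof of this lemma is a one-line citation: it invokes \cite[Lemma 3.8]{CKL 2019+} and merely remarks that the argument there rests only on the $C^1$-domain SPDE theory of \cite{Kim2004-2,Kim2004}, which already allows coefficients that are measurable in $(\omega,t)$; in particular no freezing of the operator is needed on a $C^1$ domain. Your first structural choice --- freezing $\cL$ at $\omega_0$ and then running the subtract-and-compare scheme of Lemma \ref{global.uniqueness} --- is therefore extra machinery that the paper deliberately avoids, and it imports a dependence on a deterministic/frozen-coefficient version of the very statement you are proving.

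That is where the genuine gap sits. The entire content of the lemma is cross-class coincidence: a solution produced in the $(p_1,\Theta_1)$-class and one produced in the $(p_2,\Theta_2)$-class must be the same function. In your proposal this is asserted twice --- once for $v$ (``by uniqueness in $\bH^1_{p_1,\Theta_1-p_1}(G,T)\cap\bL_{p_1,d}(G,T)$, say'') and once for $\bar u_1=\bar u_2$ (``the deterministic uniqueness result on $C^1$-domains gives\dots'') --- but in neither place is it proved. The uniqueness theorems of \cite{Kim2004,Kim2004-2} and the deterministic analogue of \cite[Theorem 2.12]{ConicPDE} are uniqueness statements \emph{within} a fixed weighted class; they do not by themselves identify a $(p_1,\Theta_1)$-solution with a $(p_2,\Theta_2)$-solution. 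In the conic-domain proof of Lemma \ref{global.uniqueness} this identification is supplied by an independent ingredient, namely the explicit Green's function representation $\cR(u_0,f^0,\tbf,g)$ (Remark \ref{sol.representation}), which is manifestly the same function in every admissible class. No such representation is set up for $C^1$ domains in this paper, so your argument has no non-circular input at the decisive step. Relatedly, you correctly observe that the intersection hypotheses with the $\Theta=d$ spaces must be the mechanism (they let one pass through the classical unweighted $L_2$/$L_p$ theory on the bounded domain $G$ and use it as a common reference class, which is what \cite[Lemma 3.8]{CKL 2019+} does), but your proof never actually uses these hypotheses: the coincidence is simply declared. To repair the argument you would either have to carry out the approximation-through-the-$\Theta=d$-class argument explicitly, or do what the paper does and cite \cite[Lemma 3.8]{CKL 2019+} together with the observation that its proof extends verbatim to $(\omega,t)$-measurable coefficients.
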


\begin{proof}
See \cite[Lemma 3.8]{CKL 2019+}. We remark that  only $\Delta$ is considered in \cite{CKL 2019+}, however the proof of   \cite[Lemma 3.8]{CKL 2019+} works for general case without any changes since the proof depends only on   \cite[Theorem 2.7]{Kim2004-2} (or \cite[Theorem 2.9]{Kim2004}), 
which involves operators having coefficients measurable in $(\omega,t)$ and continuous in $x$.
\end{proof}

We recall $d=2$ in this section.
\begin{proof}[\textbf{Proof of Theorem  \ref{main result polygon}}]

Due to Lemma \ref{a priori p}, we only need to prove the existence result. Furthermore, relying on standard approximation argument, we may assume
$$
f^0\in\bK^{\infty}_c(\cO,T),\quad \tbf\in\bK^{\infty}_c(\cO,T,2), \quad  g\in\bK^{\infty}_c(\cO,T,\ell_2)\quad u_0\in \bK^{\infty}_c(\cO).
$$
Considering $u-u_0$ as usual, we may assume $u_0\equiv 0$.
Also, note that $g^k=0$ for all large $k$ (say, for all $k> N$), and each $g^k$ is of the type $\sum_{j=1}^{n(k)} 1_{(\tau^k_{j-1},\tau^k_j]}(t) h^{kj}(x)$, where $\tau^k_j$ are bounded stopping times and $h^{jk}\in \cC^{\infty}_c(\cO)$. 
Thus the function $v$ defined by
$$
v(t,x):=\sum_{k=1}^{\infty}\int^t_0 g^k dw^k_s=\sum_{k\leq N} \sum_{j\leq n(k)} \left(w^k_{\tau^k_j \wedge t}-w^k_{\tau^k_{j-1} \wedge t}\right) h^{kj}(x)
$$
is  infinitely differentiable in $x$ and vanishes near the boundary of $\cO$. Consequently $v$ belongs to $\cK^{\nu+2}_{p,\theta,\Theta}(\cO,T)$ for any $\nu, \theta, \Theta \in \bR$ as we consult with Definition \ref{definition solution polygon}.  Now, $u$ satisfies equation \eqref{stochastic parabolic equation polygon} if and only if $\bar{u}:=u-v$ satisfies
$$
d\bar{u}=\Big(\cL\bar{u}+\bar{f}^0+\sum_{i=1}^2 f^i_{x^i}\Big)dt, \quad t\in(0,T]\quad;\quad \bar{u}(0,\cdot)= 0,
$$
where $\bar{f}^0=f^0+\cL v$.  Hence, considering $\bar{f}^0$ in place of $f^0$, to prove the existence we may further assume $g=0$.

Then, by the  classical results without weights  for $p=2$,  (see, e.g.  \cite{Roz1990} or \cite[Theorem~2.12, Corollary~2.14]{Kim2014}),
there exists a  solution $u$ in $\cK^1_{2,2,2}(\cO,T)$  to equation \eqref{stochastic parabolic equation polygon}, which now is simplified as
$$
u_t=\cL u+f^0+\sum_{i=1}^2 f^i_{x^i}, \quad t\in(0,T]\quad; \quad  u(0,\cdot)=0.
$$
By Theorem A in \cite{Aronson} (or see  estimate (2.11) and proof of Theorem 2.4 in  \cite{Kim2004-3} for more detail), for any $r>4$, we have
\begin{equation}
 \label{bound}
\bE \sup_{t,x} \vert u(t,x)\vert ^p \leq C \bE \|\,\vert f^0\vert +\vert \tbf\vert \,\|^p_{L_r((0,T]\times \cO))} <\infty.
\end{equation}

Now we prove $u\in \cK^1_{p,\theta,\Theta}(\cO,T)$ using Lemma \ref{global.uniqueness} and  Lemma \ref{lem for uniqueness2} along with $u\in \cK^1_{2,2,2}(\cO,T)$.   Define $u_m:=\xi_m u$ in the same way we did in  the proof of Lemma \ref{a priori p}. Then $\xi_m u$ satisfies \eqref{equation for m} in the sense of distributions on  $\cD_m$ for $m=1,\ldots,M$ and $\xi_0u$ satisfies \eqref{equation for m=0}  
 on $G$ for $m=0$ with the same $f^0_m,\,f^i_m,\, \xi_mu_0$ as in \eqref{f g for m}. 
Note that since $f^0,\tbf$ are bounded and $f^0,\tbf, (\xi_m)_x, (\xi_m)_{xx}$ vanish near vertices,  we have for any $\theta\in \bR$, $q\geq 2$ and $1<\Theta<1+q$, 
\begin{eqnarray*}
&&\|f^0_m\|^q_{\bL_{q,\theta+q,\Theta+q}(\cO,T)} +\sum_{i=1}^2\|f^i_m\|^q_{\bL_{q,\theta,\Theta}(\cO,T)}\\
 &\leq& C \bE \int^T_0 \int_{\cO}(1+|u|^q) \rho^{\Theta-2}dx\leq C \left(\int_{\cO}\rho^{\Theta-2}dx\right) \bE \sup_{t,x}(1+|u|^q)<\infty.
\end{eqnarray*}
For the last inequality we used \eqref{bound} and the fact $\Theta-2>-1$. 
Hence, $f^0_m, \,f^i_m$ along with $\xi_mu_0$ satisfy assumptions in Lemma \ref{global.uniqueness} and  Lemma \ref{lem for uniqueness2}. Consequently  $\xi_m u \in \bK^1_{p,\theta-p,\Theta-p}(\cD_m,T)$ as $\xi_m u \in \cK^1_{p,\theta,\Theta}(\cD_m,T)$ for $m=1,2,\cdots, M$ and $\xi_0 u \in \bH^1_{p,\Theta-p}(G,T)$.  These and  \eqref{partition-of-unity.eq}  with $n=1$ yield $u\in \bK^1_{p,\theta-p,\Theta-p}(\cO,T)$ and in turn $u\in \cK^1_{p,\theta,\Theta}(\cO,T)$.  

Finally, the analogy of of Lemma \ref{regularity.induction} in case of polygonal domains (see Theorem \ref{thm all}) proves that  the solution $u$ found above actually belongs to the space $u\in \cH^{\gamma+2}_{p,\theta,\Theta}(\cO,T)$.  The theorem is proved.  \end{proof}

\providecommand{\bysame}{\leavevmode\hbox to3em{\hrulefill}\thinspace}
\providecommand{\MR}{\relax\ifhmode\unskip\space\fi MR }
\providecommand{\MRhref}[2]{%
  \href{http://www.ams.org/mathscinet-getitem?mr=#1}{#2}
}
\providecommand{\href}[2]{#2}

\end{document}